\documentclass[a4paper,10pt]{article}

\usepackage[ascii]{inputenc}
\usepackage{amsmath,amsfonts,amssymb,amsthm}
\usepackage{mathrsfs}
\usepackage{comment}
\usepackage{hyperref}
\usepackage{enumerate}


\usepackage{graphicx}

\usepackage{xcolor}

\newtheoremstyle{theorem}{5pt}{5pt}{\itshape}{}{\bfseries}{.}{.5em}{}

\theoremstyle{theorem}
\newtheorem{theorem}{Theorem}
\newtheorem{lemma}[theorem]{Lemma}
\newtheorem{corollary}[theorem]{Corollary}
\newtheorem{proposition}[theorem]{Proposition}
\newtheorem{definition}[theorem]{Definition}

\usepackage{titlesec}
\titleformat{\section}
{\Large\bfseries}{\thesection}{.7em}{\large}
\titlespacing*{\section}{0pt}{3.5ex plus 1ex minus .2ex}{2.3ex plus .2ex}
\titleformat{\subsection}
{\normalfont\bfseries}{\thesubsection}{.5em}{\normalsize}
\titlespacing*{\subsection}{0pt}{3.5ex plus 1ex minus .2ex}{2.3ex plus .2ex}

\DeclareMathOperator{\supp}{supp}
\newcommand{\abs}[1]{\left\lvert #1 \right\rvert}
\newcommand{\norm}[1]{\left\lVert #1 \right\rVert}
\newcommand{\R}{\mathbb{R}}
\newcommand{\C}{\mathbb{C}}

\newcommand{\p}{\partial}


\allowdisplaybreaks[4]

\begin{document}

\title{Non-Scattering Energies and\\ Transmission Eigenvalues in $\mathbb H^n$}
\author{Emilia Bl{\aa}sten\footnote{Department of Mathematics and Statistics, University of Helsinki, P.O. Box 64 (Pietari Kalmin Katu 5), 00014 Helsingin Yliopisto, FINLAND}{ } and
Esa V\!. Vesalainen\footnote{Mathematics and Statistics, {\AA}bo Akademi University, Domkyrkotorget 1, 20500 {\AA}bo, FINLAND}}
\date{}
\maketitle

\vspace*{-.5cm}
\begin{abstract}
We consider non-scattering energies and transmission eigenvalues of compactly supported potentials in the hyperbolic spaces $\mathbb H^n$. We prove that in $\mathbb H^2$ a corner bounded by two hyperbolic lines intersecting at an angle smaller than $180^\circ$ always scatters, and that one of the lines may be replaced by a horocycle. In higher dimensions, we obtain similar results for corners bounded by hyperbolic hyperplanes intersecting each other pairwise orthogonally, and that one of the hyperplanes may be replaced by a horosphere. The corner scattering results are contrasted by proving discreteness and existence results for the related transmission eigenvalue problems.

\medskip
\noindent{\bf Keywords}: hyperbolic geometry; interior transmission problem; corner; non-scattering

\medskip
\noindent{\bf Mathematics Subject Classification (2010)}: 35P25, 58J50, 35R30, 51M10, 58J05
\end{abstract}

\section{Introduction and main results}

We start with two subsections on scattering theory to fix notation and make relevant definitions as well as contrast the hyperbolic setting with the Euclidean. 
The second Subsection~\ref{HnScattering} contains our five main theorems starting from page \pageref{2Dthm}. We prove the corner scattering theorems in Sections \ref{CGO} and \ref{cornerScat}, and the interior transmission eigenvalue theorems in Section~\ref{ITE}.

\subsection{Background: the theory in $\mathbb R^n$} \label{RnScattering}

\paragraph{Scattering.} Let us start by briefly describing some simple short-range scattering theory in $\mathbb R^n$ at a fixed energy $\lambda\in\mathbb R_+$. Standard references on the topic are Chapter XIV in \cite{HormanderII} and the book \cite{Colton--Kress}. Thus, we are concerned with an incident wave $w$ solving
\[\left(-\Delta-\lambda\right)w=0\]
in $\mathbb R^n$, which is scattered by a short-range potential $V$, say a compactly supported $L^\infty$-function, thus creating a total wave $v$ solving the equation
\[\left(-\Delta+\lambda^\nu V-\lambda\right)v=0,\]
where $\nu\in\left\{0,1\right\}$,
again in $\mathbb R^n$. The value $\nu=0$ arises in quantum mechanics, and the resulting equation models e.g.\ electron and neutron beams, whereas the value $\nu=1$ arises from many classical models, in particular in acoustic and electromagnetic scattering. We call the case $\nu=0$ Schr\"odinger scattering and the case $\nu=1$ Helmholtz scattering. As it turns out, transmission eigenvalues behave differently for these two equations.

For definiteness, we are interested in incident waves which are Herglotz waves, i.e.\ waves of the form
\[w(x)=\int\limits_{S^{n-1}}g(\vartheta)\,e^{-i\sqrt\lambda\vartheta\cdot x}\,\mathrm d\vartheta,\]
where $g$ is an $L^2$-function on the unit sphere $S^{n-1}$. Equivalently, we may require $w$ to belong to the Agmon--H\"ormander space $B^\ast(\mathbb R^n)$, which consists of all functions $u\in L^2_{\mathrm{loc}}(\mathbb R^n)$ which satisfy
\[\sup_{R>1}\frac1R\int\limits_{B(0,R)}\left|u\right|^2<\infty,\]
where $B(0,R)\subset\mathbb R^n$ is the open ball of $\mathbb R^n$ of radius $R$ centered at $0$. Similarly, we will require $v\in B^\ast(\mathbb R^n)$. We recall here the classical fact that as $w$ solves an elliptic partial differential equation with real-analytic coefficients, $w$ must itself be real-analytic; see e.g.\ Section II.1.3 in \cite{Bers--John--Schechter}.

The waves $v$ and $w$ will of course be linked, the essential connection being that the scattered wave $u=v-w$ will have, in a sense, a special asymptotic behaviour at infinity as there exists an $L^2(S^{n-1})$-function $A$, depending on $w$ and $\lambda$, called the far-field pattern or the scattering amplitude, such that
\[u(x)=A\!\left(\frac x{\left|x\right|}\right)\frac{e^{i\sqrt\lambda x}}{\left|x\right|^{(n-1)/2}}+\mathrm{error},\]
as $\left|x\right|\longrightarrow\infty$. In terms of Agmon--H\"ormander spaces, this asymptotic expansion can be taken to mean that the difference of $u$ and the main term involving $A$ belongs to the space $\mathring B^\ast(\mathbb R^n)$, which consists of all functions $u_0\in B^\ast(\mathbb R^n)$ for which
\[\lim_{R\longrightarrow\infty}\frac1R\int\limits_{B(0,R)}\left|u_0\right|^2=0.\]

Now we have achieved a complete minimal picture of scattering theory. In typical applications, for instance, one creates incident waves $w$ (or rather, waves that would be $w$ if the wave motion took place in a uniform flat background), nature then solves the equation for $v$ with the perturbation $V$ present, and we measure the difference $u=v-w$ far away and thereby recover $A$, or in some cases the absolute value $\left|A\right|$. A typical applied inverse problem would then be to recover information about an unknown perturbation $V$ of the background from the measured far-field patterns $A$, or $\left|A\right|$. Indeed, this problem has been studied very intensively.

\paragraph{Non-scattering energies.} One natural question about the above setting is whether we can have $A\equiv0$ for some $w\not\equiv0$? If this happens, then we call $\lambda$ a non-scattering energy (or more precisely, a non-scattering energy for $V$). Essentially this means that for some special $w$ it happens that far away all the wave motion looks precisely as if the perturbation $V$ was not present.

The study of this rather special circumstance first arose from the study of linear sampling \cite{Colton--Kirsch} and factorization methods \cite{Kirsch1, Kirsch2}, all of which are methods for recovering qualitative information about $V$ from scattering measurements. Alas, these methods required that $\lambda$ is not a non-scattering energy. Thus, the question arose whether these energies are in some sense ``sparse'', say discrete or even empty?

\paragraph{Transmission eigenvalues.} The first stab at the problem was executed via transmission eigenvalues. In the above setting, if $V$ is indeed compactly supported, say for concreteness that $V$ vanishes outside some bounded non-empty open set $\Omega\subset\mathbb R^n$ which has a connected exterior, then Rellich's classical uniqueness theorem says that if indeed $A\equiv0$, then $v\equiv w$ far away, and by unique continuation, $v\equiv w$ outside $\Omega$. We thus obtain, by restricting $v$ and $w$ to $\Omega$, a non-trivial solution to the boundary value problem
\[\begin{cases}
\left(-\Delta+\lambda^\nu V-\lambda\right)v=0&\text{in $\Omega$},\\
\left(-\Delta-\lambda\right)w=0&\text{in $\Omega$},\\
v-w\in H^2_0(\Omega).
\end{cases}\]
Here $H^2_0(\Omega)$ is the closure of $C_{\mathrm c}^\infty(\Omega)$ in the usual Sobolev norm $\left\|\cdot\right\|_{H^2(\Omega)}$, and for smooth domains $\Omega$ the condition $v-w\in H^2_0(\Omega)$ reduces simply to having both $v\equiv w$ and $\partial_\nu v\equiv\partial_\nu w$ on $\partial\Omega$.

The values of $\lambda$ for which the above problem has a non-trivial $L^2$-solution are called (interior) transmission eigenvalues (for $\Omega$ and $V$). Thus, a non-scattering energy for $V$ (when $V$ is compactly supported) is always a transmission eigenvalue for $V$ and any suitable domain $\Omega$ outside of which $V$ vanishes. 

Transmission eigenvalues first appeared in the works \cite{Kirsch, Colton--Monk}. The most basic results about them is discreteness, obtained soon after \cite{Colton--Kirsch--Paivarinta}. Existence was obtained for fairly general potentials in \cite{Paivarinta--Sylvester}, and soon after the existence of infinitely many transmission eigenvalues in the Helmholtz case was shown \cite{Cakoni--Gintides--Haddar}. After that, transmission eigenvalues have been intensively studied, partly because they offer a new avenue for deriving information about a scatterer \cite{Cakoni--Colton--Monk, McLaughlin--Polyakov, McLaughlin--Polyakov--Sacks}. For more information and references on transmission eigenvalues, we recommend \cite{Cakoni--Haddar1, Cakoni--Haddar2, Colton--Paivarinta--Sylvester}.

\paragraph{Corner scattering.} For many years, it was unclear what exactly is the relationship between non-scattering energies and transmission eigenvalues. For reasonable radial potentials the notions were equivalent, for other potentials the situation was unclear. The issue was resolved in \cite{Blasten--Paivarinta--Sylvester}, where a large class of potentials with sharp rectangular corners were shown to not have any non-scattering energies, even though for many of those potentials one has an infinite discrete set of Helmholtz transmission eigenvalues. Further corner scattering results have been obtained in \cite{Elschner--Hu, Elschner--Hu2, Hu--Salo--Vesalainen, Paivarinta--Salo--Vesalainen, Xiao--Liu}. These techniques have also contributed back to the interior transmission problem recently, namely there is now insight into the distribution of energy of transmission eigenfunctions in a domain \cite{Blasten--Liu2017, Blasten--Li--Liu--Wang}. Also worth mentioning is the fact that these techniques also contribute to the inverse problem of determining information about a polyhedral medium scatterer from a single far-field measurement \cite{Blasten--Liu2017b, Elschner--Hu, Elschner--Hu2, Hu--Salo--Vesalainen}.

\subsection{Scattering in $\mathbb H^n$} \label{HnScattering}

\paragraph{Hyperbolic geometry.} Our main goal is to extend the above theory to the direction of the $n$-dimensional hyperbolic spaces $\mathbb H^n$ of constant curvature $-1$. For a general reference on hyperbolic scattering, we recommend \cite{Isozaki--Kurylev}. One concrete model for this space is the upper half-space model, in which we set $\mathbb H^n$ to be $\mathbb R^{n-1}\times\mathbb R_+$ with coordinates $\left\langle x',x_n\right\rangle=\left\langle x_1,x_2,\ldots,x_{n-1},x_n\right\rangle$. Near such a point the infinitesimal line element is divided by $x_n$. Among other things, this implies that the natural measure in $\mathbb H^n$ is $\mathrm d\mu=\mathrm dx_1\cdots\mathrm dx_{n-1}\,\mathrm dx_n/x_n^n$, where $\mathrm dx_1$, \dots, $\mathrm dx_{n-1}$, $\mathrm dx_n$ are Lebesgue measures in $\mathbb R$, \dots, $\mathbb R$, $\mathbb R_+$, respectively. The geometry of $\mathbb H^n$ gives in a natural fashion rise to a Laplace--Beltrami operator. By shifting this operator so that its spectrum coincides with $\left[0,\infty\right[$, we arrive at the free Schr\"odinger operator
\begin{equation}\label{H0def}
H_0=-x_n^2\left(\frac{\partial^2}{\partial x_1^2}+\ldots+\frac{\partial^2}{\partial x_{n-1}^2}+\frac{\partial^2}{\partial x_n^2}\right)+\left(n-2\right)x_n\frac\partial{\partial x_n}-\frac{(n-1)^2}4,
\end{equation}
which will replace the Euclidean $-\Delta$.

\paragraph{Scattering.} An incident wave at a given energy $\lambda\in\mathbb R_+$ would now be a solution $w$ to the equation
\[\left(H_0-\lambda\right)w=0\]
in $\mathbb H^n$. The natural hyperbolic analogue of the Agmon--H\"ormander space $B^\ast(\mathbb R^n)$ is given by the space $B^\ast(\mathbb H^n)$, which consists of all functions $u\in L^2_{\mathrm{loc}}(\mathbb H^n;\mathrm d\mu)$ such that
\[\sup_{R>e}\frac1{\log R}
\int\limits_{1/R}^R\int\limits_{\mathbb R^{n-1}}\bigl|u(x',x_n)\bigr|^2\mathrm dx'\,\frac{\mathrm dx_n}{x_n^n}<\infty.\]
Such solutions will in fact be exactly the solutions of the form
\[w=\mathscr F_0^\pm(\sqrt\lambda)^\ast\varphi\]
for some $\varphi\in L^2(\mathbb R^{n-1})$, where $\mathscr F_0^\pm(\sqrt\lambda)^\ast$ is the Poisson operator given by
\begin{multline*}
\bigl(\mathscr F_0^\pm(\sqrt\lambda)^\ast\varphi\bigr)(x',x_n)\\
=\frac{\sqrt{2\sqrt\lambda\sinh\pi\sqrt\lambda}}\pi
\mathscr F_0^\ast\!\left(\left(\frac{\left|\xi'\right|}2\right)^{\!\pm i\sqrt\lambda}x_n^{(n-1)/2}\,K_{i\sqrt\lambda}(\left|\xi'\right|x_n)\,\widehat\varphi(\xi')\right),
\end{multline*}
where $\widehat\varphi$ is the usual Euclidean Fourier transform and $\mathscr F_0^\ast$ is the usual Euclidean inverse Fourier transform in $L^2(\mathbb R^{n-1})$, which takes functions of $x'\in\mathbb R^{n-1}$ into functions of $\xi'\in\mathbb R^{n-1}$. Thus, $B^\ast(\mathbb H^n)$-solutions $w$ to the free Schr\"odinger equation are natural analogues of the Herglotz waves in $\mathbb R^n$. Again $w$ must be real-analytic as the solution of an elliptic equation with real-analytic coefficients \cite[Sect. II.1.3]{Bers--John--Schechter}.

As before, we model the perturbation of the flat uniform hyperbolic background by some potential function $V\in L^\infty_{\mathrm c}(\mathbb H^n)$, and the incident wave $w$ gives rise to a total wave $v\in B^\ast(\mathbb H^n)$ solving the perturbed equation
\[\left(H_0+\lambda^\nu V-\lambda\right)v=0\]
in $\mathbb H^n$, where $\nu=0$ or $\nu=1$, and we speak of Schr\"odinger or Helmholtz scattering, accordingly. A more general way of modeling perturbations is to have a different topology and metric in a subdomain in addition to a potential. Transmission eigenvalues were considered in this setting in \cite{Shoji, Morioka--Shoji} recently. We shall keep the geometry intact and only add a potential function.

Again, the solutions $v$ and $w$ will be linked by their asymptotic behaviour. This time the scattered wave $u=v-w$ has an asymptotic expansion of the form
\[u(x)=A(x')\,x_n^{(n-1)/2-i\sqrt\lambda}+\text{error}\]
as $x_n\longrightarrow0+$ for some $A\in L^2(\mathbb R^{n-1})$ which again deserves to be called the far-field pattern or the scattering amplitude.
More precisely, this holds in the sense that
\[\lim_{R\longrightarrow\infty}\frac1{\log R}\int\limits_{1/R}^1\int\limits_{\mathbb R^{n-1}}\left|u(x',x_n)-A(x')\,x_n^{(n-1)/2-i\sqrt\lambda}\right|^2\mathrm dx'\,\frac{\mathrm dx_n}{x_n^n}=0,\]
and
\[\lim_{R\longrightarrow\infty}\frac1{\log R}\int\limits_{1}^R\int\limits_{\mathbb R^{n-1}}\left|u(x',x_n)\right|^2\mathrm dx'\,\frac{\mathrm dx_n}{x_n^n}=0.\]

\paragraph{Non-scattering energies.} We may ask whether it is possible that $A\equiv0$ for some $w\not\equiv0$? If affirmative, we again call $\lambda$ a non-scattering energy for $V$. It turns out that this is equivalent with the scattered wave $u=v-w$ belonging to the space $\mathring B^\ast(\mathbb H^n)$ which consists of those functions $u\in B^\ast(\mathbb H^n)$ for which
\[\lim_{R\longrightarrow\infty}\frac1{\log R}\int\limits_{1/R}^R\int\limits_{\mathbb R^{n-1}}\left|u(x',x_n)\right|^2\mathrm dx'\,\frac{\mathrm dx_n}{x_n^n}=0.\]

\begin{definition}
\label{nonScatteringEnergyHn}
A number $\lambda\in\mathbb R_+$ is called a non-scattering energy for the potential $V\in L^\infty_{\mathrm c}(\mathbb H^n)$ if there exist functions $v\not\equiv0$ and $w\not\equiv0$ belonging to $B^\ast(\mathbb H^n)$ and solving
\[\left(H_0+\lambda^\nu V-\lambda\right)v=0\quad\text{and}\quad\left(H_0-\lambda\right)w=0\]
in $\mathbb H^n$, and for which $v-w\in\mathring B^\ast(\mathbb H^n)$.
\end{definition}

Our main theorems about these objects will include analogues of previous Euclidean corner scattering results, but we can also prove similar results for more exotic hyperbolic corners.
\begin{definition} \label{coneDef}
  An open set $\mathcal C \subset \mathbb H^n$ is called a cone if
  there is a vertex $x_0\in\partial\mathcal C$ such that for any
  $x\in\mathcal C$, the open ray from $x_0$ to $x$ belongs to
  $\mathcal C$. The cone $\mathcal C$ is admissible if
  \begin{enumerate}[{case} i)]
    \item $n=2$ and it is delimited by two non-parallel rays starting from
      $x_0$, and $\mathcal C$ is on the side where the opening angle
      is less than $\pi$, or\label{2dCorner}
    \item $n\geqslant2$ and it is delimited by $n$ hyperbolic
      hyperplanes all of which intersect pairwise at an angle of
      $\pi/2$ at their common vertex.\label{ndCorner}
  \end{enumerate}
\end{definition}

\begin{theorem}\label{2Dthm}
Let $\Omega \subset \mathbb H^2$ be a non-empty bounded open set such that the interior of $\mathbb H^2 \setminus \Omega$ is connected. Assume that there is a nonempty open set $B$ such that $B \cap \Omega = B \cap \mathcal C$ for some open hyperbolic cone $\mathcal C$ as in Definition~\ref{coneDef}.

Let $V \in L^\infty(\mathbb H^2)$ vanish outside of $\Omega$ and assume that there is $\varphi \in C^\alpha(\mathbb H^2)$, $\alpha\in\mathbb R_+$, of compact support such that $V = \chi_{\mathcal C} \varphi$ in $B$. If $\varphi(x_0) \neq 0$, then the potential $V$ has no non-scattering energies.
\end{theorem}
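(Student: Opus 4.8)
The plan is to adapt the Euclidean corner scattering strategy of Bl\r{a}sten--P\"aiv\"arinta--Sylvester to the hyperbolic setting by testing the transmission equations against suitable complex geometrical optics (CGO) solutions concentrated at the vertex. Suppose, for contradiction, that $\lambda$ is a non-scattering energy. By Rellich's theorem in $\mathbb H^n$ (together with unique continuation for $H_0-\lambda$, which applies since the coefficients are real-analytic), the scattered wave $u=v-w$ vanishes outside $\Omega$, so that $v\equiv w$ on $\mathbb H^2\setminus\Omega$. Shrinking the ball $B$, we may assume $x_0\in B$, that $B\cap\Omega=B\cap\mathcal C$, and that $V=\chi_{\mathcal C}\varphi$ on $B$. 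Restricting the equations to $B$ and subtracting, $u=v-w$ satisfies $(H_0-\lambda)u=-\lambda^\nu\chi_{\mathcal C}\varphi\,v$ in $B$, with $u$ supported in $\overline{B\cap\mathcal C}$ and $u\in H^2_{\mathrm{loc}}$ vanishing to second order on $\partial\mathcal C\cap B$. The heart of the argument is the integral identity obtained by pairing this equation with a CGO solution $u_\tau$ of the adjoint free equation $(H_0-\lambda)u_\tau=0$: integrating by parts over $B\cap\mathcal C$, all boundary terms on $\partial\mathcal C$ vanish because $u$ and $\partial_\nu u$ do, leaving $\lambda^\nu\int_{\mathcal C\cap B}\varphi\,v\,u_\tau\,\mathrm d\mu$ equal to a boundary term living only on $\partial B$, which is controlled by the decay of $u_\tau$ away from $x_0$.

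Next I would use the CGO solutions $u_\tau$ whose construction I expect to be carried out in the section referenced as \ref{CGO}: these are solutions of $(H_0-\lambda)u_\tau=0$ of the form $u_\tau\approx e^{\tau\rho}$ (in suitable coordinates centered at $x_0$, with $\rho$ a complex-linear phase adapted to the cone) that decay exponentially in $\tau$ away from the vertex and have an explicit leading asymptotic near $x_0$. The standard dichotomy then applies: as $\tau\to\infty$, the boundary term on $\partial B$ decays exponentially (since $x_0\notin\partial B$, or more precisely since the phase is strictly smaller there), while the left-hand side $\lambda^\nu\int_{\mathcal C\cap B}\varphi\,v\,u_\tau\,\mathrm d\mu$ is dominated, after rescaling, by $\varphi(x_0)\,v(x_0)$ times a nonzero ``corner constant'' $\int_{\mathcal C_0}e^{\tau\rho}\,\mathrm d(\text{flat measure})$, where $\mathcal C_0$ is the tangent cone at $x_0$. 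The assumed H\"older continuity $\varphi\in C^\alpha$ controls the error $\varphi(x)-\varphi(x_0)$, and the real-analyticity of $v$ (hence its continuity and a Taylor expansion at $x_0$) controls $v(x)-v(x_0)$; the hyperbolic volume element $\mathrm d\mu=\mathrm dx/x_n^n$ is smooth and bounded near $x_0$ since $x_0$ has $x_n$-coordinate bounded away from $0$, so it only contributes a harmless smooth weight that can be absorbed into the leading constant.

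Carrying this out gives, in the limit, $\varphi(x_0)\,v(x_0)=0$. Since $\varphi(x_0)\neq0$ by hypothesis, this forces $v(x_0)=0$. To upgrade this to a contradiction with $w\not\equiv0$ (equivalently $v\not\equiv0$), I would run the same argument against a hierarchy of CGO solutions detecting successively higher Taylor coefficients of $v$ at $x_0$ --- i.e.\ choose phases $\rho$ and amplitude corrections so that the rescaled integral picks out $\p^\beta v(x_0)$ for each multi-index $\beta$ --- concluding inductively that all derivatives of $v$ vanish at $x_0$; then real-analyticity of $v$ and connectedness of $\mathbb H^2$ give $v\equiv0$, hence $w\equiv0$, the desired contradiction. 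The case \ref{ndCorner} of orthogonal hyperplanes in $\mathbb H^n$ is handled identically once the tangent cone $\mathcal C_0$ is the flat orthant, for which the corner constant is again manifestly nonzero; the only place the two-hyperplane/$n$-hyperplane distinction matters is in verifying non-vanishing of $\int_{\mathcal C_0}e^{\tau\rho}$.

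The main obstacle I anticipate is the CGO construction in the hyperbolic geometry: unlike $\mathbb R^n$, the operator $H_0$ has non-constant coefficients and the exponentials $e^{\tau\rho}$ are not exact solutions, so one must solve a remainder equation $(H_0-\lambda)r_\tau=-(H_0-\lambda)e^{\tau\rho}$ with $\tau$-uniform estimates (e.g.\ in weighted $L^2$ or Sobolev spaces on $B$) showing $r_\tau$ is lower order; and one must choose the phase $\rho$ so that $e^{\tau\rho}$ both decays on $\mathcal C\cap\partial B$ and has a controlled, nondegenerate profile on the tangent cone. Provided that construction (presumably the content of Section~\ref{CGO}) delivers CGO solutions with exponential decay away from $x_0$ and the right leading behaviour, the rest of the proof is the by-now-standard corner-scattering dichotomy together with bookkeeping of the smooth hyperbolic weight $x_n^{-n}$.
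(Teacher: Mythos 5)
There is a genuine gap, and it comes from which function your limiting identity probes. In your setup the CGO solution solves the \emph{free} equation and is paired against the \emph{total} wave $v$, so the rescaled limit detects the pointwise/Taylor data of $v$ at the vertex. But $v$ solves $(H_0+\lambda^{\nu}\chi_{\mathcal C}\varphi-\lambda)v=0$ with a discontinuous potential, so it is only of finite smoothness (elliptic regularity gives roughly $H^2$ or $C^{1,\beta}$ near $\partial\mathcal C$), it is \emph{not} real-analytic, arbitrary-order Taylor coefficients need not exist, and its leading homogeneous term is not known to be harmonic. This breaks your endgame twice: the ``hierarchy of CGOs detecting successively higher Taylor coefficients of $v$'' has no foundation, and even if all existing derivatives of $v$ vanished at $x_0$ you could not invoke real-analyticity of $v$ to conclude $v\equiv0$. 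Moreover, at any fixed vanishing order the argument would require non-vanishing of the Laplace transform over the cone of the leading polynomial of $v$, and the results you would need to cite (Section~5 of \cite{Paivarinta--Salo--Vesalainen} in 2D, Theorem~2.5 of \cite{Blasten--Paivarinta--Sylvester} for orthants) are stated for \emph{harmonic} polynomials; nothing in your arrangement guarantees harmonicity.

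The paper's proof reverses the roles precisely to avoid this: the CGO solutions are built for the \emph{perturbed} operator (Proposition~\ref{CGOinM}, after conjugating $H_K$ by $K^{(n-2)/2}$ in Poincar\'e-disc coordinates centred at $x_0$, in which the hyperbolic corner is \emph{exactly} a Euclidean cone, so no tangent-cone approximation error arises), and they are paired against the incident wave $w$ through the orthogonality relation of Lemma~\ref{orthogonalityRelationLemma} and Corollary~\ref{orthoInCoordsCorol}. Since $w$ solves the free equation with real-analytic coefficients, it is smooth, has finite vanishing order $N$ at $x_0$ by unique continuation, and its order-$N$ Taylor polynomial $P_N$ is harmonic (Lemma~\ref{harmonicSplitM}); the decay bookkeeping (Lemma~\ref{orig90DegCornerScatter}) then yields $\varphi(x_0)\int_{\mathcal C-x_0}e^{\rho_0\cdot x}P_N(x)\,\mathrm dx=0$ in a single step, and the known non-vanishing of this Laplace transform forces $\varphi(x_0)=0$, contradicting the hypothesis --- no iteration over Taylor coefficients and no analyticity of $v$ is ever needed. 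If you want to keep your pairing (free CGO against $v$), you would have to split $v=w+u$ and control $\int\varphi\,u\,u_\tau$ with no favourable decay available, which is exactly the difficulty the perturbed-equation CGOs are designed to bypass.
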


\begin{theorem}\label{3Dthm}
Let $n\geqslant 2$ be an integer and let $\Omega \subset \mathbb H^n$ be a non-empty bounded open set such that the interior of $\mathbb H^n \setminus \Omega$ is connected. Assume that there is a nonempty open set $B$ such that $B \cap \Omega = B \cap \mathcal C$ for some open hyperbolic cone $\mathcal C$ as in case~\ref{ndCorner}) of Definition~\ref{coneDef}.

Let $V \in L^\infty(\mathbb H^n)$ vanish outside of $\Omega$ and assume that there is $\varphi \colon \mathbb H^n \longrightarrow \C$ of compact support such that $V = \chi_{\mathcal C} \varphi$ in $B$. Moreover assume either $n=2$ with $\varphi\in C^\alpha(\mathbb H^2)$ and $\alpha\in\mathbb R_+$, or $n=3$ with $\varphi\in C^\alpha(\mathbb H^3)$ and $\alpha\in\left]1/4,\infty\right[$, or else $n\geqslant 3$ and $\varphi \in H^{s,r}(\mathbb H^n)$ with $r\in\left[ 1,\infty\right[$ and $s\in\left]n/r,\infty\right[$. If $\varphi(x_0) \neq 0$, then the potential  $V$ has no non-scattering energies.
\end{theorem}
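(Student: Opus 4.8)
The plan is to argue by contradiction, in the spirit of the Euclidean corner-scattering theorems \cite{Blasten--Paivarinta--Sylvester}, but with complex geometrical optics (CGO) solutions tailored to the hyperbolic free operator $H_0$; constructing these is the genuinely new ingredient (carried out in Section~\ref{CGO}). Suppose $\lambda\in\R_+$ were a non-scattering energy for $V$ and pick $v\not\equiv0$, $w\not\equiv0$ in $B^\ast(\mathbb H^n)$ as in Definition~\ref{nonScatteringEnergyHn}, so that $u:=v-w\in\mathring B^\ast(\mathbb H^n)$. First I would combine a Rellich-type uniqueness theorem in $\mathbb H^n$ with the weak unique continuation principle for $H_0+\lambda^\nu V-\lambda$ (valid since $V\in L^\infty$) and the connectedness of the interior of $\mathbb H^n\setminus\Omega$ to deduce $v\equiv w$ on $\mathbb H^n\setminus\overline\Omega$. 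Hence $u$ is supported in $\overline\Omega$ and lies in $H^2_{\mathrm{loc}}$ by elliptic regularity for $(H_0+\lambda^\nu V-\lambda)u=-\lambda^\nu Vw\in L^\infty_{\mathrm c}$; shrinking $B$ to a ball $B'$ about $x_0$, on $B'$ we have $\Omega\cap B'=\mathcal C\cap B'$, $u$ is supported in $\overline{\mathcal C}$ with vanishing Cauchy data on $\partial\mathcal C\cap B'$, and $(H_0-\lambda)u=-\lambda^\nu\varphi\,v$.

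Next I would take the hyperbolic CGO family $\{u_\tau\}_{\tau\geq\tau_0}$ of Section~\ref{CGO}: solutions of $(H_0-\lambda)u_\tau=0$ near $x_0$, depending on a complex direction that may be prescribed once the admissible cone $\mathcal C$ (whose $n$ bounding hyperbolic hyperplanes meet pairwise orthogonally at $x_0$, cf.\ case~\ref{ndCorner}) of Definition~\ref{coneDef}) is fixed, and with (i) exponential decay $\norm{u_\tau}_{C^1(\overline{\mathcal C}\cap(B'\setminus B(x_0,\varepsilon)))}\lesssim e^{-c\tau}$ for every $\varepsilon>0$; (ii) moment asymptotics $\int_{\mathcal C}(x-x_0)^\gamma u_\tau\,\mathrm d\mu=\tau^{-d-\abs\gamma}(\mu_\gamma+o(1))$ for a fixed exponent $d$, with coefficients $\mu_\gamma$ which do not vanish and which, as the complex direction is varied, are rich enough to test against every homogeneous polynomial — here the pairwise orthogonality of the hyperplanes is exactly what prevents degeneracy; and (iii) companion bounds $\int_{\mathcal C}\abs{x-x_0}^\beta\abs{u_\tau}\,\mathrm d\mu\lesssim\tau^{-d-\beta}$ for $\beta$ in the admissible range. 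Taking a cutoff $\psi\in C_{\mathrm c}^\infty(B')$ equal to $1$ near $x_0$ and applying Green's formula for the formally self-adjoint $H_0$ on $\mathcal C$ to the pair $(u,\psi u_\tau)$ — the boundary term on $\partial\mathcal C$ vanishes since the Cauchy data of $u$ do, and the commutator $[H_0,\psi]u_\tau$ is supported where $u_\tau$ is exponentially small — yields the orthogonality identity $\int_{\mathcal C}\varphi\,v\,\psi\,u_\tau\,\mathrm d\mu=\mathrm O(e^{-c\tau})$.

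Now I would extract information order by order. At leading order, writing $\varphi v=\varphi(x_0)v(x_0)+(\varphi v-\varphi(x_0)v(x_0))$, the main term contributes $\varphi(x_0)v(x_0)\,\tau^{-d}(\mu_0+o(1))$, while the remainder is $o(\tau^{-d})$ by (iii): $v$ is locally $C^{1,\sigma}$ for some $\sigma>0$ by elliptic regularity, and $\varphi$ has a modulus of continuity that is $\mathrm O(\abs{x-x_0}^{\gamma_0})$ for some $\gamma_0>0$ — one may take $\gamma_0=\alpha$ in the H\"older cases, and $\gamma_0$ coming from $H^{s,r}\hookrightarrow C^{0}$ when $s>n/r$ in the Sobolev case. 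Hence $\varphi(x_0)v(x_0)=0$, so $v(x_0)=0$ since $\varphi(x_0)\neq0$, and then $w(x_0)=0$ because $u(x_0)=0$. One then iterates: the coefficients of $H_0-\lambda$ are real-analytic, and because the bounding hyperplanes are pairwise orthogonal one may reflect $u$ across the faces (these reflections are isometries of $\mathbb H^n$), removing the corner, so from $\varphi v$ (hence the source $-\lambda^\nu\varphi v$) vanishing to successively higher order at $x_0$ one keeps an asymptotic Taylor expansion of $v$ at $x_0$ — the non-analytic part $u$ always staying two orders below the leading term of the analytic incident wave $w$, so the degree-$m$ Taylor part of $v$ is that of $w$ — and the orthogonality identity at order $\tau^{-d-m}$, tested along varying complex directions via the nonvanishing moments $\mu_\gamma$, forces all degree-$m$ derivatives of $w$ at $x_0$ to vanish. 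Inductively $w$ vanishes to infinite order at $x_0$; being real-analytic and not identically zero on the connected $\mathbb H^n$, this is impossible.

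Two steps carry the weight, and both are hyperbolic in nature. The first is the CGO construction: in $\R^n$ one uses the harmonic exponentials $e^{\rho\cdot x}$ with $\rho\cdot\rho=0$, but the factor $x_n^2$ and the first-order term in $H_0$ force a genuine parametrix construction — naturally built from the Macdonald-function kernels appearing in the Poisson operator $\mathscr F_0^\pm(\sqrt\lambda)^\ast$ with the frequency analytically continued — together with a quantitative analysis of the decay (i) and, above all, of the hyperbolic cone moments (ii)--(iii), including the verification that orthogonality of the hyperplanes keeps them nondegenerate. The second is the bookkeeping in the induction: the dimension-dependent regularity thresholds in the statement ($\alpha\in\R_+$ for $n=2$, $\alpha>1/4$ for $n=3$, and $s>n/r$ in general) are exactly what is needed so that at every order the remainder decays strictly faster than the main term $\tau^{-d-m}$, given the decay rate of the hyperbolic CGO moments in that dimension — so pinning down these exponents, and propagating them through the iteration, is where the analysis is most delicate.
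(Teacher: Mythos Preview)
Your outline diverges from the paper's proof in two structural ways, and the second creates a genuine gap.

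\textbf{The CGO construction.} The paper does \emph{not} build hyperbolic CGO solutions from Macdonald-function kernels with analytically continued frequency. Instead, the key observation (Lemma~\ref{conjugation-lemma}) is that conjugating $H_K$ by $K^{(n-2)/2}$ yields $-\Delta + Q_K$ for a potential $Q_K$; after choosing Poincar\'e ball coordinates centred at $x_0$ (so the hyperbolic right-angled cone becomes the Euclidean orthant $\left]0,\infty\right[^n$), this reduces the whole problem to the flat Euclidean Laplacian with a bounded potential. The CGO solutions are then simply $u_0=K^{(n-2)/2}e^{\rho\cdot x}(1+\psi)$ with the standard Euclidean remainder $\psi$ (Proposition~\ref{CGOinM}), and the moment analysis reduces verbatim to the Euclidean Laplace transform $\int_{\mathcal C}e^{\rho_0\cdot x}P_N(x)\,\mathrm dx$ already treated in \cite{Blasten--Paivarinta--Sylvester,Paivarinta--Salo--Vesalainen}. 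No iteration is performed: one shows in a single step that this Laplace transform of the leading harmonic homogeneous part $P_N$ of $w$ cannot vanish for all admissible $\rho_0$, forcing $\varphi(x_0)=0$.

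\textbf{The choice of test function and the iteration.} More seriously, your $u_\tau$ solve the \emph{free} equation $(H_0-\lambda)u_\tau=0$, so after integration by parts the integrand contains the total field $v$, not $w$. The paper deliberately takes $u_0$ to solve the \emph{perturbed} equation $(H_0+\lambda^\nu V-\lambda)u_0=0$, so that the orthogonality relation (Lemma~\ref{orthogonalityRelationLemma}) reads $\int V\,u_0\,w\,\mathrm d\mu=\text{(boundary)}$ with the real-analytic $w$ inside. This matters because $v$ is \emph{not} smooth at $x_0$ (the coefficient $V=\chi_{\mathcal C}\varphi$ jumps there), so it has no Taylor expansion to extract term by term. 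Your attempt to recover one via reflection across the faces does not work as stated: the reflections are isometries of $\mathbb H^n$ and commute with $H_0$, but the source term $-\lambda^\nu Vv$ in $(H_0-\lambda)u=-\lambda^\nu Vv$ is supported only in $\mathcal C$ and is not carried to a smooth function by reflection, so the reflected $u$ does not solve an elliptic equation with smooth right-hand side near $x_0$. Consequently the assertion that ``the non-analytic part $u$ always stays two orders below the leading term of $w$'' is unjustified, and the inductive step collapses. The paper's route---CGO for the perturbed operator, so that only the analytic $w$ needs a Taylor expansion---avoids this difficulty entirely.
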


In addition, the hyperbolic space $\mathbb H^n$ has natural objects called horospheres, also called horocycles in two dimensions. In the half-space model of $\mathbb H^n$, a horosphere means either a hyperplane parallel to the hyperplane $\mathbb R^{n-1}\times\left\{0\right\}$ at infinity, or a sphere tangent to $\mathbb R^{n-1}\times\left\{0\right\}$. In the ball model of $\mathbb H^n$, a horosphere means a sphere tangent to the sphere at infinity.

We can generalize theorem~\ref{3Dthm} to the case where one of the sides of the hyperbolic cone is a horocycle or horosphere. Simply choose $\Phi$ as the upper half-space coordinates of $\mathbb H^n$ shifted to $\R^{n-1}\times{]{-1,\infty}[}$.

\begin{theorem}\label{confThm}
Theorems \ref{2Dthm} and \ref{3Dthm} stay true if $\mathcal C$ is defined as follows instead: Assume that there is a conformal map $\Phi \colon B \longrightarrow \R^n$ such that $\Phi(x_0) = 0$ for some $x_0\in B$, and that $\mathcal C$ is the preimage under $\Phi$ of a Euclidean cone with opening angle less than $\pi$ and vertex $0$ in two dimensions, or $\mathcal C = \Phi^{-1}({]{0,\infty}[}^n)$ in higher dimensions.
\end{theorem}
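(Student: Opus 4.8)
The plan is to carry over the proof of Theorems~\ref{2Dthm} and~\ref{3Dthm} word for word, changing only the conformal chart used to flatten a neighbourhood of the vertex. Recall the structure of that argument: assuming $\lambda$ is a non-scattering energy, one first uses the Rellich uniqueness theorem in $\mathbb H^n$ together with unique continuation to deduce from $v-w\in\mathring B^\ast(\mathbb H^n)$ that $v\equiv w$ on $\mathbb H^n\setminus\Omega$, hence $v-w\in H^2_0(\Omega)$; this step uses only that $V$ is compactly supported and that the interior of $\mathbb H^n\setminus\Omega$ is connected, so it is untouched by the shape of $\mathcal C$. One then localises to the neighbourhood $B$, where $v-w\in H^2$, where $(H_0+\lambda^\nu V-\lambda)v=0$ and $(H_0-\lambda)w=0$, and where $V=\chi_{\mathcal C}\varphi$, and obtains a contradiction by transplanting this local data, via a conformal map sending $x_0$ to $0$, onto a Euclidean corner-scattering problem to which the known Euclidean results apply. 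In the proofs of Theorems~\ref{2Dthm} and~\ref{3Dthm} the relevant conformal map is the passage to the ball model of $\mathbb H^n$ centred at $x_0$, which straightens a hyperbolic cone into a flat one; here it is the given $\Phi$, and nothing else changes.

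Concretely, I would first record that $\Phi$, being conformal from $(B,g_{\mathbb H})$ to a Euclidean domain and $g_{\mathbb H}$ being conformal to the Euclidean metric in the half-space model, is conformal in the Euclidean sense as well, hence real-analytic with everywhere-nonsingular differential (Liouville's theorem when $n\geqslant3$, holomorphy or antiholomorphy when $n=2$); write $\Phi^\ast\delta=\sigma^2 g_{\mathbb H}$ with $\sigma$ a positive real-analytic function on $B$. Next I would transport the two equations. Writing $H_0=L_{g_{\mathbb H}}-\tfrac14$, where $L_{g_{\mathbb H}}$ is the conformal Laplacian of $\mathbb H^n$ (scalar curvature $-n(n-1)$), the conformal covariance of the conformal Laplacian shows that, for $n\geqslant3$, the functions $\tilde w:=(\sigma\circ\Phi^{-1})^{-(n-2)/2}\,(w\circ\Phi^{-1})$ and $\tilde v:=(\sigma\circ\Phi^{-1})^{-(n-2)/2}\,(v\circ\Phi^{-1})$ satisfy Euclidean Schr\"odinger equations $(-\Delta+q_w)\tilde w=0$ and $(-\Delta+q_v)\tilde v=0$ near $0$ with $q_v-q_w=h\cdot(V\circ\Phi^{-1})$ for a positive real-analytic function $h$; in dimension $2$ one bypasses the conformal Laplacian, rewriting $(H_0-\lambda)w=0$ as $\Delta w+x_2^{-2}(\tfrac14+\lambda)w=0$ and using $\Delta(w\circ\Phi^{-1})=|(\Phi^{-1})'|^{-2}(\Delta w)\circ\Phi^{-1}$, reaching the same conclusion with no rescaling of $w$. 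In either case $\tilde v-\tilde w$ is $H^2$ and, after the usual cut-off, compactly supported near $0$, since the conformal factor is positive and real-analytic.

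It then remains to match the hypotheses of the Euclidean corner-scattering result invoked for Theorems~\ref{2Dthm} and~\ref{3Dthm}. After shrinking $B$ if necessary, near $0$ one has $V\circ\Phi^{-1}=\chi_{D}\,(\varphi\circ\Phi^{-1})$, where $D=\Phi(\mathcal C)$ is exactly a planar cone of opening angle less than $\pi$ when $n=2$, or the orthant ${]0,\infty[}^n$ when $n\geqslant3$ --- precisely the corners covered there; the coefficient $h\cdot(\varphi\circ\Phi^{-1})$ is nonzero at $0$ because $h(x_0)>0$ and $\varphi(x_0)\neq0$, and it lies in the same class as $\varphi$ --- H\"older $C^\alpha$, respectively $H^{s,r}$ with $s>n/r$ --- since composition with the real-analytic diffeomorphism $\Phi^{-1}$ and multiplication by a real-analytic function preserve these spaces. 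Thus the Euclidean result applies and produces the desired contradiction, so $V$ has no non-scattering energies. The remark preceding the theorem is simply the instance $\Phi\colon(x',x_n)\mapsto(x',x_n-1)$, a Euclidean translation and therefore conformal for $g_{\mathbb H}$, with $x_0=(0,\dots,0,1)$: here $\mathcal C=\Phi^{-1}({]0,\infty[}^n)$ is, in half-space coordinates, the shifted orthant, bounded by $n-1$ vertical hyperbolic hyperplanes together with the horizontal horosphere $\{x_n=1\}$, all meeting orthogonally at $x_0$.

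The main obstacle is carrying out the conformal transplantation of the equations cleanly in all cases --- in particular in dimension $2$, where the conformal Laplacian cannot be used as stated and one must instead track the explicit lower-order terms of $H_0$ directly --- and checking that the regularity classes genuinely survive composition with $\Phi^{-1}$ and multiplication by the conformal factor; this last point is where the assumption $s>n/r$ is needed, since it makes $H^{s,r}$ an algebra stable under smooth changes of coordinates. Everything else is identical to the proofs of Theorems~\ref{2Dthm} and~\ref{3Dthm}.
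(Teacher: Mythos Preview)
Your approach is correct and follows the same blueprint as the paper: push forward via $\Phi$, observe that the hyperbolic metric becomes conformally Euclidean with some factor $K$, and then reduce to the local machinery already built for Theorems~\ref{2Dthm} and~\ref{3Dthm}. The only difference is packaging. The paper does \emph{not} conjugate $v$ and $w$ by the conformal factor; it simply records that the pushforward of $H_0$ is exactly $H_K$, checks the orthogonality relation via Corollary~\ref{orthoInCoordsCorol}, and applies Proposition~\ref{finalProp} verbatim --- that proposition is stated for $H_K$ and performs the conjugation by $K^{(n-2)/2}$ internally (inside Proposition~\ref{CGOinM}) when building the CGO solutions. Your explicit use of conformal covariance of the conformal Laplacian is precisely that internal step done by hand. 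This is fine, but note that after your conjugation the free equation reads $(-\Delta+q_w)\tilde w=0$ with a non-constant smooth potential $q_w$, so it no longer matches the hypothesis $(H_K-\lambda)w=0$ of Proposition~\ref{finalProp} as a black box; you would instead have to invoke its ingredients (Lemma~\ref{harmonicSplitM} for the harmonic leading term, Proposition~\ref{CGOinM} for the CGO, Lemma~\ref{orig90DegCornerScatter} for the Laplace-transform step) directly. They all still apply, so nothing is lost --- but the paper's route avoids this small detour by keeping the operator in the $H_K$ form throughout.
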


Strictly speaking, the proofs of these corner scattering results are
mostly local, similarly to those in \cite{Hu--Salo--Vesalainen}, in
the sense that the only input from scattering theory consists of the
scattering solutions to the free and perturbed equations and the
suitable Rellich type theorem which is used to show that the solutions
coincide outside the corner near the vertex of the corner. After this
the equations themselves are immediately restricted to a small
neighbourhood of the vertex, and scattering theory is not mentioned
again in the proofs. Thus, the above corner scattering results hold in
any reasonable scattering setting as long as these first steps can be
taken and lead to the same equations in a small neighbourhood of the
vertex. See Proposition~\ref{finalProp} for a precise statement. The 
theorems are proven using these observations and by straightening 
the corner with suitable coordinates. See Figure~\ref{fig1}.
\begin{figure}
\begin{center}
\includegraphics{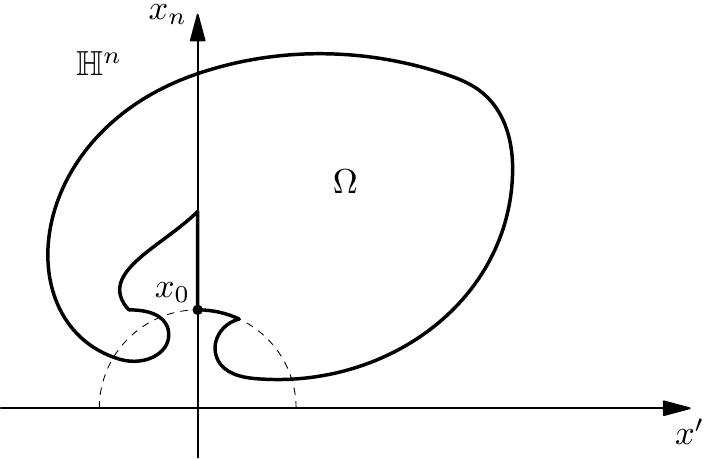}
\includegraphics{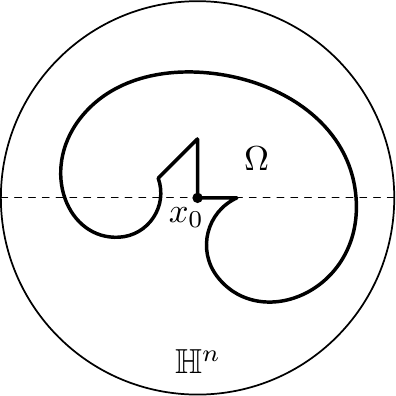}
\caption{The Poincar\'e upper half-space and ball models of $\mathbb H^n$ and a scatterer~$\Omega$.}
\label{fig1}
\end{center}
\end{figure}

\paragraph{Transmission eigenvalues.}
In the hyperbolic setting we also have a Rellich type theorem (see e.g.\ Theorem 2.10 in \cite{Isozaki--Kurylev}). Again, if $\Omega$ is a non-empty bounded open set in $\mathbb H^n$ with a connected exterior, then this Rellich type theorem and unique continuation imply that the restrictions of $v$ and $w$ into $\Omega$ solve the system
\[\begin{cases}
(H_0+\lambda^\nu V-\lambda)v=0,\\
(H_0-\lambda)w=0,\\
v-w\in H^2_0(\Omega),
\end{cases}\]
where the space $H^2_0(\Omega)$ is the same as $H^2_0(\Omega)$ in the Euclidean sense, if we employ the non-canonical embedding $\Omega\subset\mathbb H^n\subset\mathbb R^n$. Of course, the natural Sobolev norm in $\mathbb H^n$ would involve extra powers of $x_n$, but in a bounded domain the Euclidean and hyperbolic norms will be equivalent.

We may define transmission eigenvalues as in the Euclidean case.
\begin{definition}
Let $\Omega\subset\mathbb H^n$ be a bounded non-empty open set, and let $V\in L^\infty(\Omega)$. A number $\lambda\in\mathbb C$ (or rather $\lambda\in\mathbb C^\times$ when $\nu=1$) is called a transmission eigenvalue for the potential $V$ (and $\Omega$) if there exist $L^2(\Omega)$-functions $v\not\equiv0$ and $w\not\equiv0$ solving
\[\left(H_0+\lambda^\nu V-\lambda\right)v=0\quad\text{and}\quad\left(H_0-\lambda\right)w=0\]
in $\Omega$, and satisfying $v-w\in H^2_0(\Omega)$. We speak of Schr\"odinger transmission eigenvalues and Helmholtz transmission eigenvalues in the cases $\nu=0$ and $\nu=1$, respectively.
\end{definition}

Our two main theorems about transmission eigenvalues are better given separately. First, the Schr\"odinger case:
\begin{theorem}\label{schrodinger-transmission-eigenvalues}
Let $\Omega\subset\mathbb H^n$ be a bounded non-empty open set, and let $V\in L^\infty(\Omega)$ be bounded away from zero and take only positive real values. Then then set of $\lambda\in\mathbb C$ for which the system
\[\begin{cases}
(H_0+V-\lambda)v=0&\text{in $\Omega$,}\\
(H_0-\lambda)w=0&\text{in $\Omega$,}\\
v-w\in H^2_0(\Omega),
\end{cases}\]
has a solution $v,w\in L^2(\Omega)$ with $v\not\equiv0$ and $w\not\equiv0$, is a discrete subset of $\mathbb R_+$ which can only accumulate to $+\infty$. For each fixed $\lambda$, the space of solutions is finite-dimensional. Furthermore, if $N\in\mathbb Z_+$ and in some open ball $B\subseteq\Omega$ the potential $V$ is sufficiently large (depending on $n$, $N$ and the hyperbolic radius of the ball), then there are at least $N$ such transmission eigenvalues $\lambda$, counting multiplicities.
\end{theorem}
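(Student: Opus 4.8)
The plan is to reduce the transmission system to a fourth-order spectral problem in $H^2_0(\Omega)$ and to treat it by analytic Fredholm theory (for discreteness and finite multiplicity) together with a min--max comparison against Dirichlet eigenvalues on the ball $B$ (for existence), in the spirit of Colton--Kirsch--P\"aiv\"arinta and P\"aiv\"arinta--Sylvester. The structural fact I would lean on throughout is that, for a \emph{bounded} $\Omega\subset\mathbb H^n$, the Friedrichs realization $H_0^\Omega$ of $H_0$ on $\Omega$ is strictly positive: the bottom $(n-1)^2/4$ of the $L^2(\mathbb H^n)$-spectrum of $-\Delta_g$ is never attained by a nonzero function supported in a bounded set, so $\langle H_0u,u\rangle\geqslant\mu_1\lVert u\rVert_{L^2(\Omega)}^2$ with $\mu_1=\mu_1(H_0^\Omega)>0$ for every $u\in H^1_0(\Omega)$. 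Combined with $0<c_0\leqslant V\leqslant\lVert V\rVert_\infty$, this is what will confine the eigenvalues to $\mathbb R_+$.

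\textbf{Reduction.} Writing $u=v-w\in H^2_0(\Omega)$ and using $(H_0-\lambda)w=0$, $(H_0+V-\lambda)v=0$, one gets $(H_0-\lambda)u=-Vv$, hence $v=-V^{-1}(H_0-\lambda)u$ (where $V^{-1}\in L^\infty$ is essential), $w=v-u$, and, eliminating $v$ and $w$,
\[
(H_0-\lambda)\bigl(V^{-1}(H_0-\lambda)u\bigr)+(H_0-\lambda)u=0\quad\text{in }\Omega .
\]
In weak form this is $A_\lambda(u,\psi)=0$ for all $\psi\in H^2_0(\Omega)$, with $A_\lambda(u,\psi)=\int_\Omega V^{-1}(H_0-\lambda)u\,\overline{(H_0-\bar\lambda)\psi}\,\mathrm d\mu+\int_\Omega(H_0-\lambda)u\,\bar\psi\,\mathrm d\mu$; the pencil $\lambda\mapsto A_\lambda$ is a holomorphic polynomial of degree $2$, Hermitian for $\lambda\in\mathbb R$ since $H_0,V$ are formally self-adjoint. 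Conversely, a nontrivial $u\in H^2_0(\Omega)$ annihilating $A_\lambda$ reconstructs $v,w$; $v\not\equiv0$ by unique continuation, while $w\equiv0$ happens only when $\lambda$ is a Dirichlet eigenvalue of $H_0+V$ on $\Omega$, i.e.\ in a discrete, real, positive set to be removed at the end. Let $\mathcal A_\lambda\colon H^2_0(\Omega)\to H^2_0(\Omega)$ be the Riesz representative of $A_\lambda$; on $H^2_0(\Omega)$, $\mathcal A_\lambda=\lambda^2\mathcal S-\lambda\mathcal T+\mathcal P$ with $\mathcal S,\mathcal T,\mathcal P$ self-adjoint, $\mathcal S$ compact and $\geqslant0$ (it represents $u\mapsto\int_\Omega V^{-1}\lvert u\rvert^2$) and $\mathcal P>0$ (it represents $u\mapsto\int_\Omega V^{-1}\lvert H_0u\rvert^2+\langle H_0u,u\rangle$, positive by the positivity of $H_0^\Omega$).

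\textbf{Discreteness, finite dimensionality, positivity, reality.} Interior ellipticity of $H_0$ near $\overline\Omega$ gives $\lVert u\rVert_{H^2(\Omega)}^2\lesssim\lVert H_0u\rVert_{L^2(\Omega)}^2+\lVert u\rVert_{L^2(\Omega)}^2$ on $H^2_0(\Omega)$; since $V^{-1}\geqslant\lVert V\rVert_\infty^{-1}$ this yields a G\aa rding estimate $\operatorname{Re}A_\lambda(u,u)\geqslant c\lVert u\rVert_{H^2(\Omega)}^2-C(\lambda)\lVert u\rVert_{L^2(\Omega)}^2$ with $C(\lambda)$ locally bounded, so $\mathcal A_\lambda$ is Fredholm of index $0$ and the space of transmission eigenfunctions at a given $\lambda$ is finite-dimensional. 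Using the positivity of $H_0^\Omega$ I would check that $A_\lambda(u,u)=\int_\Omega V^{-1}\lvert(H_0-\lambda)u\rvert^2+\langle H_0u,u\rangle-\lambda\lVert u\rVert^2$ is coercive for $\lambda$ sufficiently negative, so $\mathcal A_\lambda$ is invertible there; analytic Fredholm theory then makes $\{\lambda\in\mathbb C:\ker\mathcal A_\lambda\neq0\}$ discrete with no finite accumulation. For positivity: at a real transmission eigenvalue, taking $\psi=u$ gives $\lambda\lVert u\rVert^2=\langle H_0u,u\rangle+\int_\Omega V\lvert v\rvert^2>0$ (both summands strictly positive), hence $\lambda>0$. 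For reality I would show $\ker\mathcal A_\lambda=0$ whenever $\operatorname{Im}\lambda\neq0$, exploiting the Hermitian quadratic-pencil structure $\lambda^2\mathcal S-\lambda\mathcal T+\mathcal P$ with $\mathcal S,\mathcal P\geqslant0$, the coercivity at $-\infty$, and the sign of $V$; this is the step I expect to require the most care.

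\textbf{Existence, and the main obstacle.} For real $\lambda$, $\mathcal A_\lambda$ is a coercive operator minus a compact one, so $0$ lies below $\inf\sigma_{\mathrm{ess}}(\mathcal A_\lambda)$, and the eigenvalues of $\mathcal A_\lambda$ that are $\leqslant0$ are given by $\mu_j(\lambda)=\min_{\dim W=j}\max_{0\neq u\in W}A_\lambda(u,u)/\lVert u\rVert_{H^2(\Omega)}^2$; each $\mu_j$ is continuous in $\lambda$, and $\lambda$ is a transmission eigenvalue exactly when some $\mu_j(\lambda)=0$. Coercivity forces $\mu_j(\lambda)>0$ for $\lambda$ very negative. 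Fixing $\lambda_0$ a little above the $N$-th Dirichlet eigenvalue of $H_0$ on $B$ (which depends only on $n$, $N$ and the hyperbolic radius of $B$), and choosing functions in $C_c^\infty(B)$ that nearly realize the first $N$ Dirichlet modes of $H_0$ on $B$, one obtains an $N$-dimensional $W\subset H^2_0(\Omega)$ on which $\langle(H_0-\lambda_0)u,u\rangle\leqslant-\delta\lVert u\rVert^2$ while $\int_B V^{-1}\lvert(H_0-\lambda_0)u\rvert^2\leqslant(\mathrm{const}/\inf_BV)\lVert u\rVert^2$, so if $V|_B$ is large enough (depending on $n$, $N$, and the radius of $B$) then $A_{\lambda_0}<0$ on $W$ and $\mu_N(\lambda_0)<0$. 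Since each $\mu_j$ for $j\leqslant N$ changes sign on $(-\infty,\lambda_0)$, and the number of negative eigenvalues of $\mathcal A_\lambda$ can increase by at most $\dim\ker\mathcal A_\lambda$ as $\lambda$ crosses a transmission eigenvalue, this delivers at least $N$ transmission eigenvalues in $\mathbb R_+$, counted with multiplicity. Beyond routine estimates, the hard parts will be: (i) the reality claim above, i.e.\ excluding complex eigenvalues of the pencil, which appears to need both $\mathcal S\geqslant0$ and $\mathcal P\geqslant0$ together with $V>0$; and (ii) checking that the degenerate cases --- $v\equiv0$ or $w\equiv0$, corresponding to Dirichlet eigenvalues of $H_0$ or of $H_0+V$ --- spoil neither the equivalence with the fourth-order problem nor the counting in the existence argument.
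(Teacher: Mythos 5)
Your outline follows the same broad architecture as the paper (reduce to a fourth-order problem on $H^2_0(\Omega)$, study a $\lambda$-dependent quadratic form, use min--max plus an intermediate-value argument for existence), but one step you flag as delicate is in fact a genuine gap: the reality of the eigenvalues. You propose to exclude $\ker\mathcal A_\lambda\neq0$ for $\operatorname{Im}\lambda\neq0$ by ``exploiting the Hermitian quadratic-pencil structure $\lambda^2\mathcal S-\lambda\mathcal T+\mathcal P$ with $\mathcal S,\mathcal P\geqslant0$.'' This cannot work as stated: a self-adjoint quadratic pencil with positive semidefinite leading and constant coefficients can have non-real spectrum (already the scalar pencil $\lambda^2-\lambda+1$, i.e.\ $\mathcal S=\mathcal P=1$, $\mathcal T=1$, has roots $e^{\pm i\pi/3}$); indeed, testing $A_\lambda(u,u)=0$ with an eigenfunction $u$ gives a real quadratic $S(u)\lambda^2-T(u)\lambda+P(u)=0$ whose discriminant has no a priori sign. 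The paper bypasses the pencil entirely and proves reality \emph{directly from the original system}: pairing the two equations against $v$ and $w$ and using $v-w\in H^2_0(\Omega)$ to kill all boundary terms yields $\operatorname{Im}\lambda\,\lVert v\rVert_{L^2(\Omega;\mathrm d\mu)}^2=\operatorname{Im}\lambda\,\lVert w\rVert_{L^2(\Omega;\mathrm d\mu)}^2=0$. You need this (or an equivalent) computation; once reality is in hand, your analytic-Fredholm discreteness argument and your positivity argument ($\lambda\lVert u\rVert^2=\langle H_0u,u\rangle+\int V\lvert v\rvert^2>0$, using strict positivity of the Dirichlet bottom of $H_0$ on bounded sets) do the rest.

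The remaining differences from the paper are legitimate alternative routes. For discreteness the paper invokes Kato's theory of self-adjoint analytic families of forms of type (a), obtaining real-analytic eigenvalue branches $\mu_\ell(\lambda)$ with a uniform perturbation bound; your analytic Fredholm argument (polynomial pencil, coercive for $\lambda\ll0$, compact embedding $H^2_0\hookrightarrow L^2$) reaches the same conclusion more directly. For existence the paper does \emph{not} use Dirichlet modes of $H_0$ on $B$: it constructs, via explicit hypergeometric-function asymptotics, genuine transmission eigenpairs for a \emph{constant} potential $V_0$ on $N$ disjoint congruent balls, obtaining an $N$-dimensional subspace on which the auxiliary form vanishes exactly, and then uses monotonicity in $V$. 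Your route via approximate Dirichlet eigenfunctions is more elementary and avoids special functions, but you must be careful with the order of quantifiers: the bound $\int_BV^{-1}\lvert(H_0-\lambda_0)u\rvert^2\leqslant C\lVert u\rVert^2/\inf_BV$ holds with $C=C_W$ a norm-equivalence constant on the chosen finite-dimensional $W\subset H^2_0(B)$, and $C_W$ blows up as the approximation of the (merely $H^1_0$) Dirichlet modes improves. You must fix $W$ first (which, by homogeneity of $\mathbb H^n$, can be done canonically so that $C_W$ depends only on $n$, $N$ and the radius) and only then demand $\inf_BV$ large relative to $C_W$. Finally, your worry about the degenerate cases $v\equiv0$ or $w\equiv0$ is unfounded: either forces $(H_0-\lambda)u=0$ or $(H_0+V-\lambda)u=0$ with $u\in H^2_0(\Omega)$, hence $u\equiv0$ by extension by zero and unique continuation, so no exceptional set needs to be removed.
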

\noindent
We emphasize that for real-valued potentials $V$, all Schr\"odinger transmission eigenvalues are real.

In the Helmholtz case, we get a result of a somewhat different shape:
\begin{theorem}\label{helmholtz-transmission-eigenvalues}
Let $\Omega\subset\mathbb H^n$ be a bounded non-empty open set, and let $V\in L^\infty(\Omega)$ be bounded away from both zero and one and take values either only from $\left]-\infty,0\right[$, or only from $\left]0,1\right[$. Then then set of $\lambda\in\mathbb R^\times$ for which the system
\[\begin{cases}
(H_0+\lambda V-\lambda)v=0&\text{in $\Omega$,}\\
(H_0-\lambda)w=0&\text{in $\Omega$,}\\
v-w\in H^2_0(\Omega),
\end{cases}\]
has a solution $v,w\in L^2(\Omega)$ with $v\not\equiv0$ and $w\not\equiv0$, is an infinite discrete subset of $\mathbb R$. For each fixed $\lambda\in\mathbb R^\times$, the set of solutions is finite-dimensional. Furthermore, if $V$ takes only negative values, then the set of $\lambda$ does not contain negative elements.
\end{theorem}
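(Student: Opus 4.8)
The plan is to run the classical transmission-eigenvalue machinery of \cite{Paivarinta--Sylvester} and \cite{Cakoni--Gintides--Haddar} with the operator $H_0$ in place of $-\Delta$. I would work in $L^2(\Omega;\mathrm d\mu)$, with respect to which $H_0$ is self-adjoint; since $\Omega$ is bounded, the hyperbolic and Euclidean Sobolev norms on $\Omega$ are equivalent, so Rellich--Kondrachov, interpolation, the G{\aa}rding inequality and interior elliptic estimates all apply verbatim. Two facts about $H_0$ take over the role of familiar Euclidean ones: the $\mathbb H^n$ spectral-gap inequality $\int\abs{\nabla_g f}^2\,\mathrm d\mu\geq\frac{(n-1)^2}4\int\abs f^2\,\mathrm d\mu$ for compactly supported $f$ gives $\langle H_0u,u\rangle_{\mathrm d\mu}\geq c_0\norm{u}^2$ with $c_0>0$ for all $u\in H^1_0(\Omega)$ (the inequality being strict on the bounded domain $\Omega$), and elliptic regularity for the invertible Dirichlet realisation of $H_0$ gives $\norm{H_0u}\geq c\,\norm{u}_{H^2_0(\Omega)}$. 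For the reduction, $(H_0-\lambda)w=0$ and $(H_0-\lambda)v=-\lambda Vv$ force $(H_0-\lambda)u=-\lambda Vv$, hence $v=-\tfrac1{\lambda V}(H_0-\lambda)u$ --- here $\lambda\in\mathbb R^\times$ and $V$ bounded away from $0$ enter --- and substituting this back into the equation for $v$ yields the fourth-order equation $(H_0-\lambda+\lambda V)\tfrac1V(H_0-\lambda)u=0$ in $\Omega$, read weakly for $u\in H^2_0(\Omega)$. Testing against $\psi\in H^2_0(\Omega)$ and using self-adjointness of $H_0$ produces the Hermitian form
\[
Q_\lambda(u,\psi)=\int_\Omega\tfrac1V(H_0-\lambda)u\,\overline{(H_0-\lambda)\psi}\,\mathrm d\mu+\lambda\int_\Omega(H_0-\lambda)u\,\overline\psi\,\mathrm d\mu ,
\]
with diagonal $Q_\lambda(u,u)=\int_\Omega\tfrac1V\abs{(H_0-\lambda)u}^2\,\mathrm d\mu+\lambda\langle H_0u,u\rangle_{\mathrm d\mu}-\lambda^2\norm{u}^2$, and $\lambda\in\mathbb R^\times$ is a transmission eigenvalue exactly when the self-adjoint operator $\mathbb T_\lambda$ on $H^2_0(\Omega)$ representing $Q_\lambda$ has nontrivial kernel, which is isomorphic to the solution space of the system.

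For discreteness and finite dimensionality, observe that $\tfrac1V$ is bounded and of one fixed sign, so, combining the two facts above with interpolation and Young's inequality, $\pm Q_\lambda$ (with the sign of $\tfrac1V$) obeys a G{\aa}rding inequality $\pm Q_\lambda(u,u)\geq c_1\norm{u}_{H^2_0(\Omega)}^2-C_\lambda\norm{u}^2$; $V$ bounded away from $0$ is used once more. Thus $\pm\mathbb T_\lambda$ is a positive invertible operator plus a compact one, $0$ lies outside its essential spectrum, $\ker\mathbb T_\lambda$ is always finite-dimensional, and writing $\mathbb T_\lambda=(\mathbb T_\lambda\pm C\iota^\ast\iota)(I-T(\lambda))$ with $\iota\colon H^2_0(\Omega)\hookrightarrow L^2$ the compact embedding and $T(\lambda)$ compact and analytic in $\lambda$, the analytic Fredholm theorem shows $\{\lambda:0\in\sigma(\mathbb T_\lambda)\}$ is discrete as soon as $\mathbb T_\lambda$ is invertible for one value of $\lambda$. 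Such a value, and the final assertion of the theorem, come from the diagonal form: if $V<0$ and $\lambda<0$, all three terms of $Q_\lambda(u,u)$ are negative and $(H_0-\lambda)u\neq0$ for $u\neq0$ by $c_0>0$, so $Q_\lambda$ is definite, no negative $\lambda$ is a transmission eigenvalue, and $\lambda=-1$ is a point of invertibility; if instead $0<V<1$, the estimate $\norm{H_0u}\geq c\norm{u}_{H^2_0}$ makes $Q_\lambda$ definite for all real $\lambda$ of sufficiently small modulus, giving a point of invertibility and a threshold $\lambda_\ast>0$ below which no real transmission eigenvalues occur.

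For the existence of infinitely many real transmission eigenvalues I would transplant the argument of \cite{Cakoni--Gintides--Haddar}. Our hypotheses on $V$ translate (via $n=1-V$) exactly into the two sign-definite cases $n>1$ and $0<n<1$ with the bounds treated there, so one can follow their scheme: produce a coercive form $A_\lambda$ and a compact non-negative operator $\mathcal B$ on $H^2_0(\Omega)$ so that $\lambda$ is a transmission eigenvalue iff $0$ is an eigenvalue of the self-adjoint operator $\mathbb A_\lambda=A_\lambda-\lambda\mathcal B$, whose spectrum below the bottom of its essential spectrum consists of eigenvalue branches $\mu_k(\lambda)$ that are continuous and monotone in $\lambda$ over the admissible range $\lambda>\lambda_\ast>0$, with $\mu_k(\lambda)\to+\infty$ as $k\to\infty$ for fixed $\lambda$. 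One checks that $\mathbb A_\lambda$ has the favourable sign near $\lambda_\ast$, and that as $\lambda\to+\infty$ it acquires arbitrarily many eigenvalues of the opposite sign, the latter by comparing $\mathbb A_\lambda$ from above, via an operator inequality, with the analogous operator attached to a small geodesic ball $B$ with $\overline B\subset\Omega$, whose Dirichlet eigenvalues of $H_0$, after the rescaling dictated by the bounds on $V$, are eventually dominated by $\lambda$. The intermediate value theorem and the monotonicity of the branches $\mu_k(\cdot)$ then yield, for each $k$, a transmission eigenvalue in $\left]\lambda_\ast,\infty\right[$; these are distinct and tend to $+\infty$, so there are infinitely many. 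The cases $V<0$ and $0<V<1$ run in parallel, the sign of $\tfrac1V$ merely reversing all monotonicities and swapping the roles of $+\infty$ and $-\infty$.

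The main obstacle is the one that makes the Helmholtz case genuinely harder than the Schr\"odinger case: $\lambda$ enters $Q_\lambda$ quadratically rather than affinely, so there is no single self-adjoint eigenvalue problem to diagonalise, and the weight of the existence proof rests on constructing the auxiliary monotone family $A_\lambda-\lambda\mathcal B$ and on forcing the comparison with a ball's Dirichlet spectrum to produce infinitely many sign changes of the branches $\mu_k(\cdot)$. On the technical side everything reduces to having the right hyperbolic substitutes for $\norm{\Delta u}\gtrsim\norm{u}_{H^2_0}$ and $-\Delta\geq0$ --- the interior elliptic estimate for $H_0$, which, unlike $-\Delta$, carries a first-order term coming from the shift in \eqref{H0def}, and the sharp $\mathbb H^n$ spectral-gap inequality in the form that is strict on bounded domains --- after which the Euclidean bookkeeping transfers with only cosmetic changes.
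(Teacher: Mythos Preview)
Your treatment of discreteness and of the sign statement for $V<0$ is essentially the paper's: the paper also reduces to the fourth-order equation, introduces the same quadratic form $Q_\lambda$, and appeals to analytic operator theory (Kato's type-(B) families with real-analytic eigenvalue branches $\mu_\ell(\lambda)$ rather than your analytic Fredholm formulation, but the two are interchangeable here). The observation that $Q_\lambda$ is strictly positive at $\lambda=0$ (via unique continuation for $H_0u=0$ with $u\in H^2_0(\Omega)$), and for $V<0$ at every $\lambda<0$, is exactly what the paper uses.

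The gap is in your existence argument. The Cakoni--Gintides--Haddar scheme you invoke does not bypass the explicit ball computation: its engine is precisely that, for a \emph{constant} potential $V_0$ in a ball, one already knows a transmission eigenvalue $\lambda_0$ exists, and the corresponding $H^2_0$-eigenfunctions on $N$ disjoint congruent balls produce an $N$-dimensional subspace on which the comparison form $\widetilde Q_{\lambda_0}$ vanishes, hence $Q_{\lambda_0}\leqslant0$. In $\mathbb R^n$ that input is the Colton--P\"aiv\"arinta--Sylvester Bessel-function calculation; in $\mathbb H^n$ it is not free, and the paper devotes a separate proposition to it, solving the radial interior transmission problem in a geodesic ball via the hypergeometric equation (the radial part of $H_0-\lambda$ in the coordinate $\rho=(\cosh r-1)/2$ is hypergeometric) and checking, through the large-$\lambda$ asymptotics of $\vphantom F_2F_1$, that the $2\times2$ boundary determinant changes sign infinitely often. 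Your proposed substitute --- ``Dirichlet eigenvalues of $H_0$ on the ball, rescaled by the bounds on $V$'' --- does not supply the needed test functions: Dirichlet eigenfunctions of $H_0$ lie in $H^1_0(B)$ but not in $H^2_0(B)$, so they cannot be inserted into $Q_\lambda$, and an $H^1$-approximation fails because the leading term $\norm{(H_0-\lambda)u}^2/\lvert V\rvert$ is not continuous in the $H^1$-topology. Thus the step ``as $\lambda\to+\infty$ the operator acquires arbitrarily many eigenvalues of the opposite sign'' is precisely where the hyperbolic constant-potential-in-a-ball calculation is hidden, and your plan omits it.

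A smaller point: in the Helmholtz case $\lambda$ enters $Q_\lambda$ quadratically, so the branches $\mu_k(\lambda)$ need not be monotone; the paper does not claim monotonicity and uses only continuity (real-analyticity, in fact) together with $\mu_\ell(0)>0$ and $\mu_\ell(\lambda_0)\leqslant0$ to force zeros.
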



\section{Complex geometrical optics solutions} \label{CGO}

Complex geometrical optics solutions \cite{Sylvester--Uhlmann} and their error estimates are a fundamental tool for studying corner scattering. We will choose suitable coordinates in $\mathbb H^n$ and conjugate the free operator $H_0$ with a suitable function to show the existence of these solutions. This allows us to bring past techniques of \cite{Blasten--Paivarinta--Sylvester, Paivarinta--Salo--Vesalainen, Hu--Salo--Vesalainen} into the hyperbolic setting.

By \emph{conjugating} the free operator $H_0$ from \eqref{H0def} on page \pageref{H0def} with a suitable function $K$ we get
\begin{equation} \label{conjH0}
  K^{-(n+2)/2}\, H_0 \,\bigl(K^{(n-2)/2}f\bigr) = (-\Delta + Q_K)\,f,
\end{equation}
where $Q_K$ is a new potential function depending on $K$ and $\Delta$ is the Euclidean Laplacian on the half-space model $\R^{n-1}\times\R_+$ of $\mathbb H^n$. A similar formula holds when instead of $H_0$ we have a \emph{Laplace--Beltrami operator} $H_K$ for any metric conformal to the Euclidean one.

A Riemannian metric $g$ on an open nonempty set $U\subset\R^n$ can be written as $g\colon U\times\R^n\times\R^n\longrightarrow\R$, $(x,a,b)\mapsto g_x(a,b)$ where $g_x$ is an inner product. It is conformal to the Euclidean metric if $g_x(a,b) = \lambda^2(x) \,a\cdot b$ for some smooth function $\lambda\colon U \longrightarrow \R_+$. To simplify formulas we write $K = 1/\lambda$. The metric can be represented by a matrix with components $(g_x)_{ij} = g_x(e_i,e_j)$ where $e_1$, $e_2$, \dots, $e_n$ are the standard unit vectors of $\R^n$. Indeed, under such coordinates we have $(g_x)_{ii} = K^{-2}(x)$ and $(g_x)_{ij}=0$ when $i\neq j$. Hence its determinant is $\lvert g_x\rvert = K^{-2n}(x)$. We can then define the $K$-divergence at $x\in U$ by
\[
  \nabla_K \cdot X(x) = \frac{1}{\sqrt{\abs{g_x}}} \sum_{j=1}^n \partial_j( \sqrt{\abs{g_x}} \,X_j(x) ) = K^n(x) \,\nabla \cdot (K^{-n}(x)\, X(x))
\]
where $x \longmapsto X(x) \in \R^n$ is once differentiable on $U$. Here $\nabla\cdot$ is the Euclidean divergence. Denote by $g_x^{ij}$ the component $(g_x^{-1})_{ij}$ of the inverse matrix of $g_x$. The gradient is
\[
  \nabla_K f(x) = \sum_{i=1}^n\sum_{j=1}^n e_i\, g_x^{ij} \,\partial_{x_j} f(x) = K^2(x)\, \nabla f(x)
\]
for any differentiable function $f$. Hence the hyperbolic Laplace--Beltrami operator is
\[
  -\Delta_K f = - \nabla_K \cdot (\nabla_K f) = -K^2 \,\Delta f + (n-2)\, K \,\nabla K \cdot \nabla f
\]
in these coordinates. We remark that $K(x) = x_n$ in the upper half-space coordinates of $\mathbb H^n$, and $K(x) = 2/(1-\abs{x}^2)$ in the Poicar\'e disk. We indeed have $H_0 = -\Delta_K - (n-1)^2/4$ with $K(x)=x_n$ in \eqref{H0def}.

The following lemma generalizes \eqref{conjH0} to other conformal coordinates. It suggests the formula
\[
  u_0(x) = K(x)^{(n-2)/2}\,e^{x\cdot\rho}\,\bigl(1 + \psi(x)\bigr)
\]
for the complex geometrical optics solutions.

\begin{lemma} \label{conjugation-lemma}
Let $U\subseteq\mathbb R^n$ be a non-empty open set, let $K\in C^2(U)$ take only positive real values, and let $H_K$ be the partial differential operator given there by
\begin{equation}\label{HKformula}
H_K = - K^2 \,\Delta + (n-2) \,K \,\nabla K \cdot \nabla - \frac{(n-1)^2}{4}.
\end{equation}
Then
\begin{align}
&K^{-(n+2)/{2}} \,H_K \,\bigl( K^{(n-2)/2} f \bigr) \notag\\
&\qquad= -\Delta f+ \frac{(n-2)\,(n\abs{\nabla K}^2 - 2\,K\,\Delta K) - (n-1)^2}{4\,K^2} \,f,\label{DeltaMConjEQ}
\end{align}
for any $f\in C^2(U)$.
\end{lemma}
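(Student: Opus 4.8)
The plan is to verify \eqref{DeltaMConjEQ} by a direct computation, applying the product rule carefully to each term of $H_K$ acting on $K^{(n-2)/2}f$ and then stripping off the prefactor $K^{-(n+2)/2}$. To keep the algebra manageable I would introduce the abbreviation $m = (n-2)/2$, so that the conjugating factor is $K^m$ and the outer factor is $K^{-(m+2)} = K^{-(n+2)/2}$. Everything then reduces to computing $\nabla(K^m f)$ and $\Delta(K^m f)$ via Leibniz.

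\medskip

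First I would compute the gradient: $\nabla(K^m f) = K^m \nabla f + m K^{m-1} f \nabla K$. From this the Laplacian follows by taking another divergence:
\[
\Delta(K^m f) = K^m \Delta f + 2 m K^{m-1} \nabla K \cdot \nabla f + m K^{m-1} f \Delta K + m(m-1) K^{m-2} f \abs{\nabla K}^2.
\]
Next I would assemble $H_K(K^m f) = -K^2 \Delta(K^m f) + (n-2) K \nabla K \cdot \nabla(K^m f) - \tfrac{(n-1)^2}{4} K^m f$. The term $-K^2 \Delta(K^m f)$ contributes $-K^{m+2}\Delta f$, a cross term $-2m K^{m+1} \nabla K \cdot \nabla f$, and two zeroth-order terms in $f$. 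The term $(n-2) K \nabla K \cdot \nabla(K^m f) = (n-2) K^{m+1} \nabla K \cdot \nabla f + (n-2) m K^m f \abs{\nabla K}^2$. Since $2m = n-2$, the two cross terms $-2m K^{m+1}\nabla K\cdot\nabla f$ and $+(n-2)K^{m+1}\nabla K\cdot\nabla f$ cancel exactly — this is the crucial cancellation that makes the conjugation produce a Schr\"odinger operator with no first-order part.

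\medskip

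What remains is to collect the coefficient of $f$. Dividing through by $K^{m+2} = K^{(n+2)/2}$, the $-K^{m+2}\Delta f$ term becomes $-\Delta f$, and the zeroth-order terms, all carrying $K^m f$ before division and hence a factor $K^{-2}$ after, combine into
\[
\frac{1}{K^2}\left( -m K \Delta K - m(m-1)\abs{\nabla K}^2 + (n-2) m \abs{\nabla K}^2 - \frac{(n-1)^2}{4} \right) f.
\]
Substituting $m = (n-2)/2$ gives $-mK\Delta K = -\tfrac{n-2}{2}K\Delta K$ and $\bigl((n-2)m - m(m-1)\bigr)\abs{\nabla K}^2 = m\bigl((n-2)-(m-1)\bigr)\abs{\nabla K}^2 = \tfrac{n-2}{2}\cdot\tfrac{n}{2}\abs{\nabla K}^2 = \tfrac{n(n-2)}{4}\abs{\nabla K}^2$. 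Factoring out $1/4$ then yields exactly $\dfrac{(n-2)(n\abs{\nabla K}^2 - 2K\Delta K) - (n-1)^2}{4K^2}$, as claimed. There is no real obstacle here: the only thing to be careful about is bookkeeping the powers of $K$ and confirming the sign and the identity $2m = n-2$ that kills the drift term; the regularity hypothesis $K\in C^2(U)$ with $K>0$ is exactly what is needed so that $K^{(n-2)/2}f\in C^2$ and all the derivatives above are classical.
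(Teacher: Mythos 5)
Your computation is correct, and it is the same argument the paper gives (the paper simply notes $\nabla K^s = sK^{s-1}\nabla K$, $\Delta K^s = s(s-1)K^{s-2}\abs{\nabla K}^2 + sK^{s-1}\Delta K$ and invokes the Leibniz rule); you have just written out the bookkeeping explicitly, including the key cancellation of the first-order term via $2m=n-2$.
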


\begin{proof}
For a given real number $s$ we have $\nabla K^s = s \,K^{s-1} \,\nabla K$ and
\[
\Delta K^s = \nabla \cdot (s \,K^{s-1} \,\nabla K ) = s\,(s-1)\, K^{s-2} \abs{\nabla K}^2 + s \,K^{s-1}\, \Delta K .
\]
The rest follows from the Leibniz rule.
\end{proof}

We are going to apply techniques from \cite{Blasten--Paivarinta--Sylvester, Paivarinta--Salo--Vesalainen, Hu--Salo--Vesalainen} to construct complex geometrical optics solutions $u_0$. We start with the equation $(H_0+\lambda^\nu \,V-\lambda)u_0=0$ in $\mathbb H^n$. After a suitable choice of coordinates in a non-empty open set $U \subseteq \R^n$ this equation will become
\[
  \left(-K^2 \,\Delta + (n-2)\,K\, \nabla K \cdot \nabla - \frac{(n-1)^2}{4} + \lambda^\nu\, V - \lambda \right)u_0 = 0
\]
and $K\colon U\longrightarrow\R_+$ is chosen as described before Lemma~\ref{conjugation-lemma}. Setting $u_0 = K^{(n-2)/2}u$ gives us then
\[
  \left( -\Delta + \frac{(n-2)\,(n\abs{\nabla K}^2 - 2\,K\,\Delta K) - (n-1)^2 +4\left(\lambda^\nu\, V - \lambda\right)}{4\,K^2}\right) u = 0
\]
by the lemma.

For the purposes of corner scattering it is enough to show the existence of $u$ in a small neighbourhood $B\subseteq U$ of the corner. We shall assume that the potential $V$ is equal to $\chi_{\mathcal C}\, \varphi$ in $U$. Here $\mathcal C \subset \R^n$ is a polyhedral Euclidean cone and $\varphi \colon \R^n \longrightarrow \C$ a H\"older continuous function. The function $\Phi \in C^\infty_{\mathrm c}(U)$ below is a cut-off function which restricts the problem to the neighbourhood of interest.

\begin{lemma}\label{potEstims}
Let $n\in\mathbb Z_+$ with $n\geqslant2$ and let $\mathcal C \subset \R^n$ be an open polyhedral cone.
Consider the potential $V = \chi_{\mathcal C} \,\varphi$ where $\varphi \colon \R^n \longrightarrow \C$ is compactly supported and in a function space $X$. 
Here, in
\begin{enumerate}[{case} a)]
	\item $X = C^\alpha(\R^n)$ for some $\alpha\in\mathbb R_+$, or in\label{caseHold}
	\item $X = H^{s,r}(\R^n)$ for some $r \in\left[1,\infty\right[$ and $s\in\left]n/r,\infty\right[$.\label{caseSob}
\end{enumerate}

Assume that $K \colon U \longrightarrow \R_+$ is a smooth real-valued function with a positive lower bound and defined in a non-empty open set $U \subseteq \R^n$.
Let $\Phi \in C^\infty_{\mathrm c}(U)$ and define $Q\colon\R^n\longrightarrow\C$ by extending
\begin{equation}\label{qDef}
  \Phi\, \frac{(n-2)(n\abs{\nabla K}^2 - 2K\,\Delta K) - (n-1)^2 +4(\lambda^\nu \,V - \lambda)}{4K^2}
\end{equation}
by zero outside of $\supp \Phi$, and where $\nu\in\{0,1\}$ and $\lambda\in\mathbb R_+$ are constants.

Then, in the case~\ref{caseHold}), if $q\in\left[1,2\right]$ and $s \in\left[0, \min(1/q, \alpha)\right[$, we have
\begin{equation}\label{QSobMult}
  Q \in H^{s,q}(\R^n), \qquad \norm{Q f}_{H^{s,q}(\R^n)} \leqslant C \norm{f}_{H^{s,q'}(\R^n)}
\end{equation}
for $1/q + 1/q'=1$. In the case~\ref{caseSob})
\[
  \widehat Q \in B^1_{q,1}, \qquad \norm{\mathscr{F}\{Qg\}}_{B^1_{q,1}} \leqslant C \norm{\widehat g}_{B^{-1}_{q,\infty}}
\]
for any $q\in\left]1,\infty\right[$.
\end{lemma}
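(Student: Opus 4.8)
The plan is to decompose $Q$ as the sum of a harmless smooth compactly supported function and one ``corner'' term of exactly the shape already analysed in \cite{Blasten--Paivarinta--Sylvester, Paivarinta--Salo--Vesalainen, Hu--Salo--Vesalainen}, and then to invoke (or reprove) the mapping properties of that term. Concretely, since $K$ is smooth on $U$ with a positive lower bound, the functions $K^{-2}$, $\abs{\nabla K}^2$ and $K\,\Delta K$ are smooth there and $\Phi\in C^\infty_{\mathrm c}(U)$, so the part of \eqref{qDef} not involving $V$ extends by zero to a function $Q_{\mathrm{sm}}\in C^\infty_{\mathrm c}(\R^n)$, and
\[
Q=Q_{\mathrm{sm}}+\lambda^\nu\,\chi_{\mathcal C}\,\psi ,\qquad \psi:=\Phi\,K^{-2}\,\varphi .
\]
Multiplication by the smooth compactly supported factor $\Phi\,K^{-2}$ preserves both $C^\alpha(\R^n)$ and $H^{s,r}(\R^n)$, so $\psi$ is compactly supported and lies in the same space $X$ as $\varphi$; and $Q_{\mathrm{sm}}$, being Schwartz, lies in every $H^{s,q}$ and has Schwartz Fourier transform, so that multiplication by it is bounded on every $H^{s,q}$ and convolution by $\widehat{Q_{\mathrm{sm}}}$ is bounded on $B^1_{q,1}$ and $B^{-1}_{q,\infty}$. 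It therefore suffices to prove both assertions with $\chi_{\mathcal C}\psi$ in place of $Q$.

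In case~\ref{caseHold}) I would use three standard facts. First, pointwise multiplication by the characteristic function of a half-space is bounded on $H^{s,q}(\R^n)$ for $0\leqslant s<1/q$; since a polyhedral cone is a finite intersection of half-spaces, iterating this shows that multiplication by $\chi_{\mathcal C}$ is bounded on $H^{s,q}(\R^n)$ in the same range. Second, for $0\leqslant s<\min(\alpha,1)$ multiplication by a compactly supported $C^\alpha$ function is bounded on $H^{s,q}(\R^n)$, and in particular a compactly supported $C^\alpha$ function lies in $H^{s,q}(\R^n)$ when $q\in[1,2]$ (via the Gagliardo--Slobodeckij characterisation, using a H\"older exponent strictly between $s$ and $\min(\alpha,1)$, together with the embedding $B^s_{q,q}\hookrightarrow H^{s,q}$ valid for $q\leqslant2$). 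Third, for distributions supported in a fixed compact set and $q\leqslant q'$ one has $H^{s,q'}\hookrightarrow H^{s,q}$. Granting these, $\chi_{\mathcal C}\psi\in H^{s,q}$ follows since $\psi\in H^{s,q}$ and multiplication by $\chi_{\mathcal C}$ is bounded there. For the bound in \eqref{QSobMult}, pick $\Phi_1\in C^\infty_{\mathrm c}(\R^n)$ equal to $1$ on $\supp(\chi_{\mathcal C}\psi)$, so that $(\chi_{\mathcal C}\psi)\,f=(\chi_{\mathcal C}\psi)(\Phi_1 f)$; then $\Phi_1 f$ is compactly supported with $\norm{\Phi_1 f}_{H^{s,q'}}\leqslant C\norm{f}_{H^{s,q'}}$, hence (as $q\leqslant2\leqslant q'$) $\norm{\Phi_1 f}_{H^{s,q}}\leqslant C\norm{f}_{H^{s,q'}}$ by the third fact, while multiplication by $\chi_{\mathcal C}\psi$ maps $H^{s,q}$ boundedly to itself by the first two. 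Composing these maps gives \eqref{QSobMult}. The single threshold $s<1/q$ suffices precisely because, after the compact-support embedding, the whole computation takes place at the fixed integrability exponent $q$.

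In case~\ref{caseSob}) I would follow the Fourier-side analysis of \cite{Blasten--Paivarinta--Sylvester, Paivarinta--Salo--Vesalainen, Hu--Salo--Vesalainen}, carried out in the spaces whose elements have Fourier transforms in the Besov scale. With $Q=Q_{\mathrm{sm}}+\lambda^\nu\chi_{\mathcal C}\psi$ and $\psi\in H^{s,r}(\R^n)$ compactly supported, $s>n/r$, the contribution of $Q_{\mathrm{sm}}$ is again harmless since $\widehat{Q_{\mathrm{sm}}}$ is Schwartz; and after localising $\chi_{\mathcal C}$ by a cut-off equal to $1$ near the vertex, $\mathscr F\{\chi_{\mathcal C}\psi\}=(2\pi)^{-n}\,\widehat{\chi_{\mathcal C}}*\widehat\psi$, where $\widehat{\chi_{\mathcal C}}$ is, up to Dirac-mass contributions, a tensor product of one-dimensional principal-value kernels $1/(i\xi_j)$ --- convolution with which is a bounded operator of order zero on the Besov scale precisely for $1<q<\infty$ --- while the regularity and decay of $\widehat\psi$ supplied by the Sobolev hypothesis $s>n/r$ furnish the two orders of smoothing and the summability improvement recorded in the statement. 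Since $\psi$ has the same regularity and compact support as the potentials treated in those references, their estimates apply with only minor modifications and give $\widehat Q\in B^1_{q,1}$ together with $\norm{\mathscr F\{Qg\}}_{B^1_{q,1}}\leqslant C\,\norm{\widehat g}_{B^{-1}_{q,\infty}}$.

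The main obstacle is the sharp pointwise-multiplier property of a half-space characteristic function on $H^{s,q}$ (and its Fourier-side analogue in case~\ref{caseSob})): this is the only step in which the corner genuinely enters, and it is what forces the threshold $s<\min(1/q,\alpha)$, since $\chi_{\{x_n>0\}}$ belongs to $H^{s,q}(\R^n)$ only for $s<1/q$. In case~\ref{caseSob}) the extra delicate point is that $\widehat{\chi_{\mathcal C}}$ decays only like $\abs\xi^{-1}$ transversally to a face of $\mathcal C$, and worse along the lower-dimensional faces, so the estimate closes only when this is offset by sufficient regularity of $\psi$ --- which is exactly the role of the hypothesis $s>n/r$.
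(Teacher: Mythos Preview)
Your decomposition $Q=Q_{\mathrm{sm}}+\lambda^\nu\chi_{\mathcal C}\psi$ and your treatment of case~\ref{caseHold}) are essentially the paper's proof: the paper writes $Q=\Psi_1+\bigl(\prod_j\chi_{H_j}\bigr)\Psi_2\varphi$, cites Triebel for the half-space multiplier on $H^{s,q}$ when $sq<1$ and for $C^\alpha$ being a pointwise multiplier on $H^{s,q}$ when $s<\alpha$, and uses compact support to pass from $H^{s,q'}$ to $H^{s,q}$ exactly as you do with your cut-off $\Phi_1$.

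For case~\ref{caseSob}) your sketch is along the right lines but contains an inaccuracy. You describe $\widehat{\chi_{\mathcal C}}$ as (up to Dirac contributions) a tensor product of one-dimensional principal-value kernels $1/(i\xi_j)$; that is only true when $\mathcal C$ is an orthant. For a general polyhedral cone the faces need not be orthogonal, and no tensor-product formula is available. The paper avoids this by writing $\chi_{\mathcal C}=\prod_{j=1}^m\chi_{H_j}$ and iterating the single half-space estimate $\lVert\mathscr F\{\chi_{H_j}a\}\rVert_{B^1_{q,1}}\leqslant C\lVert\widehat a\rVert_{B^1_{q,1}}$ from \cite{Blasten--Paivarinta--Sylvester}, together with the two companion estimates $\lVert\widehat a\rVert_{B^1_{q,1}}\leqslant CR\lVert\widehat a\rVert_{L^q}$ and $\lVert\widehat{ag}\rVert_{B^1_{q,1}}\leqslant 2R^2\lVert\widehat a\rVert_{L^1}\lVert\widehat g\rVert_{B^{-1}_{q,\infty}}$ for $a$ supported in $B(0,R)$. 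These reduce everything to checking $\mathscr F\{\Psi_2\varphi\}\in L^1\cap L^\infty$, which is where the hypothesis $s>n/r$ is actually used (via the argument in Lemma~3.4 of \cite{Hu--Salo--Vesalainen}). Your heuristic about $\widehat{\chi_{\mathcal C}}$ decaying like $|\xi|^{-1}$ transversally to faces captures the right intuition, but the clean way through is this iterated half-space argument rather than a direct convolution computation.
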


\begin{proof}
The polyhedral cone $\mathcal C$ can be expressed as a finite intersection of half-spaces. Hence
\[
  \chi_{\mathcal C} = \prod_{j=1}^m \chi_{H_j}
\]
for a set of half-spaces $H_j$. We can write $Q = \Psi_1 + \prod_j \chi_{H_j} \Psi_2 \varphi$ for some $\Psi_1, \Psi_2 \in C^\infty_{\mathrm c}(U)$.

\smallskip
Consider the case~\ref{caseSob}) first. The proof is more or less the same as in Lemma 3.4 of \cite{Hu--Salo--Vesalainen} and based on estimates in \cite{Blasten--Paivarinta--Sylvester}. In particular note that if $\supp a \subset B(0, R)$ then
\begin{align*}
  \norm{\widehat a}_{B^1_{q,1}} &\leqslant C R \norm{\widehat a}_{L^q},\\
	\norm{\widehat{ag}}_{B^1_{q,1}} &\leqslant 2 R^2 \norm{\widehat a}_{L^1} \norm{\widehat g}_{B^{-1}_{q,\infty}},\\
	\norm{\mathscr{F}\{\chi_{H_j} a\}}_{B^1_{q,1}} &\leqslant C \norm{\widehat a}_{B^1_{q,1}}.
\end{align*}
The first one of these implies that the Fourier transforms of $C^\infty_{\mathrm c}(U)$-functions are in the Besov space $B^1_{q,1}$. Thus the first and last imply that $\mathscr{F} Q \in B^1_{q,1}$ if $\mathscr{F}\{\Psi_2 \varphi\} \in L^q$. Similarly, the second and last one imply the required mapping properties for $Q$ if $\mathscr{F}\{\Psi_2 \varphi\} \in L^1$.

Note that $\Psi_2 \varphi \in H^{s,r}$ for some $r \geqslant 1$ and $s>n/r$. Then the last part in the proof of Lemma 3.4 in \cite{Hu--Salo--Vesalainen} --- which uses a dyadic partition of unity and the H\"older and Hausdorff--Young inequalities --- implies that $\mathscr{F}\{\Psi_2 \varphi\} \in L^1 \cap L^\infty$, from which the claim follows.

Consider the case~\ref{caseHold}) now. According to Triebel \cite{Triebel1}, first theorem in Section 2.8.7, the mapping $f \mapsto \chi_{H_j} f$ is continuous in $H^{s,q}(\R^n)$ when $1 \leqslant q < \infty$, $s \geqslant 0$ and $sq < 1$. We have $q \leqslant 2$, so the previous sentence and the compact support of $\Psi_1$ imply that
\[
  \norm{Q f}_{H^{s,q}(\R^n)} \leqslant C (\norm{f}_{H^{s,q'}(\R^n)} + \norm{\Psi_2 \varphi f}_{H^{s,q}(\R^n)})
\]
for $1/q + 1/q'=1$, $s<1/q$.
By Triebel \cite{Triebel2}, last corollary in Section 4.2.2, $C^\alpha(\R^n)$ is a multiplier for $H^{s,q}(\R^n)$ if $s<\alpha$. So
\[
  \norm{\Psi_2 \varphi f}_{H^{s,q}(\R^n)} \leqslant C \norm{\Psi_2 f}_{H^{s,q}(\R^n)} \leqslant C \norm{f}_{H^{s,q'}(\R^n)}
\]
when $s<\alpha$. The claim follows by combining both of these estimates.
\end{proof}

The following proposition gives existence of the complex geometrical optics solutions in the coordinate patch $U \subseteq \R^n$ where we assume that $V$ has a special form.

\begin{proposition}
\label{CGOinM}
Let $n\in\mathbb Z_+$ with $n\geqslant2$ and let $\mathcal C \subset \R^n$ be an open polyhedral cone.
Consider the potential $V = \chi_{\mathcal C} \varphi$ where $\varphi \colon \R^n\longrightarrow\C$ is compactly supported and in a function space $X$, where
\begin{enumerate}[{case} i)]
	\item $n=2$ and $X = C^\alpha(\R^n)$ for some $\alpha\in\mathbb R_+$, or\label{caseN2}
	\item $n=3$ and $X = C^\alpha(\R^n)$ for some $\alpha\in\left]1/4,\infty\right[$, or\label{caseN3}
	\item $n\geqslant2$ and $X = H^{s,r}(\R^n)$ for some $r\in\left[1,\infty\right[$ and $s\in\left]n/r,\infty\right[$.\label{caseQb}
\end{enumerate}
Let $U \subseteq \R^n$ be a non-empty open set and let $K\colon U\longrightarrow\R_+$ be smooth.

Let $H_K$ be the free operator of \eqref{HKformula}, $\nu \in \{0,1\}$ and $\lambda \in\mathbb R_+$.
Let $B$ be a bounded non-empty open set such that $\overline{B} \subset U$. 
There is a constant $s_0\in\mathbb R_+$ such that the equation
\[
  (H_K + \lambda^\nu\, V - \lambda)u_0 = 0
\]
has a complex geometrical optics solution $u_0\in H^2(B)$,
\[
  u_0(x) = K(x)^{(n-2)/2}\,\exp(\rho\cdot x)\,(1+\psi(x)),
\]
if $\rho\in\C^n$ is a complex vector satisfying $\rho\cdot\rho=0$ and $\abs{\rho} > s_0$.

In the cases~\ref{caseN2}) and~\ref{caseN3}) there is $p \in [6,\infty[$ for which $\psi|_{B} \in L^p(B)$ with
\[
  \norm{\psi}_{L^p(B)} \leqslant C \abs{\rho}^{-n/p-\delta}
\]
for some $\delta = \delta(n,\alpha) \in\mathbb R_+$. In the case~\ref{caseQb}) we may choose any $p\in\left[2,\infty\right[$ and have the estimate
\[
  \norm{\psi}_{L^p(B)} \leqslant C \abs{\rho}^{-1}.
\]
In both cases $\norm{\psi}_{H^2(B)} \leqslant C \abs{\rho}^2$.
\end{proposition}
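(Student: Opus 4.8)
The plan is to carry out the classical Sylvester--Uhlmann construction of complex geometrical optics solutions, in the refined form of \cite{Blasten--Paivarinta--Sylvester, Paivarinta--Salo--Vesalainen, Hu--Salo--Vesalainen} which tolerates the rough potential $\chi_{\mathcal C}\varphi$. First I would flatten the problem: pick a cut-off $\Phi\in C^\infty_{\mathrm c}(U)$ with $\Phi\equiv1$ on a neighbourhood of $\overline B$ and apply Lemma~\ref{conjugation-lemma} with $u_0=K^{(n-2)/2}u$. Because $\Phi\equiv1$ near $\overline B$, on $B$ the equation $(H_K+\lambda^\nu V-\lambda)u_0=0$ is equivalent to $(-\Delta+Q)u=0$ with $Q$ the globally defined, compactly supported potential of Lemma~\ref{potEstims}, whose mapping properties we have already recorded. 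Hence it suffices to solve $(-\Delta+Q)u=0$ on all of $\R^n$ and then restrict to $B$.

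Next, inserting the ansatz $u=e^{\rho\cdot x}(1+\psi)$ with $\rho\cdot\rho=0$ and using $\Delta e^{\rho\cdot x}=(\rho\cdot\rho)\,e^{\rho\cdot x}=0$, the equation becomes
\[
(-\Delta-2\rho\cdot\nabla)\,\psi=-Q-Q\psi .
\]
Let $G_\rho$ denote the Faddeev Green operator, the Fourier multiplier with symbol $(\abs{\xi}^2-2i\rho\cdot\xi)^{-1}$, which inverts $-\Delta-2\rho\cdot\nabla$. Then $\psi$ is obtained as the fixed point of $\psi\mapsto-G_\rho Q-G_\rho(Q\psi)$, equivalently by inverting $I+G_\rho(Q\,\cdot\,)$ and applying it to $-G_\rho Q$.

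The main point, and the main obstacle, is to show that $G_\rho(Q\,\cdot\,)$ is a contraction of norm $O(\abs{\rho}^{-\delta})$ on a Banach space $Z$ tailored to each case, so that the Neumann series converges whenever $\abs{\rho}>s_0$. Here Lemma~\ref{potEstims} supplies the $\abs{\rho}$-independent mapping properties of multiplication by $Q$: boundedness $H^{s,q'}(\R^n)\to H^{s,q}(\R^n)$ in cases~\ref{caseN2}) and~\ref{caseN3}), with $Z=H^{s,q'}(\R^n)$, and the Besov estimate taking $\widehat g\in B^{-1}_{q,\infty}$ to $\mathscr{F}\{Qg\}\in B^1_{q,1}$ in case~\ref{caseQb}), with $Z=\{g:\widehat g\in B^{-1}_{q,\infty}\}$. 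Combining these with the standard $\abs{\rho}$-gain estimates for $G_\rho$ on the same scales --- a Kenig--Ruiz--Sogge type bound $\norm{G_\rho f}_{L^{p}}\lesssim\abs{\rho}^{-1}\norm{f}_{L^{p'}}$ and its $H^{s,\cdot}$ analogue in the H\"older cases, and the bound for the multiplier $(\abs{\xi}^2-2i\rho\cdot\xi)^{-1}$ sending $B^1_{q,1}$-transforms to $B^{-1}_{q,\infty}$-transforms with gain $\abs{\rho}^{-1}$ in case~\ref{caseQb}) --- gives $\norm{G_\rho(Q\,\cdot\,)}_{Z\to Z}\lesssim\abs{\rho}^{-\delta}$. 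The dimensional and regularity restrictions ($n\le3$ and $\alpha>1/4$ in the H\"older cases, $s>n/r$ in case~\ref{caseQb})) are exactly what forces the net power of $\abs{\rho}$ in this composition to be negative; the careful Fourier-analytic bookkeeping behind these estimates, carried out in the cited works, is where the real work lies. The source term $-G_\rho Q$ lies in $Z$ by the first assertions of Lemma~\ref{potEstims} ($Q\in H^{s,q}$, respectively $\widehat Q\in B^1_{q,1}$), and once the fixed point exists it satisfies $\norm{\psi}_Z\lesssim\norm{G_\rho Q}_Z\lesssim\abs{\rho}^{-1}$.

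Finally I would convert these norms into the stated conclusions. In cases~\ref{caseN2}) and~\ref{caseN3}), $\psi\in H^{s,q'}(\R^n)$; Sobolev embedding puts it in $L^p(B)$ for a suitable $p\in[6,\infty[$ (the value $6$ being forced already when $n=2$), and keeping track of the $\abs{\rho}$-scaling of the Green-operator estimates --- in particular the Jacobian factor $\abs{\rho}^{-n/p}$ appearing upon localizing to the fixed ball $B$ --- yields $\norm{\psi}_{L^p(B)}\lesssim\abs{\rho}^{-n/p-\delta}$ for some $\delta=\delta(n,\alpha)>0$. In case~\ref{caseQb}) one reads off $\norm{\psi}_{L^p(B)}\lesssim\abs{\rho}^{-1}$ for every $p\in[2,\infty[$ directly. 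The $H^2(B)$ bound in all cases follows by a bootstrap from the equation: $\psi$ solves $-\Delta\psi=2\rho\cdot\nabla\psi-Q(1+\psi)$ on $\R^n$, $Q$ is bounded with compact support, so interior elliptic regularity on a ball slightly larger than $B$ gives $\norm{\psi}_{H^2(B)}\lesssim\abs{\rho}\,\norm{\nabla\psi}_{L^2}+\norm{Q(1+\psi)}_{L^2}$ there, and the interpolation $\norm{\nabla\psi}_{L^2}\lesssim\norm{\psi}_{H^2}^{1/2}\norm{\psi}_{L^2}^{1/2}$ together with $\norm{\psi}_{L^2}\lesssim1$ absorbs the top-order term and leaves $\norm{\psi}_{H^2(B)}\lesssim\abs{\rho}^2$. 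Setting $u_0=K^{(n-2)/2}e^{\rho\cdot x}(1+\psi)$ then finishes the proof.
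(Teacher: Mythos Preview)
Your overall strategy matches the paper's: conjugate via Lemma~\ref{conjugation-lemma}, cut off to produce the global compactly supported potential $Q$ of Lemma~\ref{potEstims}, reduce to $(-\Delta-2\rho\cdot\nabla+Q)\psi=-Q$, and close a fixed-point argument using the Faddeev operator estimates from \cite{Blasten--Paivarinta--Sylvester} in case~\ref{caseQb}) and from \cite{Paivarinta--Salo--Vesalainen} in cases~\ref{caseN2}) and~\ref{caseN3}). Your treatment of case~\ref{caseQb}) and the $H^2$ bootstrap are essentially what the paper does.

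There is, however, a concrete error in your account of cases~\ref{caseN2}) and~\ref{caseN3}). You state that the Kenig--Ruiz--Sogge bound reads $\norm{G_\rho f}_{L^p}\lesssim\abs{\rho}^{-1}\norm{f}_{L^{p'}}$, and that the additional factor $\abs{\rho}^{-n/p}$ is a ``Jacobian factor\dots\ appearing upon localizing to the fixed ball $B$''. Neither is right: restricting a global $L^p$-norm to a fixed ball yields no power of $\abs{\rho}$, and the estimate actually used (Proposition~3.3 of \cite{Paivarinta--Salo--Vesalainen}) gives
\[
\norm{\psi}_{H^{s,q'}(\R^n)}\lesssim\abs{\rho}^{\,n(1/q-1/q')-2}\norm{Q}_{H^{s,q}(\R^n)},
\]
with the exponent lying in $\bigl[-2/(n+1),0\bigr[$ for admissible $q$, never $-1$. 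The exponent $-n/p-\delta$ arises because Sobolev embedding $H^{s,q'}\hookrightarrow L^p$ fixes $-n/p=s-n/q'$, and one must then \emph{choose} $(q,s)$ inside the region allowed by Lemma~\ref{potEstims} so that additionally $n(1/q-1/q')-2<-n/p$, i.e.\ $s>n/q-2$. The paper carries out this index computation explicitly: in $n=2$ the choice $(1/q,s)=(5/6,0)$ gives $p=6$ and $\delta=1/3$; in $n=3$ one must take $1/q=3/4$ and $1/4<s<\min(3/4,\alpha)$, which is precisely why the hypothesis $\alpha>1/4$ is needed. Without this bookkeeping the claimed bound $\norm{\psi}_{L^p(B)}\lesssim\abs{\rho}^{-n/p-\delta}$ is not established, and your heuristic for it would not produce a proof.
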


\begin{proof}
Use the ansatz $u_0(x) = K(x)^{(n-2)/{2}} u(x)$. Lemma~\ref{conjugation-lemma} gives
\begin{align*}
&K^{-(n+2)/{2}}(H_K + \lambda^\nu \,V - \lambda) u_0 \\
	&\qquad= -\Delta u + \frac{(n-2)(n\abs{\nabla K}^2 - 2K\Delta K) - (n-1)^2 + 4\,\lambda^\nu\, V - 4\lambda}{4\,K^2}\, u
\end{align*}
in $U$.

Since $K$ is smooth and positive, and $\overline{B}$ is compact, it has a positive lower bound there and the function $1/K$ is smooth. Let $\Phi \in C^\infty_{\mathrm c}(U)$ be constant $1$ in $B$.
Denote by $Q \colon \R^n \longrightarrow \C$ the extension of
\[
  Q = \Phi\, \frac{(n-2)(n\abs{\nabla K}^2 - 2K\,\Delta K) - (n-1)^2 + 4\,\lambda^\nu\, V - 4\lambda}{4\,K^2}
\]
by zero to $\R^n$. We will build a complex geometrical optics solution to the equation $(-\Delta + Q)u=0$ in $\R^n$ and then restrict $u$ to $B$. In this set it is a solution to the original equation since $\Phi\equiv1$ there.

Let us show that the equation $(-\Delta + Q) u = 0$ has a solution $u(x) = e^{\rho\cdot x}(1 + \psi(x))$ in $\R^n$. Equivalently, let us solve for
\[
  (-\Delta + 2\rho\cdot\nabla + Q)\psi = -Q
\]
in $\R^n$. Note that $Q$ satisfies the requirements of Lemma~\ref{potEstims}.

The case~\ref{caseQb}) follows from \cite{Blasten--Paivarinta--Sylvester}. According to the case~\ref{caseSob}) in Lemma~\ref{potEstims} we have
\[
  Q \in \widehat{B^1_{q,1}} \qquad \text{and} \qquad \norm{ Q g }_{\widehat{B^1_{q,1}}} \leqslant C \norm{g}_{\widehat{B^{-1}_{q,\infty}}}
\]
for any $q \in\left]1,\infty\right[$. Here we used the notation from that same article: $\widehat{B^s_{p,q}} = \mathscr{F} B^s_{p,q}$ are Fourier transforms of Besov spaces with domain $\R^n$. In \cite{Blasten--Paivarinta--Sylvester}, Proposition 4.2, it was showed that the Faddeev operator maps
\[
  (-\Delta + 2\rho\cdot\nabla)^{-1} \colon \widehat{B^1_{q,1}} \longrightarrow \widehat{B^{-1}_{q,\infty}}
\]
with norm estimate $C \abs{\rho}^{-1}$. This implies the existence of a solution $\psi \in \mathscr{F} B^{-1}_{q,\infty}$. But choosing $q = (1-1/p)^{-1} \leqslant 2$, and using the embedding $\mathscr{F} B^{-1}_{q,\infty} \hookrightarrow L^p(B)$ implies that $\psi_{|B} \in L^p(B)$ with norm at most $C \abs{\rho}^{-1}$. 

The cases~\ref{caseN2}) and~\ref{caseN3}) follow from Proposition 3.3 in \cite{Paivarinta--Salo--Vesalainen} and the proof of Theorem 3.1 therein which uses an estimate from \cite{Kenig--Ruiz--Sogge}. More details follow to make the index calculations in the proof of the former clearer. Proposition 3.3 in \cite{Paivarinta--Salo--Vesalainen} says that if $1<q<2$, $1/q+1/q'=1$,
\[
  \frac{1}{q}-\frac{1}{q'} \in \left[ \frac{2}{n+1}, \frac{2}{n} \right[,
\]
$Q$ satisfies \eqref{QSobMult}, and $\abs{\rho}$ is large enough, then there is a solution $\psi \in H^{s,q'}(\R^n)$ which satisfies
\[
  \norm{\psi}_{H^{s,q'}(\R^n)} \leqslant C \abs{\Im\rho}^{n(1/q-1/q')-2} \norm{Q}_{H^{s,q}(\R^n)}.
\]
The slight differences in notation between their estimate and ours follows from our choice of having $\rho\cdot\nabla$ instead of $\rho\cdot D$. Hence the upper bound has $\Im\rho$ instead of $\Re\rho$. Moreover $\sqrt{2}\abs{\Im\rho} = \abs{\rho}$ since $\rho\cdot\rho=0$.

The Sobolev embedding theorem says that $H^{s,q'}(\R^n) \hookrightarrow L^p(\R^n)$ if $0\leqslant s<n/q'$ and then $p$ is determined by $-n/p = s-n/q'$. The claim follows after making sure that all the constraints for the parameters used above are satisfied. Namely
\begin{itemize}
\item having \eqref{QSobMult} requires $1 \leqslant q \leqslant 2$ and
  $0 \leqslant s < \min(1/q, \alpha)$,
\item using Proposition~3.3 in \cite{Paivarinta--Salo--Vesalainen}
  requires $1<q<2$, $1/q+1/q'=1$ and $2/(n+1) \leqslant 1/q-1/q' <
  2/n$,
\item using the Sobolev embedding to $L^p(\R^n)$ requires $0\leqslant
  s < n/q'$ and $p$ is given by $-n/p = s-n/q'$,
\item and finally, for having enough decay in the exponent of
  $\abs{\rho}$ in the final estimate, we require that $n(1/q-1/q')-2 <
  -n/p$
\end{itemize}
We omitted the requirement that $p\geqslant 6$ because it will be implied by these. Checking the conditions becomes easier by writing everything with respect to $s$ and $1/q$. With a little effort one can see that the above are equivalent to
\begin{align*}
  &\frac{n+3}{2n+2} \leqslant \frac{1}{q} < \frac{n+2}{2n}, \qquad
	&&0 \leqslant s < \min\left(\frac{1}{q}, \alpha\right),\\
	&0 \leqslant s < n-\frac{n}{q}, \qquad
	&&\frac{n}{q} - 2 < s,
\end{align*}
after which $q'$ is determined by $1/q+1/q'=1$ and $p$ is given by
$-n/p = s-n/q'$. The conditions can be illustrated as follows, where
we leave out the requirement $s<\alpha$ which will have to be checked
separately.
\begin{center}
\includegraphics{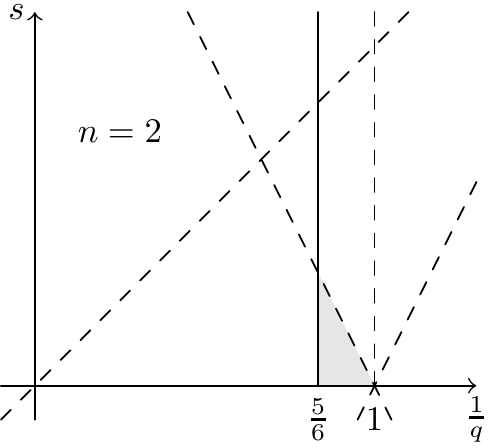} \quad \includegraphics{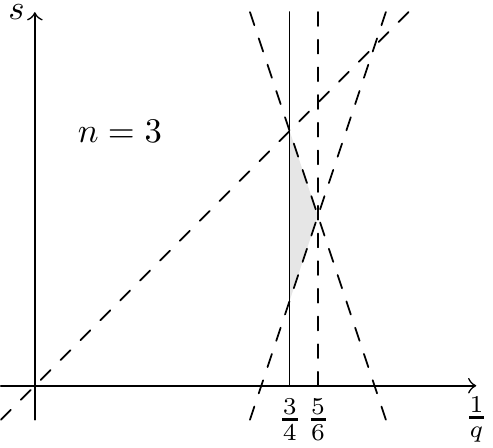}
\end{center}

Adding the condition $s<\alpha$, we see that the above have a solution in 2D if $\alpha>0$, and in 3D if $\alpha>1/4$. In the former case a solution is $(1/q,s)=(5/6,0)$ which gives $1/q' = 1/6$, $n/p = n/q' - s= 1/3$ and $p = (n/p)^{-1}n = 6$. The exponent of $\abs{\rho}$ becomes then
\[
n\left(\frac1q - \frac1{q'}\right) - 2 = 2\left( \frac56 - \frac16
\right) - 2 = - \frac23 = -\frac26 - \frac13 = -\frac{n}{p} - \frac13
\]
which is of the form $-n/p-\delta$, with $\delta=1/3>0$.

In the second case a solution is given by $(1/q,s) = (3/4, s)$ for any $s$ satisfying $1/4 < s < \min(3/4,\alpha)$. Then $1/q' = 1/4$, $n/p = n/q'-s = 3/4-s$ and $p = (n/p)^{-1}n = 12/(3-4s)$, a number between $6$ and infinity depending on $s$. The exponent of $\abs{\rho}$ is
\[
n\left(\frac1q - \frac1{q'}\right) - 2 = 3\left(\frac34 -
\frac14\right)-2 = -\frac12 = s - \frac34 + \frac14 - s = -\frac{n}{p}
+ \frac14-s,
\]
again of the form $-n/p-\delta$, where now $\delta=s-1/4>0$ for any $s$ satisfying the only required condition of $1/4 < s < \min(3/4,\alpha)$. We can choose $s$ based on $\alpha$, for example $s = (1/4+\min(3/4,\alpha))/2$, which suits us well.

We have shown that in all cases there is a solution $\psi \in L^p(B)$
for some $p$, and it has fast enough decay as
$\abs{\rho}\to\infty$. We also note that above it was always possible
to choose $p\geqslant 2$. Hence in all cases we have $\psi \in
L^2(B')$ for any bounded domain $B' \supset \supp \Phi$. Elliptic
interior regularity implies that $\psi_{|B} \in H^2(B)$, and the
required norm estimate for $u_0$ follows.
\end{proof}

We note further that the distance from the line with slope $n$
determines the decay rate of $\norm{\psi}_p$. For the fastest decay
one should have $\alpha \geqslant 1/3$ in 2D and $\alpha \geqslant
3/4$ in 3D.

\section{Corner scattering} \label{cornerScat}

In this section we will prove that corners that are conformal to Euclidean angles in two dimensions or the corner of a Euclidean hypercube in higher dimensions always scatter. The proof proceeds as follows: Assuming the existence of a nonscattering energy --- a nontrivial incident wave such that the corresponding scattering amplitude vanishes --- we use Rellich's theorem to show that the scattered wave is zero outside the support of the potential. Integration by parts gives us a type of orthogonality relation. This is a relation between the potential function, the incident wave and a function which we may choose.

Choosing a family of complex geometrical optics solutions in the orthogonality relation will be useful. The remaining steps involve choosing a suitable set of coordinates for the hyperbolic space under which the corner of interest looks like a Euclidean corner, and then estimating the decay rates of various terms involved in the orthogonality relation. This reduces the problem to showing that the Laplace transform of the product of the characteristic function of a cone and a nontrivial harmonic polynomial cannot vanish identically on the complex characteristic manifold of points $\rho\in\mathbb C^n$ satisfying $\rho\cdot\rho=0$, a problem which has been solved in the papers \cite{Blasten--Paivarinta--Sylvester} and \cite{Paivarinta--Salo--Vesalainen}.

\subsection{An orthogonality relation}

The first step in the proof is to derive a near-orthogonality relation from the hypothesis that a non-scattering energy exists. The following lemma gives that. If the neighbourhood $B$ is large enough to contain the support of the potential $V$ then the right-hand side vanishes, and thus $u_0$ and $w$ are orthogonal, with inner products weighted by $V\,\mathrm d\mu$.

Compared to strict orthogonality, the identity stated below is more useful when the potential lacks smoothness outside of $B$. We will later let $u_0$ be a complex geometrical optics solution whose boundary values decay exponentially as the relevant parameter tends to infinity, so this relation is as useful to us as orthogonality is.
\begin{lemma}
\label{orthogonalityRelationLemma}
Let $n\geqslant2$ be an integer, let $\lambda\in\mathbb R_+$, and let $\Omega$ be a bounded non-empty open set in $\mathbb H^n$ such that the interior of its complement is connected. Furthermore, let $V\in L^\infty(\mathbb H^n)$ vanish outside $\Omega$, and let $v,w\in B^\ast(\mathbb H^n)$ be solutions to the equations
\[\begin{cases}\left(H_0+V-\lambda\right)v=0,\text{\ and}\\
\left(H_0-\lambda\right)w=0,\end{cases}\]
in $\mathbb H^n$ so that $v-w\in\mathring B^\ast(\mathbb H^n)$. 

Let $B \subset \mathbb H^n$ be a smooth bounded domain and let $u_0\in H^2(B)$ be a solution to
\[\left(H_0+V-\lambda\right)u_0=0\]
in $B$. Then $v,w\in H^2_{\mathrm{loc}}(\mathbb H^n)$ and
\begin{equation}\label{H0ortho} \int\limits_{B \cap \Omega}V\,u_0\,w\,\mathrm d\mu = \int\limits_{\partial B} \big( (v-w)\, \partial_\nu u_0 - u_0 \, \partial_\nu(v-w) \big)\, \mathrm d\sigma \end{equation}
with $v-w$ and $\partial_\nu(v-w)$ vanishing identically on $\partial B \setminus \Omega$. 
In the upper half-space coordinates of $\mathbb H^n$ we have $\partial_\nu = x_n\, \partial_N$ where $N(x)$ is Euclidean unit exterior normal vector to $B$ at $x\in\partial B$, and $\mathrm d\mu = x_n^{-n}\,\mathrm dx$, $\mathrm d\sigma = x_n^{n-1}\,\mathrm dS$ where $\mathrm dx$ and $\mathrm dS$ are the Euclidean volume and boundary volume forms of $B$.
\end{lemma}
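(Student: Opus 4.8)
The plan is to establish the regularity claims first and then derive the orthogonality identity by a straightforward integration-by-parts argument, exploiting the hypotheses carefully at the two places where they matter: the interior regularity coming from elliptic theory, and the vanishing of $v-w$ outside $\Omega$ coming from Rellich's theorem together with unique continuation.

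\textbf{Step 1: interior $H^2_{\mathrm{loc}}$ regularity of $v$ and $w$.} Both $v$ and $w$ solve second-order elliptic equations with smooth (indeed real-analytic) coefficients: $(H_0-\lambda)w=0$ has smooth coefficients on all of $\mathbb H^n$, and $(H_0+V-\lambda)v=0$ has coefficients in $L^\infty_{\mathrm{loc}}$ since $V\in L^\infty$. Starting from $v,w\in B^\ast(\mathbb H^n)\subset L^2_{\mathrm{loc}}(\mathbb H^n;\mathrm d\mu)$, standard elliptic interior regularity (bootstrapping from $H^0_{\mathrm{loc}}$ to $H^2_{\mathrm{loc}}$ for an equation whose top-order part is $-x_n^2\Delta$, which is uniformly elliptic on compact subsets of the half-space) yields $v,w\in H^2_{\mathrm{loc}}(\mathbb H^n)$; this is where we use that the zeroth-order coefficient $V$ is merely $L^\infty$, so we cannot go further, but $H^2_{\mathrm{loc}}$ suffices. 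In particular $v-w$ and its normal derivative have well-defined traces on $\partial B$ in the sense of $H^{3/2}(\partial B)$ and $H^{1/2}(\partial B)$ respectively, and $u_0\in H^2(B)$ likewise has such traces, so all boundary integrals in \eqref{H0ortho} make sense.

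\textbf{Step 2: vanishing of $v-w$ outside $\Omega$.} Since $V$ vanishes outside $\Omega$, both $v$ and $w$ solve $(H_0-\lambda)\cdot=0$ in the (connected) interior of $\mathbb H^n\setminus\Omega$, hence so does $u:=v-w$. By hypothesis $u=v-w\in\mathring B^\ast(\mathbb H^n)$, i.e.\ the far-field pattern vanishes; the hyperbolic Rellich-type uniqueness theorem (Theorem 2.10 in \cite{Isozaki--Kurylev}) then forces $u$ to vanish in a neighbourhood of the ideal boundary, and unique continuation for the elliptic operator $H_0-\lambda$ propagates this to $u\equiv0$ on the whole connected interior of $\mathbb H^n\setminus\Omega$. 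In particular $v-w$ and $\partial_\nu(v-w)$ vanish on $\partial B\setminus\overline\Omega$; a density/continuity argument extends the vanishing of the traces to $\partial B\setminus\Omega$ (or one simply restricts attention to the open portion $\partial B\setminus\overline\Omega$, which is what is used later). This also reduces the right-hand side of \eqref{H0ortho} to an integral over $\partial B\cap\Omega$.

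\textbf{Step 3: Green's identity on $B$.} On the domain $B$ apply the second Green identity for the operator $H_0$ to the pair $(u_0,\,v-w)$. Writing $H_0=-\Delta_K-(n-1)^2/4$ with $K(x)=x_n$, the Laplace--Beltrami operator $-\Delta_K$ is formally self-adjoint with respect to $\mathrm d\mu=x_n^{-n}\,\mathrm dx$, so
\[
\int_B\bigl((v-w)\,H_0 u_0-u_0\,H_0(v-w)\bigr)\,\mathrm d\mu
=\int_{\partial B}\bigl((v-w)\,\partial_\nu u_0-u_0\,\partial_\nu(v-w)\bigr)\,\mathrm d\sigma,
\]
where $\partial_\nu$ is the outward conormal derivative for $\Delta_K$; in half-space coordinates a direct computation gives $\partial_\nu=x_n\,\partial_N$ and $\mathrm d\sigma=x_n^{n-1}\,\mathrm dS$, consistent with the transformation $\mathrm d\mu=x_n^{-n}\,\mathrm dx$. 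The Green identity is first justified for smooth functions on the smooth bounded domain $B$ and then extended to $u_0\in H^2(B)$ and $v-w\in H^2(B)$ (which holds since $v-w\in H^2_{\mathrm{loc}}(\mathbb H^n)$ and $\overline B$ is compact) by density of $C^\infty(\overline B)$ in $H^2(B)$. Now on $B$ we have $H_0u_0=(\lambda-V)u_0$ and $H_0(v-w)=H_0v-H_0w=(\lambda-V)v-\lambda w=(\lambda-V)(v-w)-Vw$; subtracting, the $(\lambda-V)$-terms cancel and the bulk integrand collapses to $u_0\,Vw$. Since $V$ is supported in $\Omega$, the left-hand side becomes $\int_{B\cap\Omega}V\,u_0\,w\,\mathrm d\mu$, which is exactly \eqref{H0ortho}.

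\textbf{Main obstacle.} The only genuinely delicate point is Step 2: invoking the hyperbolic Rellich theorem in the correct functional framework ($\mathring B^\ast(\mathbb H^n)$ versus the $L^2$-far-field formulation) and then using unique continuation across the connected exterior, together with making sure the trace of $v-w$ really vanishes on the relevant part of $\partial B$. The regularity bootstrap in Step 1 and the Green identity in Step 3 are routine once one keeps track of the conformal weights $x_n$; the identity $\mathrm d\sigma=x_n^{n-1}\,\mathrm dS$ and $\partial_\nu=x_n\partial_N$ should be recorded explicitly since they are used repeatedly in the corner-scattering estimates that follow.
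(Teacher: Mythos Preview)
Your proposal is correct and follows essentially the same approach as the paper: Rellich plus unique continuation to get $v\equiv w$ outside $\Omega$, elliptic interior regularity for $H^2_{\mathrm{loc}}$, and then Green's identity for $H_0$ applied to the pair $(u_0,v-w)$ with the equations used to collapse the bulk term to $\int_{B\cap\Omega}V\,u_0\,w\,\mathrm d\mu$. The only cosmetic difference is that the paper writes out the integration-by-parts formula for $H_0$ explicitly in half-space coordinates before applying it, whereas you invoke the Laplace--Beltrami Green identity abstractly; the underlying computation is identical.
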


\begin{proof}
Rellich's classical lemma has a natural analogue in $\mathbb H^n$ (see e.g.\ Theorem 2.10 in \cite{Isozaki--Kurylev} for a more than sufficiently general version). In particular, we have $v\equiv w$ far away, and by unique continuation $v\equiv w$ outside $\Omega$. Elliptic interior regularity estimates imply that $v$ and $w$ are locally in $H^2$ since $V$ is bounded. Hence $v-w$ and $\partial_\nu(v-w)$ exist as $L^2$ functions on $\partial B$ and they both vanish on $\partial B \setminus \Omega$.

Note the integration by parts formula for $H_0$,
\begin{equation}\label{integration-by-parts-for-H0}
  \int\limits_B u\, H_0 v\, \mathrm d\mu = \int\limits_{\partial B} \big( u\, \partial_\nu v - v\, \partial_\nu u \big) \,\mathrm d\sigma + \int\limits_{B} v\, H_0 u\, \mathrm d\mu,
\end{equation}
which follows from
\begin{align*}
  &\int\limits_{B} u \,\bigl(-x_n^2\,\Delta + (n-2)\,x_n \,\partial_n\bigr)\,v \,\frac{\mathrm dx}{x_n^n}\\
  &\qquad = - \int\limits_B u \,\Delta v\, \frac{\mathrm dx}{x_n^{n-2}} + (n-2) \int\limits_B u\, \partial_n v \,\frac{\mathrm dx}{x_n^{n-1}} \\
  &\qquad = -\int\limits_{\partial B} u \,N\cdot\nabla v \,\frac{\mathrm dS}{x_n^{n-2}} + \int\limits_B \nabla v \cdot \nabla \frac{u}{x_n^{n-2}} \,dx + (n-2) \int\limits_B u\, \partial_n v\, \frac{\mathrm dx}{x_n^{n-1}} \\
  &\qquad = -\int\limits_{\partial B} u\, x_n\, N\cdot\nabla v\, \frac{\mathrm dS}{x_n^{n-1}} + \int\limits_B \nabla v\cdot\nabla u\, \frac{\mathrm dx}{x_n^{n-2}} \\
  &\qquad \quad -(n-2)\int\limits_B \nabla v \cdot u\, e_n\, \frac{\mathrm dx}{x_n^{n-1}} + (n-2)\int\limits_B u\, \partial_n v\, \frac{\mathrm dx}{x_n^{n-1}} \\
    &\qquad = -\int\limits_{\partial B} u\, x_n\, N\cdot\nabla v\, \frac{\mathrm dS}{x_n^{n-1}} + \int\limits_B \nabla v\cdot\nabla u\, \frac{\mathrm dx}{x_n^{n-2}},
    \end{align*} 
where the two terms with $n-2$ simply cancel out, and where the last integral is symmetric with respect to $u$ and $v$, so that we obtain
\begin{align*}
&\int\limits_{B} u \,\bigl(-x_n^2\,\Delta + (n-2)\,x_n \,\partial_n\bigr)\,v \,\frac{\mathrm dx}{x_n^n}+\int\limits_{\partial B} u\, x_n\, N\cdot\nabla v\, \frac{\mathrm dS}{x_n^{n-1}} \\
&\qquad=\int\limits_B \nabla v\cdot\nabla u\, \frac{\mathrm dx}{x_n^{n-2}}
=\int\limits_B \nabla u\cdot\nabla v\, \frac{\mathrm dx}{x_n^{n-2}}\\
&\qquad=\int\limits_{B} v \,\bigl(-x_n^2\,\Delta + (n-2)\,x_n \,\partial_n\bigr)\,u \,\frac{\mathrm dx}{x_n^n}+\int\limits_{\partial B} v\, x_n\, N\cdot\nabla u\, \frac{\mathrm dS}{x_n^{n-1}} .
\end{align*}
Using \eqref{integration-by-parts-for-H0}, we get
\begin{align*}
0&=\int\limits_{B} \bigl(\left(H_0+V-\lambda\right)u_0\bigr)  \left(v-w\right) \,\mathrm d\mu \\
& = \int\limits_{\partial B} \big( (v-w)\, \partial_\nu u_0 - u_0 \, \partial_\nu (v-w) \big)\, \mathrm d\sigma \\
& \qquad + \int\limits_{B} u_0\, \left(H_0+V-\lambda\right)\left(v-w\right) \,\mathrm d\mu \\
& = \int\limits_{\partial B} \big( (v-w)\, \partial_\nu u_0 - u_0 \, \partial_\nu (v-w) \big) \,\mathrm d\sigma - \int\limits_{B\cap\Omega} u_0\, V\, w \,\mathrm d\mu
\end{align*}
since $V\equiv0$ outside of $\Omega$. 
\end{proof}

\begin{corollary}\label{orthoInCoordsCorol}
Let $u_0$, $v$, $w$, $V$, $B$ and $\Omega$ be as in Lemma~\ref{orthogonalityRelationLemma} with variable denoted by $y\in\mathbb H^n$ instead of $x$. Let $U\subseteq \R^n$ be open, $V\subset\mathbb H^n$ be open with $B\subset V$, and $x:V\longrightarrow U$ a diffeomorphism. Assume that under these coordinates the pushforward of the hyperbolic metric at a point $y\in V$ is given at $x=x(y)\in U$ by the Riemannian metric
\[
  g_x(a,b) = \frac{1}{(K(x))^2} \sum_{i=1}^n a_i b_i
\]
for two vectors $a,b$ in the tangent space to $U$ at $x$ where $a_i,b_i$ are their orthonormal coordinates. Here moreover $K(x)\colon U\longrightarrow\mathbb R_+$ is assumed smooth. Then the hyperbolic volume form $\mathrm d\mu$ is $K^{-n}\, \mathrm dx$ in these coordinates, where $\mathrm dx$ is the Euclidean volume form. Moreover the identity \eqref{H0ortho} becomes
\begin{equation} \label{orthoInCoords}
  \int\limits_{\Omega \cap B}V\, u_0\,w\, \frac{\mathrm dx}{K^n} = \int\limits_{\partial B} \big( (v-w)\, \partial_N u_0 - u_0\, \partial_N (v-w) \big) \,\frac{\mathrm d S}{K^{n-2}}
\end{equation}
in these coordinates. The differences $v-w$ and $\partial_\nu(v-w)$ vanish identically on $\partial B \setminus \Omega$.
Here $N$ and $\mathrm d S$ are the Euclidean exterior unit normal vector and boundary measure on $\partial B$, respectively, and the pushforward versions of $B, \Omega \subset \mathbb H^n$ into $U$ are denoted with the same symbols.
\end{corollary}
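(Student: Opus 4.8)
The plan is to recognize that the identity \eqref{H0ortho} of Lemma~\ref{orthogonalityRelationLemma} is a statement about diffeomorphism-invariant objects: $\mathrm d\mu$ is the hyperbolic volume form, $\mathrm d\sigma$ is the hyperbolic surface measure induced on $\partial B$, and $\partial_\nu$ is the hyperbolic unit conormal derivative on $\partial B$, all of which transport canonically under the diffeomorphism $x$. Thus the proof reduces to rewriting these three ingredients in the given coordinates, in which by hypothesis the metric reads $g_x(a,b)=K^{-2}(x)\sum_i a_ib_i$, and substituting.

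First I would record the three elementary conversions, all of which are specializations of formulas appearing in the discussion preceding Lemma~\ref{conjugation-lemma}. Since $\abs{g_x}=K^{-2n}$, the hyperbolic volume form is $\mathrm d\mu=\sqrt{\abs{g_x}}\,\mathrm dx=K^{-n}\,\mathrm dx$. Next, the Euclidean outward unit normal $N$ on $\partial B$ has $g_x$-length $K^{-1}$, so the $g$-unit outward normal is $\nu_g=K\,N$; combining this with $\nabla_K f=K^2\,\nabla f$ gives $\partial_\nu f = g_x(\nabla_K f,\nu_g) = K^{-2}\cdot K^2\cdot K\,(N\cdot\nabla f) = K\,\partial_N f$, applied to $f=u_0$ and $f=v-w$. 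Finally, the metric that $g$ induces on the hypersurface $\partial B$ is $K^{-2}$ times the Euclidean induced metric, so its volume form is $\mathrm d\sigma = K^{1-n}\,\mathrm dS$. Each of these reduces to the formula quoted at the end of Lemma~\ref{orthogonalityRelationLemma} when $K=x_n$.

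Then I would substitute into \eqref{H0ortho}. The left side becomes $\int_{\Omega\cap B}V\,u_0\,w\,K^{-n}\,\mathrm dx$, and on the right the two factors of $K$ produced by the conormal derivatives combine with the $K^{1-n}$ from $\mathrm d\sigma$ to give $K^{2-n}$, so the right side becomes $\int_{\partial B}\big((v-w)\,\partial_N u_0 - u_0\,\partial_N(v-w)\big)\,K^{2-n}\,\mathrm dS$; this is precisely \eqref{orthoInCoords}. The assertion that $v-w$ and $\partial_\nu(v-w)$ vanish on $\partial B\setminus\Omega$ is a pointwise statement inherited verbatim from Lemma~\ref{orthogonalityRelationLemma}, and $\partial_\nu(v-w)=0$ is equivalent to $\partial_N(v-w)=0$ since the two differ by the nowhere-vanishing factor $K$. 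I would also remark that $x$, being smooth with $K$ bounded above and below on the relevant compact sets, induces an equivalence of Euclidean Sobolev norms, so that $u_0$ lies in $H^2$ of the image and $v,w$ in $H^2_{\mathrm{loc}}$ there, which makes all the boundary traces in \eqref{orthoInCoords} well defined.

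I do not expect a genuine obstacle here; the only point requiring care is to normalize the outward normal with respect to $g$ rather than the Euclidean metric, so that the powers of $K$ in $\partial_\nu$ and in $\mathrm d\sigma$ come out correctly. A slightly more invariant alternative would be to note that $H_0=-\Delta_K-(n-1)^2/4$ with $\Delta_K$ the Laplace--Beltrami operator, so that in the $x$-coordinates $u_0$, $v$, $w$ solve the same equations with $H_0$ replaced by $H_K$ of \eqref{HKformula}, and then to re-run Green's second identity \eqref{integration-by-parts-for-H0} directly in these coordinates; this yields \eqref{orthoInCoords} after the same bookkeeping.
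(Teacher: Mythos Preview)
Your proposal is correct and follows essentially the same approach as the paper: both proofs record the three conversions $\mathrm d\mu=K^{-n}\,\mathrm dx$, $\mathrm d\sigma=K^{1-n}\,\mathrm dS$, and $\partial_\nu=K\,\partial_N$ arising from the conformal factor and then substitute into \eqref{H0ortho}. Your write-up is somewhat more detailed (you verify $\partial_\nu f=g_x(\nabla_K f,\nu_g)=K\,\partial_N f$ explicitly and remark on Sobolev equivalence), whereas the paper simply states the three conversions and substitutes.
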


\begin{proof}
The given form of the Riemannian metric implies that
\[
  \mathrm d\mu = K^{-n}\, \mathrm dx, \quad \mathrm d\sigma = K^{-(n-1)} \,\mathrm dS, \quad \frac{\partial}{\partial \nu} = \frac{\partial}{K^{-1}\,\partial N}
\]
i.e. the hyperbolic volume of an infinitesimal set in $U$ is its Euclidean volume multiplied by $K^{-n}$. For boundary sets multiply by $K^{-(n-1)}$, and for lengths and vectors by $K^{-1}$. Then apply these to \eqref{H0ortho}.
\end{proof}

\subsection{From CGO solutions to Laplace transforms}
In this section we will prove a lemma that will bring together all the major players in corner scattering: complex geometrical optics solutions, the non-scattering wave, and the shape of the corner. This is the argument from \cite{Blasten--Paivarinta--Sylvester, Paivarinta--Salo--Vesalainen}. We start by stating what we mean by a function having a specified order at a point:

\begin{definition}
Let $\Omega \subseteq \R^n$ be open and let $x_0 \in \Omega$. Then $f \in L^1_{\mathrm{loc}}(\Omega)$ has \emph{order} $N\in\mathbb Z\cup\left\{+\infty,-\infty\right\}$ at $x_0$ if
\[
N = \sup\{ M \in \mathbb{Z} \mid \exists C_M \in\mathbb R_+: \text{$\abs{f(x)} \leqslant C_M \abs{x-x_0}^M$ for a.e.\ $x\in\Omega$ near $x_0$}\}.
\]
\end{definition}

\begin{lemma}\label{orderSplitLemma}
Let $\Omega \subseteq \R^n$ be open, let $x_0\in\Omega$ and let $f\colon\Omega\longrightarrow\C$ be smooth. Assume that $N\in\mathbb Z\cup\left\{\pm\infty\right\}$ is the order of $f$ at $x_0$. Then $N \geqslant 0$ and $\partial^\alpha f(x_0)=0$ whenever $\abs{\alpha}<N$. Also, if $M\in\mathbb Z_+$ is such that $\partial^\alpha f(x_0)=0$ for $\abs{\alpha}<M$, then $N \geqslant M$.

Moreover if $N<\infty$ then there is a homogeneous polynomial $P_N$ of degree $N$ with complex coefficients and $C\in\mathbb R_+$ such that
\[
\abs{f(x) - P_N(x-x_0)} \leqslant C \abs{x}^{N+1}
\]
for $x\in\Omega$ in a neighbourhood of $x_0$.
\end{lemma}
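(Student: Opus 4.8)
The plan is to deduce everything from Taylor's theorem with remainder for the smooth function $f$; after a translation we may assume $x_0 = 0$. Two preliminary observations: since $f$ is smooth it is bounded near $0$, so $\abs{f(x)} \leqslant C_0\abs{x}^0$ there and $M=0$ lies in the set defining $N$, whence $N \geqslant 0$ (in particular $N \neq -\infty$); and if $\abs{f(x)} \leqslant C_M\abs{x}^M$ holds a.e.\ near $0$ then, $f$ being continuous, it holds everywhere near $0$, and on the unit ball it forces $\abs{f(x)} \leqslant C_M\abs{x}^{M'}$ for every integer $M' \leqslant M$, so the admissible exponents form a down-set in $\mathbb Z$; consequently, when $N < \infty$ it is itself admissible and so is every integer $\leqslant N$.

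Next I would prove the implication: if $\partial^\alpha f(0) = 0$ for all $\abs{\alpha} < M$, then $N \geqslant M$. Expanding $f$ by Taylor's theorem around $0$ to order $M$ and discarding the vanishing terms of degree below $M$ gives, on a small ball,
\[
  f(x) = \sum_{\abs{\alpha}=M}\frac{\partial^\alpha f(0)}{\alpha!}\,x^\alpha + R(x), \qquad \abs{R(x)} \leqslant C\abs{x}^{M+1}.
\]
The displayed sum is $\leqslant C'\abs{x}^M$, and so is $R$ once $\abs{x}\leqslant 1$; hence $\abs{f(x)} \leqslant C''\abs{x}^M$ near $0$ and $N \geqslant M$. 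This is precisely the second assertion of the lemma.

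For the remaining part of the first assertion I argue by contradiction. Suppose some derivative of $f$ at $0$ is nonzero and let $m \geqslant 0$ be the smallest integer admitting $\alpha$ with $\abs{\alpha} = m$ and $\partial^\alpha f(0) \neq 0$; put $P_m(x) = \sum_{\abs{\alpha}=m}\frac{\partial^\alpha f(0)}{\alpha!}x^\alpha$, a nonzero homogeneous polynomial of degree $m$. If $m < N$, then $m+1 \leqslant N$, so by the preliminary remark $\abs{f(x)} \leqslant C\abs{x}^{m+1}$ near $0$; combining with the order-$m$ Taylor expansion $f = P_m + O(\abs{x}^{m+1})$ yields $\abs{P_m(x)} \leqslant C'\abs{x}^{m+1}$ near $0$. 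Writing $x = t\omega$ with $\abs{\omega} = 1$ and using homogeneity gives $\abs{P_m(\omega)} \leqslant C't \to 0$ as $t \to 0+$, so $P_m$ vanishes on the unit sphere and therefore identically --- contradicting $P_m \neq 0$. Hence $m \geqslant N$, i.e.\ $\partial^\alpha f(0) = 0$ for all $\abs{\alpha} < N$. Together with the previous paragraph applied at $M = m$ this also shows that $N = m$ when some derivative is nonzero, and $N = +\infty$ otherwise.

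Finally, when $N < \infty$ the step just completed gives $\partial^\alpha f(0) = 0$ for $\abs{\alpha} < N$, so with $P_N(x) = \sum_{\abs{\alpha}=N}\frac{\partial^\alpha f(0)}{\alpha!}x^\alpha$ (homogeneous of degree $N$, and nonzero by the identification $N = m$) Taylor's theorem to order $N$ gives directly $\abs{f(x) - P_N(x)} \leqslant C\abs{x}^{N+1}$ near $0$; undoing the translation yields $\abs{f(x) - P_N(x - x_0)} \leqslant C\abs{x-x_0}^{N+1}$ near $x_0$, as claimed. The whole argument is really bookkeeping with Taylor expansions; the only points requiring a little care are excluding $N = \pm\infty$ via smoothness, passing from the ``a.e.''\ bound in the definition to a genuine pointwise bound by continuity, and tracking the $N = +\infty$ case in the contradiction step --- which I regard as the fiddliest point rather than a genuine obstacle.
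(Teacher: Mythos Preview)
Your proof is correct and follows essentially the same approach as the paper's --- both arguments rest entirely on Taylor's theorem with remainder --- but you spell out in detail the steps the paper compresses into ``the claims about the derivatives follow directly'', in particular the contradiction argument showing $\partial^\alpha f(x_0)=0$ for $|\alpha|<N$ and the identification $N=m$ ensuring $P_N\not\equiv0$. Note that the bound in the lemma's statement should read $C\abs{x-x_0}^{N+1}$ rather than $C\abs{x}^{N+1}$ (a typo in the paper), which is exactly what you obtain after undoing the translation.
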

\begin{proof}
Since $f$ is smooth, it is bounded in a neighbourhood of $x_0$, and so $N \geqslant 0$. Now, by Taylor's theorem, for any $\widetilde N\in\mathbb Z_+$,
\[
f(x) = \sum_{\lvert \alpha \rvert \leqslant\widetilde N} \frac{\partial^\alpha f(x_0)}{\alpha!}(x-x_0)^\alpha + \sum_{\lvert\alpha\rvert =\widetilde N} h_\alpha(x)(x-x_0)^\alpha
\]
and $h_\alpha(x)\longrightarrow0$ as $x\longrightarrow x_0$. Since $f$ is smooth so are the $h_\alpha$. The mean value theorem implies that $\abs{h_\alpha(x)} \leqslant C\abs{x-x_0}$ in a neighbourhood of $x_0$.

The claims about the derivatives follows directly. If $N<\infty$ we can choose $\widetilde N\geqslant N$ and $P_N$ to be the sum of the terms of order $N$ in the first sum above.
\end{proof}

The following lemma lacks the factor $1/K^n$ from the hyperbolic metric. This will not be an issue since $K$ will be smooth, positive and bounded from below on compact sets in $U$, and so we can simply let the function $\varphi$ hold this factor.

\begin{lemma}\label{orig90DegCornerScatter}
Let $\Omega \subset \R^n$ be a non-empty open set and let $x_0 \in \partial\Omega$. Assume that there is an open cone $\mathcal C$ with vertex $x_0$ such that for some $h\in\mathbb R_+$ we have $\Omega \cap B(x_0,h) = \mathcal C \cap B(x_0,h)$.
Let $V = \chi_{\mathcal C} \varphi$ in $B(x_0,h)$, with $\varphi\in C^{\alpha}(\overline{B}(x_0,h))$ for some $\alpha\in\mathbb R_+$, and $\chi_{\mathcal C}$ the characteristic function of the cone $\mathcal C$.

Assume that $\rho_0 \in \mathbb{C}^n$ is such that $\rho_0\cdot\rho_0 = 0$, $\abs{\rho_0}=1$ and that the function $\exp(\rho_0\cdot (x-x_0))$ is integrable in $\mathcal C$. 
Let there be $C\in\mathbb R_+$, $p\in\left[1,\infty\right[$, and a sequence $s_1$, $s_2$, \dots of positive real numbers satisfying $s_j \longrightarrow +\infty$ as $j\longrightarrow\infty$, and functions $u_0^1,u_0^2,\ldots \in L^p(B(x_0,h))$ satisfying
\[
  u_0^j(x) = e^{\rho_j\cdot (x-x_0)}(1+\psi_j(x)),
\]
for every $j\in\mathbb Z_+$, where $\rho_j = s_j \rho_0$ and $\norm{\psi_j}_{L^p(B(x_0,h))} \leqslant C s_j^{-n/p-\delta}$ for some $\delta\in\mathbb R_+$.

Let $w$ be smooth in $B(x_0,h)$ and of order at least $N \in \mathbb Z_+\cup\left\{0\right\}$ at $x_0$. Then we may write
\[
  w(x) = P_N(x-x_0) + r_{N+1}(x),
\]
where $P_N$ either vanishes identically or is a homogeneous complex polynomial of degree $N$, and where the error term satisfies $\left|r_{N+1}(x)\right|\leqslant C\left|x-x_0\right|^{N+1}$ for $x$ near $x_0$.

As a consequence, if
\begin{equation} \label{orthoIntegral}
  \int\limits_{\Omega \cap B(x_0,h)}V\,u_0^j\,w\, \mathrm dx=o(s_j^{-N-n})
\end{equation}
as $j\longrightarrow\infty$, then we have
\[
  \varphi(x_0) \int_{(-x_0)+\mathcal C} e^{\rho_0\cdot x}\, P_N(x) \,\mathrm dx = 0,
\]
i.e. the Laplace transform of $P_N$ over $\mathcal C$ translated to the origin vanishes at $\rho_0$ if $\varphi(x_0)\neq0$.
\end{lemma}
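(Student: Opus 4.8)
The plan is to establish the decomposition of $w$ first, then substitute the CGO solutions into the orthogonality integral and track the asymptotic contributions of each piece as $s_j\to\infty$, isolating the dominant term which will be the Laplace transform of $P_N$.

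First I would obtain the polynomial decomposition $w(x) = P_N(x-x_0) + r_{N+1}(x)$. This is immediate from Lemma~\ref{orderSplitLemma}: since $w$ is smooth and of order at least $N$ at $x_0$, all derivatives $\partial^\alpha w(x_0)$ with $\abs\alpha < N$ vanish, and the lemma produces a homogeneous polynomial $P_N$ of degree $N$ (possibly zero) with $\abs{w(x) - P_N(x-x_0)} \leqslant C\abs{x-x_0}^{N+1}$ near $x_0$. Strictly speaking one applies the lemma with $\widetilde N \geqslant N$; if the true order exceeds $N$ then $P_N$ vanishes identically, which is one of the allowed alternatives.

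Next, in the integral \eqref{orthoIntegral} I would change variables $x \mapsto x + x_0$ so the cone is translated to the origin, and write $V\,u_0^j\,w = \varphi \cdot e^{\rho_j\cdot(x-x_0)} \cdot (1+\psi_j) \cdot \bigl(P_N(x-x_0) + r_{N+1}(x)\bigr)$ on $\mathcal C \cap B(x_0,h)$. Then I would perform the dilation $x - x_0 = y/s_j$, which converts $e^{\rho_j\cdot(x-x_0)} = e^{\rho_0\cdot y}$ (using $\rho_j = s_j\rho_0$), picks up a Jacobian factor $s_j^{-n}$, turns $P_N(x-x_0)$ into $s_j^{-N} P_N(y)$ by homogeneity, and rescales the ball $B(x_0,h)$ to the expanding ball $B(0, s_j h)$; the cone is scale-invariant. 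So the leading term is $s_j^{-N-n}\,\varphi(x_0)\int_{(-x_0)+\mathcal C} e^{\rho_0\cdot y}\,P_N(y)\,\mathrm dy$ up to errors. To control the errors I would split off: (i) the contribution of $r_{N+1}$, which after scaling carries an extra $s_j^{-1}$ and hence is $o(s_j^{-N-n})$ once integrability of $e^{\rho_0\cdot y}$ on $\mathcal C$ is used to dominate the tail; (ii) the contribution of $\psi_j$, estimated by Hölder with $\norm{\psi_j}_{L^p} \leqslant C s_j^{-n/p-\delta}$ against the $L^{p'}$-norm of $e^{\rho_j\cdot(x-x_0)}(P_N + r_{N+1})$ on the ball — after the same rescaling this norm contributes the matching $s_j^{-n/p'}$ together with $s_j^{-N}$, so the $\psi_j$-term is $O(s_j^{-N-n-\delta}) = o(s_j^{-N-n})$; and (iii) the replacement of $\varphi(x)$ by $\varphi(x_0)$, whose difference is $O(\abs{x-x_0}^\alpha)$ by Hölder continuity and hence contributes an extra $s_j^{-\alpha}$. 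Dividing the whole identity \eqref{orthoIntegral} by $s_j^{-N-n}$ and letting $j\to\infty$, every error tends to zero while the hypothesis forces the left side to vanish, leaving $\varphi(x_0)\int_{(-x_0)+\mathcal C} e^{\rho_0\cdot x}\,P_N(x)\,\mathrm dx = 0$.

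**The main obstacle** is the bookkeeping of the error terms over the \emph{expanding} domain $B(0, s_j h)$ after rescaling: one must ensure that extending the integration all the way to the full cone $(-x_0)+\mathcal C$ does not introduce uncontrolled mass at infinity, and this is exactly where the hypothesis that $e^{\rho_0\cdot x}$ is integrable on $\mathcal C$ is essential — it lets us bound $\int_{\mathcal C \setminus B(0, s_j h)} e^{\Re(\rho_0\cdot y)}\,\abs{P_N(y)}\,\mathrm dy \to 0$ and likewise dominate the polynomially-weighted tails in (i)–(iii). The Hölder step in (ii) also requires care that $p' < \infty$ so that $e^{\rho_j\cdot(\,\cdot\,-x_0)} \in L^{p'}(B(x_0,h))$ with the right power of $s_j$; since $p \in [1,\infty[$ we may have $p' = \infty$, in which case one instead bounds $e^{\Re(\rho_j\cdot(x-x_0))} \leqslant 1$ on the cone (as $\Re(\rho_0\cdot(x-x_0)) \leqslant 0$ there by the integrability hypothesis pushed to the relevant half-space) and the argument simplifies. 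Once these tail estimates are in place the rest is the routine rescaling computation sketched above.
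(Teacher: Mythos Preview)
Your proposal is correct and follows essentially the same approach as the paper: translate to the origin, decompose $w=P_N+r_{N+1}$ via Lemma~\ref{orderSplitLemma}, split the integrand into the pieces $\varphi(x_0)$ vs.\ $\varphi(x)-\varphi(x_0)$, $1$ vs.\ $\psi_j$, and $P_N$ vs.\ $r_{N+1}$, rescale $y=s_j(x-x_0)$, and estimate the error terms by exactly the powers $s_j^{-N-n-\alpha}$, $s_j^{-N-n-1}$, $s_j^{-N-n-\delta}$ you describe, with the tail over $\mathcal C\setminus B$ handled via the integrability of $e^{\rho_0\cdot x}$. One small slip in your side remark on the edge case $p'=\infty$: bounding $e^{\Re(\rho_j\cdot(x-x_0))}\leqslant 1$ is too crude, as it loses the needed factor $s_j^{-N}$; instead use that integrability on the cone forces $\Re(\rho_0\cdot(x-x_0))\leqslant -c\,\abs{x-x_0}$ there, so that $\sup_{\mathcal C}\,e^{-c\,s_j\abs{x-x_0}}\abs{x-x_0}^N=O(s_j^{-N})$, and the H\"older estimate goes through as in the finite-$p'$ case.
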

\begin{proof}
By translating all the sets and functions we may assume that $x_0 = 0$. This simplifies notation. Also, in the following, the constant factors $C$ in estimates are allowed to depend on $n$, $\Omega$, $h$, $\mathcal C$, $\varphi$, $\alpha$, $N$, $w$, $p$ and $\rho_0$, but not on $j$, the point being that in the end we will take $j\longrightarrow\infty$.

According to Lemma~\ref{orderSplitLemma} we get the splitting $w = P_N + r_{N+1}$, with $P_N$ of order $N$ and $\abs{r_{N+1}(x)} \leqslant C \abs{x}^{N+1}$ in a neighbourhood of $0$. 
We will let $B = B(0,\varepsilon) \subset B(0,h)$ be such a neighbourhood. We can assume that $h = \varepsilon$, because \eqref{orthoIntegral} decays sufficiently fast also when $h=\varepsilon$. This follows since $\exp(s_j\rho_0\cdot x)$ decays exponentially in $\Omega \cap B(x_0,h) \setminus B(x_0,\varepsilon)$ as $j\longrightarrow\infty$ by the assumption on $\rho_0$.

We will split the integral in \eqref{orthoIntegral} given in the statement by splitting each of the factors of the integrand into a main term and a ``higher order'' term. In particular,
\begin{align*}
w(x) &= P_N(x) + r_{N+1}(x), \\
u_0^j(x) &= e^{\rho_j\cdot x}(1 + \psi_j(x)), \\
V(x) &= \chi_{\mathcal C} \big(\varphi(0) + (\varphi(x)-\varphi(0)) \big),
\end{align*}
with the estimates
\begin{align*}
\abs{r_{N+1}(x)} &\leqslant C \abs{x}^{N+1},\\
\norm{\psi_j}_{L^p(B(0,h))} &\leqslant C s_j^{-n/p-\delta},\\
\abs{\varphi(x)-\varphi(0)} &\leqslant \norm{\varphi}_{C^\alpha(\overline{B}(0,h))} \abs{x}^\alpha,
\end{align*}
for $x\in B(0,h)$.

Now, writing $B = B(0,h)$ and letting $\psi=\psi_j$ and $\rho = s\rho_0$ with $s=s_j$,  $j \in \{1, 2, \ldots\}$, we have
\begin{align}
\label{monsterMRn}
&\int\limits_{\Omega \cap B} V(x)\, u_0^j(x)\, w(x)\, \mathrm dx \notag\\
&\qquad = \int\limits_{\mathcal C \cap B} \big(\varphi(0) + (\varphi(x)-\varphi(0))\big)\, e^{\rho\cdot x}\,(1+\psi(x)) \,(P_N(x) + r_{N+1}(x)) \,\mathrm dx \notag\\
&\qquad = \int\limits_{\mathcal C \cap B} e^{\rho\cdot x} \,\varphi(0)\, P_N(x) \,\mathrm dx + \int\limits_{\mathcal C \cap B} e^{\rho\cdot x}\, (\varphi(x)-\varphi(0))\, P_N(x)\, \mathrm dx \notag\\
&\qquad \quad + \int\limits_{\mathcal C \cap B} e^{\rho\cdot x}\,\varphi(x) \,r_{N+1}(x)\, \mathrm dx + \int\limits_{\mathcal C \cap B} e^{\rho\cdot x} \,\varphi(x) \,w(x) \,\psi(x)\, \mathrm dx \notag\\
&\qquad = \varphi(0) \int\limits_{\mathcal C} e^{\rho\cdot x}\,P_N(x)\, \mathrm dx - \varphi(0) \int\limits_{\mathcal C \setminus B} e^{\rho\cdot x}\, P_N(x)\, \mathrm dx \notag\\
&\qquad\quad + \int\limits_{\mathcal C \cap B} e^{\rho\cdot x} \,(\varphi(x)-\varphi(0))\, P_N(x) \,\mathrm dx + \int\limits_{\mathcal C \cap B} e^{\rho\cdot x}\,\varphi(x) \,r_{N+1}(x) \,\mathrm dx \notag\\
&\qquad\quad + \int\limits_{\mathcal C \cap B} e^{\rho\cdot x} \,\varphi(x)\, w(x) \,\psi(x)\, \mathrm dx.
\end{align}

Let us consider the individual integrals next. First,
\begin{equation} \label{intM1Rn}
\abs{ \int_{\mathcal C \setminus B} e^{\rho\cdot x}\, P_N(x) \,\mathrm dx } \leqslant \int_{\mathcal C \setminus B} e^{s\Re\rho_0\cdot x}\, \abs{P_N(x)} \,\mathrm dx \leqslant C e^{-Cs}
\end{equation}
because we assumed that $\exp(\rho_0\cdot x)$ is integrable in $\mathcal C$, so the integral decays exponentially in $\mathcal C \setminus B$ as $j\longrightarrow\infty$. The three integrals over $\mathcal C \cap B$ are dealt with H\"older's inequality. We start with
\begin{align} \label{intM2Rn}
&\abs{\int_{\mathcal C \cap B} e^{\rho\cdot x}\, (\varphi(x)-\varphi(0))\, P_N(x) \,\mathrm dx} \leqslant C \int_{\mathcal C} e^{s\Re\rho_0\cdot x}\, \abs{x}^{N+\alpha}\, \mathrm dx \notag\\
&\qquad = C\, s^{-N-n-\alpha} \int_{\mathcal C} e^{\Re\rho_0 \cdot y}\, \abs{y}^{N+\alpha}\, \mathrm dy,
\end{align}
and the last integral is finite. Next, 
\begin{align} \label{intM3Rn}
&\abs{\int_{\mathcal C \cap B} e^{\rho\cdot x}\,\varphi(x)\, r_{N+1}(x) \,\mathrm dx} \leqslant C \int_{\mathcal C} e^{s\Re\rho_0\cdot x}\, \abs{x}^{N+1}\, \mathrm dx \notag\\
&\qquad = C\, s^{-N-n-1} \int_{\mathcal C} e^{\Re\rho_0\cdot y}\, \abs{y}^{N+1} \,\mathrm dx,
\end{align}
and again the dependence on $s$ is made explicit, this time by taking the $L^\infty$-norm of $\varphi$. For the last integral note that $\abs{w(x)} \leqslant C\abs{x}^N$ since it is of order at least $N$. Let $p'$ be the dual exponent of $p$, so that $1=1/p+1/{p'}$. Then
\begin{align} \label{intM4Rn}
&\abs{\int_{\mathcal C \cap B} e^{\rho\cdot x}\, \varphi(x)\, w(x)\, \psi(x)\, \mathrm dx} \leqslant C \norm{ e^{s\rho_0\cdot x}\, w(x) }_{L^{p'}(\mathcal C \cap B)} \norm{\psi}_{L^p(B)} \notag\\
&\qquad \leqslant C\, s^{-N-n/p'}\, s^{-n/p-\delta} = C \,s^{-N-n-\delta}
\end{align}
since $\abs{\exp(s\rho_0\cdot x)w(x)} \leqslant C \exp(s\Re\rho_0\cdot x) \,\abs{x}^N$ and by the scaling properties of the $L^{p'}$-norm.

Before plugging estimates \eqref{intM1Rn}, \eqref{intM2Rn}, \eqref{intM3Rn} and \eqref{intM4Rn} into \eqref{monsterMRn} we will change variables $y = sx$ in the first term in the right-hand side of \eqref{monsterMRn}. The decay of the left-hand side and the estimates of the individual integrals above will leave us with
\[
s^{-N-n} \abs{\varphi(0) \int_{\mathcal C} e^{\rho_0\cdot y}\, P_N(y)\, \mathrm dy } =o(s^{-N-n}).
\]
The claim follows by letting $s=s_j\longrightarrow\infty$.
\end{proof}

\subsection{Finishing the proofs}

We will first show that if $w$ satisfies the free equation $(H_0 - \lambda)\,w = 0$ with $\lambda\in\mathbb R_+$ in the hyperbolic space $\mathbb H^n$, then the principal term of its Taylor expansion is harmonic in the chosen coordinates. This is true for any coordinates that transform $H_0$ into $H_K$. After that we can prove that certain types of corners always scatter in the hyperbolic space.

\begin{lemma}
\label{harmonicSplitM}
Let $U \subseteq \R^n$ be a non-empty open set and let $K\colon U\longrightarrow\R_+$ be smooth.
Let $w \in L^1_{\mathrm{loc}}(U)$ and assume that $w$ does not vanish almost everywhere and that $(H_K - \lambda)\, w = 0$ in $U$, where $H_K$ is given by \eqref{HKformula} on page \pageref{HKformula}, and $\lambda\in\mathbb R$. Then $w$ is smooth and of finite order $N \in \mathbb Z_+\cup\left\{0\right\}$ at any $x_0 \in U$. Moreover, if $P_N$ is the sum of the lowest order terms in the Taylor-expansion of $w$ at $x_0$, given by Lemma~\ref{orderSplitLemma}, then $P_N$ is a harmonic polynomial in $\mathbb R^n$, i.e.\ $\Delta P_N = 0$.
\end{lemma}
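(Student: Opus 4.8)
The plan is to settle smoothness and finiteness of the order first, and then to obtain harmonicity of $P_N$ by reading off the terms of lowest homogeneity in the equation $(H_K-\lambda)w=0$ near $x_0$.

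For smoothness I would divide by $-K^2$: since $K$ is smooth and locally bounded below by a positive constant, the equation becomes $\Delta w = \tfrac{n-2}{K}\,\nabla K\cdot\nabla w - \tfrac{(n-1)^2/4+\lambda}{K^2}\,w$, a second-order elliptic equation with smooth coefficients, so interior elliptic regularity and bootstrapping give $w\in C^\infty(U)$. Equivalently one may conjugate by $K^{(n-2)/2}$ via Lemma~\ref{conjugation-lemma} to reach $(-\Delta+Q)f=0$ with $Q$ smooth and $w=K^{(n-2)/2}f$; since $K^{(n-2)/2}$ is smooth and nowhere zero, $w$ and $f$ have the same order at $x_0$ and leading polynomials differing by the nonzero constant $K(x_0)^{(n-2)/2}$, so the two viewpoints are interchangeable. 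For the order, a solution of such an equation cannot vanish to infinite order at an interior point without vanishing identically nearby, by the strong unique continuation property; and in the applications $H_K$ comes from a real-analytic metric, so $w$ is in fact real-analytic and this is immediate. As $w\not\equiv0$, its order $N$ at $x_0$ is therefore finite, and $N\geqslant0$ since $w$ is smooth, by Lemma~\ref{orderSplitLemma}.

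For the harmonicity I would translate $x_0$ to the origin and write $w=P_N+r_{N+1}$ with $P_N$ homogeneous of degree $N$ and $r_{N+1}$ smooth vanishing to order at least $N+1$, as in Lemma~\ref{orderSplitLemma}; if $N\leqslant1$ then $\Delta P_N=0$ trivially, so I may assume $N\geqslant2$. Now comes the degree count. Write $K^2=K(0)^2+\kappa$ with $\kappa$ smooth and $\kappa(0)=0$, so $\abs{\kappa(x)}\leqslant C\abs{x}$ near $0$; the second derivatives of $r_{N+1}$ vanish to order at least $N-1$, hence $\Delta r_{N+1}(x)=O(\abs{x}^{N-1})$, while $\Delta P_N$ is homogeneous of degree $N-2$, so $K^2\Delta w=K(0)^2\Delta P_N+O(\abs{x}^{N-1})$ (the cross terms $\kappa\,\Delta P_N$ and $\kappa\,\Delta r_{N+1}$ being $O(\abs{x}^{N-1})$ and $O(\abs{x}^{N})$ respectively). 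Likewise $\nabla w=O(\abs{x}^{N-1})$, so $(n-2)\,K\,\nabla K\cdot\nabla w=O(\abs{x}^{N-1})$, and the zeroth-order term $\bigl((n-1)^2/4+\lambda\bigr)w$ is $O(\abs{x}^{N})$. Substituting into $(H_K-\lambda)w=0$ leaves $K(0)^2\,\Delta P_N(x)=O(\abs{x}^{N-1})$. Since $\Delta P_N$ is homogeneous of degree $N-2$, evaluating along a ray $x=tv$ with $\abs{v}=1$ gives $\abs{\Delta P_N(v)}=t^{-(N-2)}\,\abs{\Delta P_N(tv)}\leqslant Ct\longrightarrow0$ as $t\to0$, so $\Delta P_N$ vanishes on the unit sphere, hence identically.

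I do not expect a real obstacle; the point needing care is the degree bookkeeping of the third step. What makes it work is that every term of the equation other than $K(0)^2\Delta P_N$ has homogeneity at least $N-1$, strictly above the homogeneity $N-2$ of $\Delta P_N$, and this hinges on the top-order coefficient $K^2$ being, to leading order, the constant $K(x_0)^2$, and on the first-order term already carrying a gradient of $w$. Conjugating that first-order term away via Lemma~\ref{conjugation-lemma} at the outset would remove one term from the count. The only ingredient imported from outside is the strong unique continuation property, used solely for finiteness of the order.
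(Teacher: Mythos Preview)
Your proposal is correct and follows essentially the same approach as the paper's own proof: smoothness from elliptic regularity, finiteness of the order from strong unique continuation (the paper cites Cordes via Baouendi--Zachmanoglou here), and the harmonicity of $P_N$ from the same degree count, isolating $K(x_0)^2\,\Delta P_N$ and checking that every remaining term is $O(\lvert x-x_0\rvert^{N-1})$. The only cosmetic differences are that the paper expands $K=K(x_0)+k_1$ rather than $K^2=K(0)^2+\kappa$, and concludes directly from homogeneity rather than via your ray argument; neither affects the substance.
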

\begin{proof}
The function $w$ is smooth because the coefficients of $H_K$ are smooth. Also, $w$ has order $N \in \mathbb Z_+ \cup \{0,+\infty\}$ at $x_0$ by Lemma~\ref{orderSplitLemma}. If $N = \infty$ then $w$ vanishes to infinite order at $x_0$, and hence would vanish identically in $U$. This follows from results in \cite{Cordes} according to \cite{Baouendi--Zachmanoglou}. Hence $0 \leqslant N < \infty$, and we may assume that $N \geqslant 2$.

We will emphasize the order of a function by writing it as a subscript. Then, by Lemma~\ref{orderSplitLemma} and the differentiability of $K$ at $x_0$, we have
\begin{align*}
w(x) &= P_N(x-x_0) + r_{N+1}(x), &\abs{r_{N+1}(x)} &\leqslant C \abs{x-x_0}^{N+1},\\
K(x) &= K(x_0) + k_1(x), &\abs{k_1(x)} &\leqslant C\abs{x-x_0},
\end{align*}
in a neighbourhood of $x_0$.

According to \eqref{HKformula},
\[
  0 = (H_K - \lambda) w = \big( -K^2 \,\Delta + \left(n-2\right) K \,\nabla K \cdot \nabla - \lambda - (n-1)^2/4 \big) w.
\]
Now
\begin{multline*}
K(x_0)^2\, \Delta P_N(x-x_0) 
= \left(n-2\right) K\, \nabla K \cdot \nabla w - \left(\lambda_0+\left(n-1\right)^2/4\right) w 
\\ - K^2\, \Delta r_{N+1} - 2\,K(x_0) \,k_1\, \Delta P_N(x-x_0) - k_1^2 \,\Delta P_N(x-x_0).
\end{multline*}
Next we use the boundedness of $K$ and $\nabla K$ near $x_0$ to get
\begin{align*}
&\abs{ K(x_0)^2\, \Delta P_N(x-x_0) }
	\leqslant C \,\big( \abs{\nabla w(x)} + \abs{w(x)} \\
&\qquad\qquad		+ \abs{\Delta r_{N+1}(x)} 
		+ \abs{k_1(x)} \cdot\abs{ \Delta P_N(x-x_0)} 
		+ \abs{k_1(x)}^2 \abs{\Delta P_N(x-x_0) } \big).
\end{align*}

Both $w$ and $r_{N+1}$ are smooth and of orders $N$ and at least $N+1$, respectively. By looking at how many derivatives vanish at $x_0$, Lemma~\ref{orderSplitLemma} shows that $\nabla w$ and $\Delta r_{N+1}$ are of order at least $N-1$, and so $\abs{\nabla w(x)}+\abs{\Delta r_{N+1}(x)} \leqslant C \abs{x-x_0}^{N-1}$. Also, $\Delta P_N$ is of degree at most $N-2$, and so using all of the previous estimates we see that
\[
\abs{ K(x_0)^2\, \Delta P_N(x-x_0) } \leqslant C \abs{x-x_0}^{N-1}
\]
in a neighbourhood of $x_0$. Since $\Delta P_N$ either vanishes identically or is a homogeneous polynomial of degree $N-2$, and since $K(x_0) \neq 0$, we have $\Delta P_N = 0$.
\end{proof}

\bigskip
We can now prove a proposition from which the main theorems on non-scattering energies follow easily. The proposition deals with corner scattering in $\R^n$ for the partial differential operator $H_K$ given by \eqref{HKformula} on page \pageref{HKformula}. After this, the strategy for proving the main theorem is to choose a coordinate patch $(U,x)$ for $\mathbb H^n$, and this will fix the function $K$ appearing in the coefficients of $H_K$. The proposition gives conditions, in terms of the coordinate patch, under which the potential will always scatter. These will then have to be translated back into the language of the hyperbolic space.

\begin{proposition}\label{finalProp}
Let $n \geqslant 2$ be an integer and let $U \subseteq \R^n$ be a non-empty open set with a smooth function $K\colon U\longrightarrow\R_+$. Let $\Omega \subset U$ be a bounded non-empty open set such that $\overline{\Omega} \subset U$ and $U \setminus \overline{\Omega}$ has connected interior. Also, let $\mathcal C \subset \R^n$ be an open cone with vertex $x_0 \in U$ and assume that there is $h\in\mathbb R_+$ such that $B(x_0,h) \cap \Omega = B(x_0,h) \cap \mathcal C$ and $h < d(x_0, \partial U)$.

Let $V \in L^\infty(U)$ vanish outside of $\Omega$ and assume that there is a compactly supported function $\varphi\colon\R^n\longrightarrow\C$ in a function space $X$, defined below, such that $V = \chi_{\mathcal C}\, \varphi$ in $B(x_0,h)$. Let $\lambda\in\R_+$. If
\begin{equation}\label{requiredIntByParts}
  \int\limits_{\Omega \cap B(x_0,h)} V\, u_0\, w\, \frac{\mathrm dx}{K^n} = \int\limits_{\partial B(x_0,h)} (v_s\, \partial_N u_0 - u_0\, \partial_N v_s)\, \frac{\mathrm dS}{K^{n-2}}
\end{equation}
for any $u_0 \in H^2(B(x_0,h))$ satisfying $(H_K + V - \lambda)u_0=0$ with $H_K$ given in \eqref{HKformula} and some functions
\begin{itemize}
\item $w \in L^1_{\mathrm{loc}}(U)$ satisfying $(H_K - \lambda)\,w = 0$ and not vanishing almost everywhere, and
\item $v_s \in H^2(B(x_0,h))$ vanishing outside of $\Omega$,
\end{itemize}
then, in
\begin{enumerate}[{case} i)]
\item $n=2$, $X = C^\alpha$ for some $\alpha\in\mathbb R_+$, and $\mathcal C$ has opening angle in ${]{0,\pi}[}$, or\label{finalPropC1}
\item $n=3$, $X = C^\alpha$ for some $\alpha\in\left]1/4,\infty\right[$, and $\mathcal C = {]{0,\infty}[}^3 + x_0$, or\label{finalPropC2}
\item $n\in\left\{2,3,\ldots\right\}$, $X = H^{s,r}$ for some $r \in\left[ 1,\infty\right[$, $s \in\left] n/r,\infty\right[$, and $\mathcal C = {]{0,\infty}[}^n + x_0$,\label{finalPropC3}
\end{enumerate}
we have $\varphi(x_0)=0$.
\end{proposition}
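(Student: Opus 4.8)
The plan is to argue by contraposition: assuming $\varphi(x_0)\neq0$, I will feed complex geometrical optics solutions into the identity \eqref{requiredIntByParts} and extract a Laplace transform identity that is known to fail for admissible cones. First, since $w\in L^1_{\mathrm{loc}}(U)$ solves $(H_K-\lambda)\,w=0$ and does not vanish almost everywhere, Lemma~\ref{harmonicSplitM} shows that $w$ is smooth near $x_0$, has finite order $N\in\mathbb Z_+\cup\{0\}$ there, and that its leading Taylor polynomial $P_N$ (in the sense of Lemma~\ref{orderSplitLemma}) is a nonzero homogeneous harmonic polynomial of degree $N$.

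Next I would fix the complex direction. By the non-vanishing theorems for Laplace transforms of harmonic polynomials over admissible cones --- \cite{Paivarinta--Salo--Vesalainen} for a planar sector of opening less than $\pi$ in case~\ref{finalPropC1}), and \cite{Blasten--Paivarinta--Sylvester} for the orthant $\left]0,\infty\right[^n$ in cases~\ref{finalPropC2}) and~\ref{finalPropC3}) --- there is $\rho_0\in\mathbb C^n$ with $\rho_0\cdot\rho_0=0$, $\abs{\rho_0}=1$, such that $y\mapsto e^{\rho_0\cdot y}$ is integrable over $(-x_0)+\mathcal C$ and $\int_{(-x_0)+\mathcal C}e^{\rho_0\cdot y}\,P_N(y)\,\mathrm dy\neq0$; the normalization $\abs{\rho_0}=1$ is harmless by homogeneity of the cone and of $P_N$. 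Since the admissible cones are pointed, integrability forces $\Re\rho_0\cdot(x-x_0)\leqslant-c\,\abs{x-x_0}$ for $x\in\overline{\mathcal C}$ and some $c\in\mathbb R_+$. Applying Proposition~\ref{CGOinM} with $\nu=0$ on $B=B(x_0,h)$ (legitimate since $\overline B\subset U$) to the vectors $\rho_j=s_j\rho_0$ for a sequence $s_j\longrightarrow+\infty$ (with $s_j>s_0$) yields solutions $u_0^j(x)=K(x)^{(n-2)/2}e^{\rho_j\cdot x}(1+\psi_j(x))$ of $(H_K+V-\lambda)\,u_0^j=0$ with $\norm{\psi_j}_{H^2(B)}\leqslant Cs_j^2$ and $\norm{\psi_j}_{L^p(B)}\leqslant Cs_j^{-n/p-\delta}$ for some $p\in\left[1,\infty\right[$ and $\delta\in\mathbb R_+$; in case~\ref{finalPropC3}) one takes $p>n$, so the bound $Cs_j^{-1}$ there acquires this form with $\delta=1-n/p$.

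Then I would insert $u_0^j$ into \eqref{requiredIntByParts}. Put $\widetilde\varphi=\varphi\,K^{-(n+2)/2}$ on $B$: this lies in $C^\alpha(\overline B)$ --- directly in cases~\ref{finalPropC1}) and~\ref{finalPropC2}), and via the Morrey embedding $H^{s,r}\hookrightarrow C^{0,\gamma}$ in case~\ref{finalPropC3}) --- and $\widetilde\varphi(x_0)\neq0$. Since $V=\chi_{\mathcal C}\varphi$ in $B$, after dividing by the nonzero constant $e^{\rho_j\cdot x_0}$ the left side of \eqref{requiredIntByParts} becomes $\int_{\Omega\cap B}\widetilde\varphi(x)\,e^{\rho_j\cdot(x-x_0)}(1+\psi_j(x))\,w(x)\,\mathrm dx$, precisely the integral in \eqref{orthoIntegral} for the potential $\chi_{\mathcal C}\widetilde\varphi$ and the CGO family $e^{\rho_j\cdot(x-x_0)}(1+\psi_j)$. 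The right side becomes a boundary integral over $\partial B$ of $v_s$ and $\partial_N v_s$ paired against $K^{(n-2)/2}e^{\rho_j\cdot(x-x_0)}(1+\psi_j)$ and its normal derivative. The key point is that $v_s\in H^2(B)$ vanishes on the open set $B\setminus\overline{\mathcal C}\subseteq B\setminus\Omega$, hence so do the traces of $v_s$ and $\partial_N v_s$ on $\partial B\setminus\overline{\mathcal C}$, so the boundary integrand is supported where $x\in\overline{\mathcal C}$ and $\abs{x-x_0}=h$ --- there $\Re\rho_0\cdot(x-x_0)\leqslant-ch$, so $\lvert e^{\rho_j\cdot(x-x_0)}\rvert\leqslant e^{-chs_j}$. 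The trace theorem on $B$ with $\norm{\psi_j}_{H^2(B)}\leqslant Cs_j^2$ and the boundedness of $\norm{v_s}_{H^2(B)}$ then bound the boundary integral by $Cs_j^3e^{-chs_j}=o(s_j^{-N-n})$. Thus \eqref{orthoIntegral} holds, and Lemma~\ref{orig90DegCornerScatter} gives $\widetilde\varphi(x_0)\int_{(-x_0)+\mathcal C}e^{\rho_0\cdot y}P_N(y)\,\mathrm dy=0$; as the integral is nonzero we get $\widetilde\varphi(x_0)=0$, whence $\varphi(x_0)=0$, a contradiction.

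The only genuinely hard ingredient is the choice of $\rho_0$ in the second step: that a nonzero harmonic polynomial cannot have Laplace transform vanishing at every admissible $\rho_0$ over an admissible cone. This is false for general cones, it is the analytic core of Euclidean corner scattering, and it is quoted from \cite{Blasten--Paivarinta--Sylvester, Paivarinta--Salo--Vesalainen}. Everything else is bookkeeping: checking that the conformal weight $K^{-(n+2)/2}$ absorbs harmlessly into the potential (preserving Hölder continuity and non-vanishing at $x_0$), that the CGO error bound from Proposition~\ref{CGOinM} has the exact shape demanded by Lemma~\ref{orig90DegCornerScatter}, and that the exponentially small boundary contribution beats the polynomial threshold $s_j^{-N-n}$ set by the order of $w$.
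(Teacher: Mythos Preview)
Your proposal is correct and follows essentially the same route as the paper's proof: invoke Lemma~\ref{harmonicSplitM} for $w$, construct CGO solutions via Proposition~\ref{CGOinM}, show the boundary term in \eqref{requiredIntByParts} is exponentially small because $v_s$ vanishes off $\overline{\mathcal C}$ while $e^{\rho_j\cdot(x-x_0)}$ decays on $\partial B\cap\overline{\mathcal C}$, and then appeal to Lemma~\ref{orig90DegCornerScatter} together with the Laplace-transform non-vanishing results of \cite{Blasten--Paivarinta--Sylvester, Paivarinta--Salo--Vesalainen}. Your bookkeeping with the conformal weight is in fact slightly sharper than the paper's --- you correctly absorb both the $K^{-n}$ from the measure and the $K^{(n-2)/2}$ from the CGO ansatz into $\widetilde\varphi=\varphi\,K^{-(n+2)/2}$, whereas the paper only mentions $V'=VK^{-n}$; and you fix the good direction $\rho_0$ up front rather than concluding ``for all admissible $\rho_0$'' at the end, which is logically equivalent.
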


\begin{proof}
We will use Lemma~\ref{orig90DegCornerScatter} with the potential $V'=V \,K^{-n} = \chi_{\mathcal C}\, \varphi\, K^{-n}$. There is no smoothness issue since $K^{-n} \,\varphi \in C^\alpha$ for some $\alpha > 0$ in all cases. This follows from the smoothness and lower bound of $K$ in the compact set $\overline{\Omega}$, and Sobolev embedding when $\varphi \in H^{s,r}$ with $s > n/r$. Let us show that we have functions $u_0$ and $w$ satisfying the lemma's assumptions, and that $\int V \,K^{-n}\, u_0\, w\, \mathrm dx$ decays exponentially when the the complex geometric optics solution parameter tends to infinity.

In all three cases we see that $\mathcal C$ is an open polyhedral cone. Hence Proposition~\ref{CGOinM} implies the existence of a set of complex geometrical optics solutions $u_0^j \in H^2(B(x_0,h))$, where $j$ ranges over $\mathbb Z_+$, and $p \in\left] 1,\infty\right[$ such that
\[
  u_0^j(x) = K(x)^{(n-2)/2}\, e^{\rho_j\cdot(x-x_0)}\,(1+\psi_j(x)), \qquad \norm{\psi_j}_{L^p(B(x_0,h))} \leqslant C\, s_j^{-n/p-\delta}
\]
for every $j\in\mathbb Z_+$, for some $\delta \in\mathbb R_+$, and where $\rho_j = s_j \rho_0 \in\C^n$ has absolute value $s_j$ which tends to infinity as $j \longrightarrow \infty$ and $\rho_0\cdot\rho_0=0$. Note that this form of the bound for $\psi_j$ holds also in case~\ref{finalPropC3} because $p$ can be large, for example $p>n$. We may choose $\rho_0\in\C^n$ such that $\exp(\rho_0\cdot(x-x_0))$ is integrable in a neighbourhood of $\mathcal C$, i.e.\ that $\Re \rho_0 \cdot (x-x_0) \leqslant -\gamma \abs{x-x_0}$ for some $\gamma\in\mathbb R_+$ when $x\in \mathcal C$.

Consider the function $w$ now. By Lemma~\ref{harmonicSplitM} it is smooth, has a finite order $N \in \mathbb Z_+\cup\left\{0\right\}$ at $x_0$, and its lowest order approximation $P_N$ is a harmonic homogeneous polynomial.

Now the only thing left to show for Lemma~\ref{orig90DegCornerScatter} is the exponential decay of $\int V\, K^{-n}\, u_0^j\, w\, \mathrm dx$ as $s_j \longrightarrow \infty$. Recall that $v_s$ vanishes outside $\mathcal C$ in $B(x_0,h)$, which implies the same for its trace and normal derivative on $\partial B(x_0,h)$. On the other hand, by the choice of $\rho_0$, we have $\abs{\exp(\rho_j\cdot(x-x_0))} \leqslant \exp(-\gamma s_j h)$ when $x \in \partial B(x_0,h) \cap \mathcal C$. These imply the decay of the boundary term in the proposition statement, and hence also of the volume integral. Lemma~\ref{orig90DegCornerScatter} implies then that
\[
  \frac{\varphi(x_0)}{K^n(x_0)} \int\limits_{\mathcal C - x_0} e^{\rho_0\cdot x}\, P_N(x) \,\mathrm dx = 0.
\]

We assumed that $P_N$ is a nontrivial polynomial. Then standard arguments in corner scattering imply that $\varphi(x_0) = 0$. In more detail, the arguments of Section 5 in \cite{Paivarinta--Salo--Vesalainen} apply verbatim in the two dimensional case, and show that the Laplace transform cannot vanish for all admissible $\rho_0$. Similarly, in the three and higher dimensional cases where $\mathcal C$ is a right-angled corner, Theorem 2.5 of \cite{Blasten--Paivarinta--Sylvester} imply the same statement. Hence $\varphi(x_0) = 0$.
\end{proof}

\begin{proof}[Proof of Theorem~\ref{2Dthm}]
Use the Poincar\'e disc coordinates for $\mathbb H^2$. Choose these coordinates such that $\mathbb H^2$ is pushforwarded to $U=B(0,1)\subset\R^2$ and $x_0$ has coordinates $0$. Then $\mathcal C$ becomes an Euclidean cone restricted to the disc. By restricting to a smaller neighbourhood we may assume that the pushforward of $B$ is a Euclidean ball $B(0,h)$, $0<h<1$.

Assume that there would be a non-scattering incident wave $w$ of energy $\lambda\in\mathbb R_+$.  Then Lemma~\ref{orthogonalityRelationLemma} implies the following: for any $u_0 \in H^2(B)$ solving $(H_0 + V -\lambda)\,u_0 = 0$ we have
\[
  \int\limits_{B\cap\Omega} V\, u_0\, w\, \mathrm d\mu = \int\limits_{\partial B} ( v_s\, \partial_\nu u_0 - u_0 \,\partial_\nu v_s)\, \mathrm d\sigma
\]
where $v_s \in \mathring B^\ast(\mathbb H^2)$ is the corresponding scattered wave with vanishing far-field pattern. It also vanishes outside $\Omega$. Using the coordinates of the Poincar\'e disc, Corollary~\ref{orthoInCoordsCorol} implies that \eqref{requiredIntByParts} holds with $K(x) = 2/(1-\abs{x}^2)$. Case~\ref{finalPropC1}) holds, so Proposition~\ref{finalProp} implies that $\varphi(x_0)=0$. The contradiction implies that there is no non-scattering incident wave.
\end{proof}

\begin{proof}[Proof of Theorem~\ref{3Dthm}]
Model $\mathbb H^n$ by the $n$-dimensional Poincar\'e ball such that $\mathbb H^n$ is pushed forward to $U=B(0,1)\subset\R^n$ and $x_0$ to the origin $0\in U$. After rotation $\mathcal C$ becomes ${]{0,\infty}[}^n \cap U$. The rest of the proof is verbatim to the proof of Theorem~\ref{2Dthm} except that in addition to case~\ref{finalPropC1}) we also use case~\ref{finalPropC2}) and case~\ref{finalPropC3}) in Proposition~\ref{finalProp}.
\end{proof}

\begin{proof}[Proof of Theorem~\ref{confThm}]
Let $g$ denote the hyperbolic metric in $B\subset\mathbb H^n$ and $G$ the Euclidean one in $\Phi(B)\subset\R^n$, respectively. Let $g' = \Phi_\ast g$ be the pushforward of $g$ to $\Phi(B)$. We may assume that $B$ is small enough guaranteeing that $\Phi$ is a diffeomorphism and $\Phi\circ\Phi^{-1}$ is conformal in $\Phi(B)$. This implies that there is a positive function $K\colon\Phi(B)\longrightarrow\R_+$ such that
\[
  g'_x(a,b) = \frac{1}{K^2(x)}\, G_x(a,b)
\]
for vectors $a, b \in \R^n$ at $x \in \Phi(B)$. We shall pass to an even smaller, smooth neighbourhood $B$ of $x_0$ which maps to a Euclidean ball $B(0,h)$ with $h\in\mathbb R_+$. This allows us to assume that $\inf_{\Phi(B)} K > 0$, $B$ is smooth and that $\Phi(B) = B(0,h)$.

Assume that $H_0 + V$ would have a non-scattering energy $\lambda$ with corresponding incident wave $w$. By Lemma~\ref{orthogonalityRelationLemma} we see that for any $u_0 \in H^2(B)$ solving $(H_0 + V -\lambda)\,u_0=0$ we get
\[
  \int\limits_{B\cap\mathcal C} V\, u_0\, w\, \mathrm d\mu = \int\limits_{\partial B} (v_s \,\partial_\nu u_0 - u_0\, \partial_\nu v_s) \,\mathrm d\sigma
\]
where $v_s$ is the scattered wave corresponding to $w$, and it vanishes outside $\Omega$.

Use Corollary~\ref{orthoInCoordsCorol} for $B$. If we write $V' = V \circ \Phi^{-1}$, $u_0' = u_0 \circ \Phi^{-1}$, $w' = w \circ \Phi^{-1}$ and $v_s' = v_s \circ \Phi^{-1}$, then the Corollary implies
\[
  \int\limits_{\Phi(B)\cap \Phi(\mathcal C)} V'\, u_0'\, w'\, \frac{\mathrm dy}{K^n} = \int\limits_{\partial \Phi(B)} \big(v_s' \,\partial_N u_0' - u_0'\, \partial_N v_s')\big) \,\frac{\mathrm dS}{K^{n-2}}
\]
and $K$ is smooth and bounded below by a positive constant in $\Phi(B)$.

Let $\Delta_g$ denote the Laplace--Beltrami operator in $B$, and so $H_0 = -\Delta_g - \left(n-1\right)^2/4$ by \eqref{H0def} on page \pageref{H0def} where the upper half-space coordinates are used, or by the discussion at the beginning of Section~\ref{CGO}. The pushforward of the Laplacian is
\[\Phi_\ast(\Delta_g \,f) = \Delta_{g'} \,\Phi_\ast f= K^2\, \Delta_G \,\Phi_\ast f- \left(n-2\right) K\, \nabla_G \,K \cdot \nabla_G \,\Phi_\ast f,\]
 where $\Delta_G$ and $\nabla_G$ are the Euclidean Laplacian and gradient in $\Phi(B)$. In other words
\[
  H_0\, f(y) = -K^2\, \Delta_G\, \tilde f(x) + \left(n-2\right)\, K\, \nabla_G\, K \cdot \nabla_G\, \tilde f(x) - \frac{(n-1)^2}{4}\, \tilde f(x)
\]
where $\tilde f(x) := f(\Phi^{-1}(x)) = f(y)$. This is equal to $H_K\, \tilde f(x)$ where $H_K$ is given by \eqref{HKformula} on page \pageref{HKformula}. Moreover the pushforward of the potential $V$ restricted to $B(0,h)$ vanishes outside of a convex cone in two dimensions or ${]{0,\infty}[}^n$ in higher dimensions. The rest of the proof is as in the proofs of Theorems \ref{2Dthm} and \ref{3Dthm}.
\end{proof}

\section{Transmission eigenvalues via quadratic forms} \label{ITE}


Finally, let us focus on transmission eigenvalues and prove Theorems \ref{schrodinger-transmission-eigenvalues} and \ref{helmholtz-transmission-eigenvalues}. We note that the behaviour of transmission eigenvalues for the Schr\"odinger operator $H_0 + V - \lambda$ is different than for the Helmholtz operator $H_0 + \lambda V - \lambda$. The number $\nu\in\left\{0,1\right\}$ shall denote our choice of operator, and let us assume that $\Omega\subseteq\mathbb H^n$ is a bounded nonempty open set, and that $V\in L^\infty(\Omega)$ satisfies the conditions of one of Theorems \ref{schrodinger-transmission-eigenvalues} and \ref{helmholtz-transmission-eigenvalues}. The first step in the proofs is to move from the interior transmission problem to a single fourth-order equation.
\begin{proposition}
Let $\nu\in\left\{0,1\right\}$, and let $\lambda\in\mathbb R$ or $\lambda\in\mathbb R\setminus\{0\}$ depending on whether $\nu=0$ or $\nu=1$, let $\Omega$ be a bounded nonempty open set in $\mathbb H^n$, and let $V\in L^\infty(\Omega)$ only take positive real values, or only negative real values, and be bounded away from zero. Then the system
\[\begin{cases}
(H_0+\lambda^\nu V-\lambda)v=0&\text{in $\Omega$,}\\
(H_0-\lambda)w=0&\text{in $\Omega$,}\\
v-w\in H_0^2(\Omega)
\end{cases}\]
has a solution $v,w\in L^2(\Omega)$ with $v\not\equiv0$ and $w\not\equiv0$ \textbf{if and only if} there exists a function $u\in H_0^2(\Omega)$ with $u\not\equiv0$ solving (in the sense of distributions) the fourth-order equation
\[\left(H_0+\lambda^\nu V-\lambda\right)\frac1V\left(H_0-\lambda\right)u=0\]
in $\Omega$. Furthermore, this transition retains multiplicities in the sense that the space of pairs of solutions $\left\langle v,w\right\rangle$ to the interior transmission problem has the same dimension as the space of solutions $u$ to the fourth-order equation.
\end{proposition}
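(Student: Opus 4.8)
The plan is to set up an explicit linear isomorphism between the solution space of the interior transmission problem and that of the fourth-order equation, from which both the ``if and only if'' and the multiplicity claim follow at once. Write $\mathcal S_1$ for the space of pairs $(v,w)\in L^2(\Omega)^2$ satisfying the three transmission relations and $\mathcal S_2$ for the space of $u\in H^2_0(\Omega)$ solving $(H_0+\lambda^\nu V-\lambda)\tfrac1V(H_0-\lambda)u=0$ distributionally; both are vector spaces. Two facts are used throughout: first, the hypotheses on $V$ give $1/V\in L^\infty(\Omega)$, and the hypothesis on $\lambda$ gives $\lambda^\nu\neq0$ (automatic when $\nu=0$); second, since $\overline\Omega$ is a compact subset of $\mathbb H^n$ the coefficients of $H_0$ are smooth and bounded on $\overline\Omega$, so $H_0$ maps $H^2_0(\Omega)$ continuously into $L^2(\Omega)$, and consequently $\tfrac1V(H_0-\lambda)u\in L^2(\Omega)$ for every $u\in H^2_0(\Omega)$.

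First I would check that $(v,w)\mapsto u:=v-w$ carries $\mathcal S_1$ into $\mathcal S_2$: one has $u\in H^2_0(\Omega)$ by assumption, and $(H_0-\lambda)u=(H_0-\lambda)v=-\lambda^\nu V v$ using $(H_0-\lambda)w=0$, so $\tfrac1V(H_0-\lambda)u=-\lambda^\nu v$ and hence $(H_0+\lambda^\nu V-\lambda)\tfrac1V(H_0-\lambda)u=-\lambda^\nu(H_0+\lambda^\nu V-\lambda)v=0$. Conversely, given $u\in\mathcal S_2$ I would put $v:=-\lambda^{-\nu}\tfrac1V(H_0-\lambda)u$ and $w:=v-u$; both are in $L^2(\Omega)$ by the regularity remark above, and one verifies directly that $(H_0+\lambda^\nu V-\lambda)v=-\lambda^{-\nu}\cdot0=0$ from the fourth-order equation, that $(H_0-\lambda)w=(H_0-\lambda)v-(H_0-\lambda)u=-\lambda^\nu V v-(-\lambda^\nu V v)=0$, and that $v-w=u\in H^2_0(\Omega)$, so $(v,w)\in\mathcal S_1$. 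A short computation shows these two maps are mutually inverse: for $(v,w)\in\mathcal S_1$ the reconstructed first component is $-\lambda^{-\nu}\tfrac1V(H_0-\lambda)(v-w)=-\lambda^{-\nu}(-\lambda^\nu v)=v$, and then $w$ is recovered as $v-u$, while the other composition is immediate. Thus $(v,w)\mapsto v-w$ is a linear isomorphism $\mathcal S_1\to\mathcal S_2$, which already yields the statement on multiplicities.

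Finally I would match up nontriviality. If $(v,w)\in\mathcal S_1$ with $v\not\equiv0$ and $w\not\equiv0$, then $u=v-w$ cannot vanish: $u\equiv0$ would force $v=w$, hence $\lambda^\nu V v\equiv0$, hence $v\equiv0$ since $\lambda^\nu\neq0$ and $V$ is bounded away from zero. Conversely, if $u\in\mathcal S_2$ with $u\not\equiv0$, then the reconstructed $v$ and $w$ are both nontrivial: $v\equiv0$ would give $(H_0-\lambda)u=0$ in $\Omega$ with $u\in H^2_0(\Omega)$, while $w\equiv0$ would give $v=u\in H^2_0(\Omega)$ with $(H_0+\lambda^\nu V-\lambda)u=0$; in either case, extending $u$ by zero across $\partial\Omega$ (which introduces no jump terms, since $u$ and its first derivatives have vanishing traces) produces an $H^2$ solution on $\mathbb H^n$ of a second-order elliptic equation with bounded coefficients that vanishes on the nonempty open set $\mathbb H^n\setminus\overline\Omega$, so unique continuation forces $u\equiv0$. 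This gives the ``if and only if''. The only steps needing a little care are the $L^2$-regularity of the reconstructed $v$ --- exactly where the boundedness of $1/V$ and of the coefficients of $H_0$ on $\overline\Omega$ enter --- and the unique continuation input, and I expect neither to be a genuine obstacle.
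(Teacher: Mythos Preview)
Your argument is correct and follows essentially the same route as the paper's proof: the map $(v,w)\mapsto u=v-w$ and its inverse $u\mapsto\bigl(-\lambda^{-\nu}\tfrac1V(H_0-\lambda)u,\;-\lambda^{-\nu}\tfrac1V(H_0+\lambda^\nu V-\lambda)u\bigr)$ are exactly the isomorphisms the paper uses, and the nontriviality checks via $Vv\equiv0$ and unique continuation coincide. Your write-up is in fact a bit more explicit than the paper's about the $L^2$-regularity of the reconstructed $v$ and about how the zero extension works in the unique continuation step.
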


\begin{proof}
If $\lambda$ is such that the interior transmission problem has a non-trivial solution $v,w\in L^2(\Omega)$, then it is a simple matter of computation to show that $u=v-w\in H_0^2(\Omega)$ solves the fourth-order equation. Furthermore, $u\not\equiv0$ because otherwise we would have $Vv\equiv0$, which is not possible.

Conversely, if the fourth-order equation has a non-trivial solution $u\in H_0^2(\Omega)$, then it is a routine calculation to check that the functions
\[v=-\frac1{\lambda^\nu\,V}\left(H_0-\lambda\right)u\quad\text{and}\quad w=-\frac1{\lambda^\nu\,V}\left(H_0+\lambda^\nu V-\lambda\right)u,\]
where $\lambda^\nu$ is understood as $1$ if $\nu=0$ and $\lambda=0$,
are in $L^2(\Omega)$ and solve the interior transmission problem. Furthermore, we have $v-w=u\in H_0^2(\Omega)$. Neither of $v$ and $w$ can vanish identically because $v-w \in H_0^2(\Omega)$ would then imply that $v \equiv w \equiv u \equiv 0$ by the unique continuation principle.

Finally, the above linear mappings between the spaces of solutions from $\left\langle v,w\right\rangle$ to $u=v-w$, and from $u$ to $\left\langle v,w\right\rangle$, are injections and inverses of each other, which yields the last statement of the proposition.
\end{proof}

We now move into the realm of quadratic forms and their analytic perturbation theory. First, we define the quadratic form $Q_\lambda\colon H^2_0(\Omega)\longrightarrow\mathbb C$, for every $\lambda\in\mathbb C$, by setting
\[Q_\lambda(u)=\left\langle\left(H_0+\overline{\lambda^\nu}V-\overline\lambda\right)u\middle|\frac1{\left|V\right|}\left(H_0-\lambda\right)u\right\rangle_{L^2(\Omega;\mathrm d\mu)}.\]
Here the operator on the left is the adjoint operator of $H_0+\lambda^\nu V-\lambda$ in the hyperbolic inner product $\langle f | g \rangle_{L^2(\Omega;\mathrm d\mu)} = \int_\Omega \overline{f}\,g\,\mathrm d\mu$ for $f, g \in L^2(\Omega;\mathrm d\mu)$.

In particular, for $\lambda\in\mathbb R$, we have
\[Q_\lambda(u)=\left\|\frac1{\sqrt{\left|V\right|}}\left(H_0-\lambda\right)u\right\|_{L^2(\Omega,\mathrm d\mu)}^2+\lambda^\nu\frac{V}{\left|V\right|}\left\langle u\middle|\left(H_0-\lambda\right)u\right\rangle_{L^2(\Omega;\mathrm d\mu)}\]
since $V/\left|V\right|$ is a constant as $V$ is real-valued and has a constant sign.
Following the presentations of \cite{Vesalainen1, Vesalainen2}, where Euclidean transmission eigenvalues were considered for suitable polynomially and exponentially decaying potentials in unbounded domains, almost verbatim, we may list the relevant facts about $Q_\lambda$ which follow from analytic perturbation theory \cite{Kato}. The types (a) and (B) are defined there in Chapter VII.
\begin{proposition}\label{quadratic-forms}
The quadratic forms $Q_\lambda$ form an entire self-adjoint analytic family of quadratic forms of type (a) with compact resolvent, and so give rise to an entire self-adjoint analytic family of operators $T_\lambda$ of type (B) with compact resolvent.

There exists a sequence of real-analytic functions $\mu_\ell\colon\mathbb R\longrightarrow\mathbb R$, $\ell\in\mathbb Z_+$, such that for each $\lambda\in\mathbb R$, the spectrum of $T_\lambda$, which consists of a discrete set of real eigenvalues of finite multiplicities and accumulating to $+\infty$, is given by the values $\mu_\ell(\lambda)$, $\ell\in\mathbb Z_+$, respecting multiplicities.

Finally, for any fixed $T\in\mathbb R_+$, there exist constants $c,C\in\mathbb R_+$ such that
\[\left\lvert\mu_\ell(\lambda)-\mu_\ell(0)\right\rvert\leqslant C\left(e^{c\left|\lambda\right|}-1\right),\]
uniformly for $\ell\in\mathbb Z_+$ and $\lambda\in\left[-T,T\right]$.
\end{proposition}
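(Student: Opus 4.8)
The plan is to verify the hypotheses of Kato's analytic perturbation theory \cite[Ch.~VII]{Kato} for the structural claims, invoke the accompanying Rellich-type theorem for the existence of the analytic branches, and close the quantitative estimate by a Feynman--Hellmann differentiation followed by a Gr\"onwall argument; the whole proof runs parallel to \cite{Vesalainen1, Vesalainen2}.

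\emph{Type (a) and (B), compact resolvent.} Fix $u\in H^2_0(\Omega)$. Expanding $Q_\lambda(u)$ and using that $V$ is real-valued, one sees that $\lambda\mapsto Q_\lambda(u)$ is a polynomial of degree $2+\nu$, hence entire --- this is exactly why the adjoint $H_0+\overline{\lambda^\nu}V-\overline\lambda$, rather than $H_0+\lambda^\nu V-\lambda$, sits in the left slot. The form domain $H^2_0(\Omega)$ is independent of $\lambda$. Since $1/\abs{V}$ is bounded and bounded below away from zero, the principal term $\norm{\abs{V}^{-1/2}(H_0-\lambda)u}_{L^2(\mathrm d\mu)}^2$ is comparable to $\norm{(H_0-\lambda)u}_{L^2(\mathrm d\mu)}^2$, which by interior elliptic estimates for the second-order elliptic operator $H_0$ dominates $\norm{u}_{H^2(\Omega)}^2$ up to a multiple of $\norm{u}_{L^2(\Omega)}^2$; the remaining summands of $Q_\lambda(u)$ are of order at most one in $u$ and are therefore form-bounded by the principal term with relative bound zero. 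Hence the $Q_\lambda$-form norm is equivalent to the $H^2_0(\Omega)$ norm, uniformly for $\lambda$ in bounded sets, so each $Q_\lambda$ is closed and sectorial; and, $\Omega$ being bounded, $H^2_0(\Omega)\hookrightarrow L^2(\Omega)$ is compact, so the associated operator has compact resolvent. For real $\lambda$ the identity displayed before the proposition shows $Q_\lambda(u)\in\R$ --- the first term is a squared norm, and $\langle u,(H_0-\lambda)u\rangle_{L^2(\mathrm d\mu)}$ is real because $H_0$ is formally self-adjoint with respect to $\mathrm d\mu$ and the integration-by-parts formula of Lemma~\ref{orthogonalityRelationLemma} leaves no boundary terms for $u\in H^2_0(\Omega)$ --- so $Q_\lambda$ is symmetric and bounded below there. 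Thus $(Q_\lambda)$ is an entire self-adjoint analytic family of type (a), and the form--operator correspondence yields the entire self-adjoint analytic family $(T_\lambda)$ of type (B) with compact resolvent.

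\emph{The spectrum and the branches.} For each real $\lambda$, $T_\lambda$ is self-adjoint, bounded below, with compact resolvent, so its spectrum is a discrete set of real eigenvalues of finite multiplicity accumulating only at $+\infty$. Applying the Rellich-type theorem for self-adjoint analytic families of type (B) with compact resolvent \cite[Ch.~VII]{Kato} over the connected parameter set $\R$ furnishes globally real-analytic eigenvalue functions $\mu_\ell\colon\R\longrightarrow\R$ together with real-analytic eigenprojections such that, counted with multiplicity, the values $\mu_\ell(\lambda)$ exhaust $\operatorname{spec}T_\lambda$.

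\emph{The exponential estimate.} Using the analytic eigenprojection, pick a real-analytic family of normalised eigenvectors $u_\ell(\lambda)$ with $T_\lambda u_\ell(\lambda)=\mu_\ell(\lambda)\,u_\ell(\lambda)$. By the Feynman--Hellmann formula for type (B) families, $\mu_\ell'(\lambda)=(\partial_\lambda Q_\lambda)(u_\ell(\lambda))$, the contributions of $\partial_\lambda u_\ell$ cancelling. Differentiating $Q_\lambda$ produces, apart from multiples of $\norm{u_\ell}_{L^2}^2=1$, inner products such as $\langle\abs{V}^{-1}(H_0-\lambda)u_\ell,u_\ell\rangle$ and the first-order form $\langle u_\ell,(H_0-\lambda)u_\ell\rangle$, which are not controlled by $\norm{u_\ell}_{L^2}$ alone. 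The device is to feed the weak eigenvalue identity $Q_\lambda(u_\ell,\phi)=\mu_\ell(\lambda)\langle u_\ell,\phi\rangle$ the test functions $\phi=(H_0-\lambda)^{-1}u_\ell$ and $\phi=(H_0+\lambda^\nu V-\lambda)^{-1}u_\ell$ --- legitimate for all but a discrete set of $\lambda$, the exceptional parameters being handled by continuity of $\mu_\ell$ --- which rewrites the offending terms in terms of $\mu_\ell(\lambda)$, the fixed $L^2(\Omega)$-resolvents of $H_0$ and of $H_0+\lambda^\nu V-\lambda$, and first-order forms that the principal term of $Q_\lambda$ absorbs with arbitrarily small relative bound. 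Together with the uniform lower bound $\mu_\ell(\lambda)\geqslant-C_0$ coming from the uniform sectoriality, this yields a differential inequality $\abs{\mu_\ell'(\lambda)}\leqslant c\,\mu_\ell(\lambda)+C$ on $[-T,T]$ with $c,C$ depending only on $n$, $\Omega$, $V$ and $T$, and Gr\"onwall's inequality applied to $\lambda\mapsto\mu_\ell(\lambda)$ then delivers a bound of the asserted form $\abs{\mu_\ell(\lambda)-\mu_\ell(0)}\leqslant C(e^{c\abs{\lambda}}-1)$ on $[-T,T]$.

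The main obstacle I expect is precisely this a priori estimate on $\mu_\ell'(\lambda)$: the crude bounds for $\partial_\lambda Q_\lambda(u_\ell)$ involve second derivatives of $u_\ell$ through $H_0 u_\ell$, and it is only by exploiting the eigenvalue equation with the resolvent test functions above that one can trade those away for quantities depending on $\ell$ solely through $\mu_\ell(\lambda)$, which is what keeps the Gr\"onwall constants independent of $\ell$. Everything else is the verification of Kato's hypotheses and the bookkeeping of his framework, carried out as in \cite{Vesalainen1, Vesalainen2}.
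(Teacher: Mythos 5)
Your proposal is correct and follows essentially the same route as the paper, which itself gives no standalone proof but defers to Kato's analytic perturbation theory and the presentations in \cite{Vesalainen1, Vesalainen2}: verification of type (a)/(B) with compact resolvent via the equivalence of the form norm with the $H^2_0(\Omega)$-norm, Rellich's theorem for self-adjoint analytic families with compact resolvent, and the growth bound obtained from a derivative estimate $\lvert\partial_\lambda Q_\lambda(u)\rvert\leqslant a\lVert u\rVert^2+b\,Q_\lambda(u)$ followed by a Gr\"onwall argument (Kato, Ch.~VII). Two harmless inaccuracies: $\lambda\mapsto Q_\lambda(u)$ has degree $2$ for both $\nu=0$ and $\nu=1$ (the $\overline{\lambda}$ in the left slot becomes $\lambda$ under the antilinear pairing), not $2+\nu$; and the resolvent test-function detour in the last step is unnecessary, since Cauchy--Schwarz plus absorption of the cross terms into the principal term of $Q_\lambda$ already yields the $\ell$-uniform differential inequality.
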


\begin{figure}
\begin{center}
\includegraphics{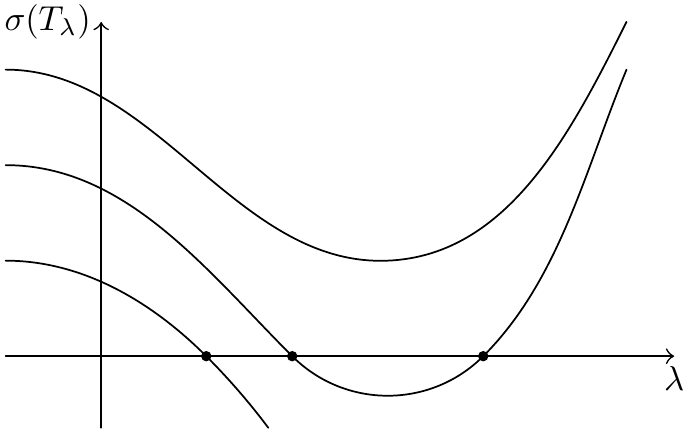}
\end{center}
\caption{The situation described in Propositions \ref{quadratic-forms} and \ref{connection-to-quadratic-forms}. There exists a family of real-analytic functions $\mu_\ell(\lambda)$ which give the eigenvalues of $T_\lambda$, respecting multiplicities. Transmission eigenvalues correspond to zeros of the functions $\mu_\ell(\lambda)$.}
\end{figure}
The connection to the fourth-order equation, and a fortiori to the interior transmission problem is established easily with the min-max principle:
\begin{proposition}\label{connection-to-quadratic-forms}
For each $\lambda\in\mathbb R$, the dimension of the space of solutions $u\in H^2_0(\Omega)$ to
\[\left(H_0+\lambda^\nu\,V-\lambda\right)\frac1V\left(H_0-\lambda\right)u=0\]
in $\Omega$ is equal to the number the pairs $\left\langle\ell,\lambda\right\rangle\in\mathbb Z_+\times\mathbb R$ for which $\mu_\ell(\lambda)=0$.
\end{proposition}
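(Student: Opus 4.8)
The plan is to identify the space of $H^2_0(\Omega)$-solutions of the fourth-order equation with the kernel of the operator $T_\lambda$ of Proposition~\ref{quadratic-forms}, and then to read off its dimension from the eigenvalue branches $\mu_\ell$ via the min-max principle. Everything rests on the standard dictionary between a closed semibounded sesquilinear form and its associated self-adjoint operator.

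\textbf{Step 1 (the solution space is $\ker T_\lambda$).} The form domain of $Q_\lambda$ is $H^2_0(\Omega)$, and since $T_\lambda$ is the type~(B) operator attached to $Q_\lambda$ one has $D(T_\lambda)\subseteq H^2_0(\Omega)$, with $T_\lambda u=0$ being equivalent to $Q_\lambda(u,v)=0$ for every $v\in H^2_0(\Omega)$, where $Q_\lambda(\cdot,\cdot)$ denotes the associated sesquilinear form. Writing $Q_\lambda(u,v)$ out and using that $H_0$, $V$ and $\lambda$ are real, it equals $\int_\Omega \bigl((H_0+\lambda^\nu V-\lambda)\bar v\bigr)\,\tfrac1{|V|}(H_0-\lambda)u\,\mathrm d\mu$. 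Since $V$ is bounded away from zero, $g:=\tfrac1{|V|}(H_0-\lambda)u\in L^2(\Omega;\mathrm d\mu)$ whenever $u\in H^2(\Omega)$; for $v\in C^\infty_{\mathrm c}(\Omega)$ the boundary terms in the integration-by-parts identity of Lemma~\ref{orthogonalityRelationLemma} vanish, so that integral is the distributional pairing of $(H_0+\lambda^\nu V-\lambda)g$ against $\bar v$. Hence $Q_\lambda(u,\cdot)$ vanishes on $C^\infty_{\mathrm c}(\Omega)$ exactly when $(H_0+\lambda^\nu V-\lambda)\tfrac1{|V|}(H_0-\lambda)u=0$ in the sense of distributions, and this is the same equation with $\tfrac1V$ in place of $\tfrac1{|V|}$ because $V$ has a constant sign. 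Finally $Q_\lambda$ is bounded on $H^2_0(\Omega)\times H^2_0(\Omega)$ (it is of type~(a)) and $C^\infty_{\mathrm c}(\Omega)$ is dense there, so $Q_\lambda(u,\cdot)$ vanishes on $C^\infty_{\mathrm c}(\Omega)$ iff it vanishes on all of $H^2_0(\Omega)$; in that case $v\mapsto Q_\lambda(u,v)$ is the zero functional, hence trivially $L^2$-bounded, so $u\in D(T_\lambda)$ with $T_\lambda u=0$. This gives, for $u\in H^2_0(\Omega)$, the equivalence of the fourth-order equation with $u\in\ker T_\lambda$, so the two spaces coincide.

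\textbf{Step 2 (counting).} By Proposition~\ref{quadratic-forms}, for each fixed $\lambda\in\mathbb R$ the values $\mu_\ell(\lambda)$, $\ell\in\mathbb Z_+$, are precisely the eigenvalues of $T_\lambda$ listed with multiplicities; equivalently, since $Q_\lambda$ is closed and semibounded and $T_\lambda$ has compact resolvent (so $\sigma_{\mathrm{ess}}(T_\lambda)=\emptyset$), the nondecreasing rearrangement of $(\mu_\ell(\lambda))_\ell$ is the min-max eigenvalue sequence of $T_\lambda$. In either description the value $0$ occurs among the $\mu_\ell(\lambda)$ exactly $\dim\ker T_\lambda$ times, i.e.\ $\#\{\ell\in\mathbb Z_+:\mu_\ell(\lambda)=0\}=\dim\ker T_\lambda$. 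Combined with Step~1 this gives $\dim\{u\in H^2_0(\Omega):(H_0+\lambda^\nu V-\lambda)\tfrac1V(H_0-\lambda)u=0\}=\#\{\ell\in\mathbb Z_+:\mu_\ell(\lambda)=0\}$, which for the fixed $\lambda$ is exactly the number of pairs $\langle\ell,\lambda\rangle$ asserted in the statement.

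\textbf{Main obstacle.} Step~2 is immediate from the material already in place, so the only point requiring care is Step~1: one must verify that $\ker T_\lambda$ is genuinely the \emph{distributional} solution space, even though $u\in H^2_0(\Omega)$ gives no control on $(H_0-\lambda)u$ beyond membership in $L^2$ and no pointwise sense for the fourth-order expression. This forces the bookkeeping above --- tracking the conjugates in the polarized form, transferring $H_0+\lambda^\nu V-\lambda$ onto the second factor via Lemma~\ref{orthogonalityRelationLemma} with vanishing boundary terms, passing between $\tfrac1{|V|}$ and $\tfrac1V$ using the constant sign of $V$, and using boundedness of $Q_\lambda$ together with density of $C^\infty_{\mathrm c}(\Omega)$ to move from test functions to all of $H^2_0(\Omega)$.
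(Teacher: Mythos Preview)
Your proof is correct and follows exactly the route the paper indicates: the paper states that the connection ``is established easily with the min-max principle'' and defers the argument to \cite{Vesalainen1}, and you have spelled out precisely this standard argument---identifying the solution space with $\ker T_\lambda$ via the form--operator correspondence and then reading off the multiplicity of the eigenvalue $0$ from the branches $\mu_\ell(\lambda)$. One minor remark: the integration-by-parts identity you invoke is really formula~\eqref{integration-by-parts-for-H0} inside the proof of Lemma~\ref{orthogonalityRelationLemma}, not the lemma itself, and the boundedness of $Q_\lambda$ on $H^2_0(\Omega)\times H^2_0(\Omega)$ is more directly seen from the explicit expression (each factor is in $L^2$ with norm controlled by $\|u\|_{H^2}$) than from being of type~(a); but neither point affects the validity of your argument.
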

In particular, $\lambda\in\mathbb R$ is a transmission eigenvalue if and only if $0$ is an eigenvalue of the fourth-order operator $T_\lambda$. Again the proof is the same as in~\cite{Vesalainen1}.

Our next goal is to prove the existence of Helmholtz transmission eigenvalues for constant potentials in discs of $\mathbb H^n$. This is a key step in establishing the existence of transmission eigenvalues for more general potentials via fourth-order operators and their quadratic forms in both of the cases $\nu=0$ and $\nu=1$. 

\begin{proposition}\label{constant-potentials}
Let $R\in\mathbb R_+$, and let $V_0\in\left]-\infty,1\right[$ be a constant potential
in an open ball $B \subset \mathbb H^n$ of radius~$R$. Then there exists an infinite
sequence of positive real numbers $\lambda$, tending to $+\infty$, such that
the system
\[\begin{cases}
\left(H_0+\lambda V_0-\lambda\right)v=0&\text{in $B$,}\\
\left(H_0-\lambda\right)w=0&\text{in $B$,}\\
v-w\in H^2_0(B)
\end{cases}\]
has a solution $v,w\in L^2(B)$ with $v\not\equiv0$ and $w\not\equiv0$. Furthermore, these $\lambda$ are Helmholtz non-scattering energies for the potential $V_0\,\chi_B$ in $\mathbb H^n$, and each of them is a Schr\"odinger non-scattering energy for the corresponding scaled potential $\lambda\,V_0\,\chi_B$.
\end{proposition}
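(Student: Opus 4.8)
The plan is to reduce the interior transmission problem on the ball $B$ to a one-dimensional, explicitly solvable problem by separation of variables, and then to read off the two non-scattering statements from the standard gluing construction. First I would place the centre of $B$ at the origin and pass to geodesic polar coordinates $(r,\omega)\in[0,R)\times S^{n-1}$, seeking spherically symmetric $v$ and $w$. Since $V_0<1$, the equation for $v$ becomes $(H_0-\lambda(1-V_0))v=0$, i.e.\ it is the free equation at the positive energy $\lambda(1-V_0)$. For each energy $E>0$ the radial equation $-w''-(n-1)\coth r\,w'-\bigl(\tfrac{(n-1)^2}{4}+E\bigr)w=0$ has, up to a scalar, a unique solution $\varphi_E$ smooth at $r=0$, the zonal spherical function; the Liouville--Green substitution $y=(\sinh r)^{(n-1)/2}w$ turns it into the one-dimensional Schr\"odinger equation $-y''+\tfrac{(n-1)(n-3)}{4\sinh^2 r}\,y=E\,y$ on $(0,R)$, with $\varphi_E$ corresponding to the solution regular at $0$. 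Because $B$ has smooth boundary and the hyperbolic $H^2_0(B)$ agrees with the Euclidean one, a radial function smooth up to $\partial B$ lies in $H^2_0(B)$ exactly when it and its radial derivative vanish at $r=R$. Hence the transmission system has a nontrivial radial solution $v=c_2\,\varphi_{\lambda(1-V_0)}$, $w=c_1\,\varphi_\lambda$ precisely when the Wronskian-type determinant
\[
  D(\lambda):=\varphi_\lambda(R)\,\varphi_{\lambda(1-V_0)}'(R)-\varphi_\lambda'(R)\,\varphi_{\lambda(1-V_0)}(R)
\]
vanishes, and then $c_1,c_2\neq0$ by ODE uniqueness, so $v,w\not\equiv0$.

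The core step is to show $D$ has infinitely many positive zeros tending to $+\infty$. The case $V_0=0$ is trivial (take $v=w=\varphi_\lambda$ for every $\lambda>0$), so assume $V_0\neq0$ and put $a=\sqrt{1-V_0}\neq1$, $\kappa=\sqrt\lambda$. From the Schr\"odinger form one has, uniformly for $r$ in compact subsets of $(0,\infty)$, the large-energy asymptotics $\varphi_E(r)=A(E)\,(\sinh r)^{-(n-1)/2}\bigl(\sin(\sqrt E\,r+\theta(E))+O(E^{-1/2})\bigr)$ with $A(E)>0$ and $\theta(E)$ convergent as $E\to\infty$, and the corresponding expansion for $\varphi_E'$ with an extra factor $\sqrt E$ multiplying a cosine. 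Substituting at $r=R$ for the energies $\lambda$ and $\lambda(1-V_0)$ shows that $D(\lambda)$ equals a strictly positive factor times
\[
  \sqrt\lambda\Bigl(\tfrac{a+1}{2}\sin\bigl((1-a)R\kappa+\phi_-(\lambda)\bigr)+\tfrac{a-1}{2}\sin\bigl((1+a)R\kappa+\phi_+(\lambda)\bigr)+O(\kappa^{-1})\Bigr),
\]
with $\phi_\pm$ convergent. Up to a perturbation that tends to $0$, the bracketed main term is a nonzero Bohr almost-periodic function of $\kappa$ with two distinct nonzero frequencies and mean zero; such a function has strictly positive $\limsup$ and strictly negative $\liminf$ as $\kappa\to+\infty$, and neither the $O(\kappa^{-1})$ remainder nor the convergence of $\phi_\pm$ spoils this. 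Hence $D$ takes both signs on arbitrarily large arguments and, being continuous, has infinitely many zeros $\lambda_1<\lambda_2<\cdots\to+\infty$.

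For the final statement, fix such a $\lambda$, normalise $w=\varphi_\lambda$, and let $v$ be the corresponding radial solution on $B$. Let $\tilde w:=\varphi_\lambda$ on all of $\mathbb H^n$; transferring the constant boundary datum on the sphere at infinity to $\R^{n-1}$ exhibits $\tilde w$ as a Poisson extension $\mathscr F_0^\pm(\sqrt\lambda)^\ast\varphi$ with $\varphi\in L^2(\R^{n-1})$ (concretely $\varphi(y')$ is a constant multiple of $(1+|y'|^2)^{-(n-1)/2-i\sqrt\lambda}$, which is in $L^2$ because $S^{n-1}$ has finite measure), so $\tilde w\in B^\ast(\mathbb H^n)$ and $\tilde w\not\equiv0$. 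Define $\tilde v:=\tilde w+(v-w)^0$, where $(v-w)^0\in H^2(\mathbb H^n)$ is the extension of $v-w\in H^2_0(B)$ by zero; then $\tilde v$ equals $v$ on $B$ and $\tilde w$ outside $\overline B$, no jump in Cauchy data appears across $\partial B$, and a short computation gives $(H_0+\lambda V_0\chi_B-\lambda)\tilde v=0$ in $\mathbb H^n$ with $\tilde v\not\equiv0$. Since $\tilde v-\tilde w=(v-w)^0$ is compactly supported it lies in $\mathring B^\ast(\mathbb H^n)$, so by Definition~\ref{nonScatteringEnergyHn} the number $\lambda$ is a Helmholtz non-scattering energy for $V_0\chi_B$; and the same pair $(\tilde v,\tilde w)$ also solves $(H_0+U-\lambda)\tilde v=0$, $(H_0-\lambda)\tilde w=0$ with $U=(\lambda V_0)\chi_B$, hence $\lambda$ is at the same time a Schr\"odinger non-scattering energy for $\lambda V_0\chi_B$.

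The main obstacle is establishing the large-energy asymptotics of the hyperbolic zonal spherical functions $\varphi_E$ at $r=R$ with enough uniformity and with control of the phase $\theta(E)$, and then converting the resulting two-frequency almost-periodic main term into a clean statement about sign changes; the rest is routine separation of variables, elliptic regularity, and the standard gluing argument.
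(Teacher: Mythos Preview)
Your plan is correct and follows the same overall strategy as the paper: reduce to radial solutions, obtain a $2\times2$ determinant condition from the two boundary constraints at $r=R$, use large-$\lambda$ asymptotics to exhibit infinitely many sign changes, and then glue to obtain global non-scattering solutions. The paper's execution differs in the specific tools at each step. Instead of the Liouville--Green substitution and WKB asymptotics for the zonal spherical function, the paper changes to the variable $\rho=(\cosh r-1)/2$, recognises the radial equations as hypergeometric, and reads off the large-parameter asymptotics of $F\bigl(\tfrac{n-1}{2}-i\sqrt\lambda,\tfrac{n-1}{2}+i\sqrt\lambda,\tfrac n2;-\rho\bigr)$ directly from standard references; this makes your phase $\theta(E)$ explicit (it is $-(n-1)\pi/4$) and yields a concrete two-term trigonometric main term for the determinant. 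For the sign changes, the paper simply observes that the term with coefficient $1+\sqrt{1-V_0}$ strictly dominates the other at its extrema, which is a shade more elementary than your Bohr almost-periodic argument but equivalent in effect. For the $B^\ast$-membership of the extended $w$, the paper does \emph{not} invoke the Poisson representation; instead it uses the known large-$\rho$ decay $|w(\rho)|^2\lesssim\rho^{1-n}$ of the hypergeometric function and verifies the $B^\ast(\mathbb H^n)$ condition by direct integration in the half-space coordinates. Your route via the Poisson kernel is correct in principle, but the identification of the boundary datum and its $L^2(\mathbb R^{n-1})$-membership would need to be written out carefully---the parenthetical about $S^{n-1}$ having finite measure is the right intuition (constant boundary data in the ball model) but is not itself a proof in the half-space formulation used in the paper. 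Each approach has its merits: the hypergeometric route is fully explicit and self-contained via tables, while yours is more structural and would transfer more readily to other rank-one symmetric spaces.
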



\begin{proof}
We shall follow the approach of the proof of Theorem 2 in
\cite{Colton--Paivarinta--Sylvester} and look for radial solutions to the
system. Instead of the hyperbolic polar coordinates $\left\langle r,\vartheta\right\rangle$ originating 
from the center of $B$, we shall use the 
coordinates $\left\langle\rho,\vartheta\right\rangle$ with $\rho$ related to $r$ by $\cosh r=2\rho+1$ and where
$\vartheta\in S^{n-1}$ represents the angular variables. We can also translate $B$ so that its center is the point $\langle 0,\ldots,0,1 \rangle$ in the $\langle x', x_n \rangle$ coordinates of the upper half-space model of $\mathbb H^n$.

We need a formula for $H_0$ applied to a radially symmetric function. If $x, y \in \mathbb H^n$ then the hyperbolic distance between $x$ and $y$ has the pleasant expression $ \operatorname{arcosh}(1+\abs{x-y}^2/(2x_ny_n))$. Hence the radial distance of $x$ to the center of $B$ is given by
\[
    r = \operatorname{arcosh}\left( 1 + \frac{\abs{x'}^2+\abs{x_n-1}^2}{2x_n} \right),
\]
and hence
\[
    \rho = \frac{\abs{x'}^2+x_n^2-2x_n+1}{4x_n}.
\]
Also, we see that for $j\in\left\{1,2,\ldots,n-1\right\}$,
\begin{align*}
    &\frac{\partial\rho}{\partial x_j} = \frac{x_j}{2x_n},
    && \frac{\partial\rho}{\partial x_n} = \frac{x_n^2-\abs{x'}^2-1,}{4x_n^2},\\
&\frac{\partial^2 \rho}{\partial x_j^2} = \frac{1}{2x_n},
&& \frac{\partial^2 \rho}{\partial x_n^2} = \frac{\abs{x'}^2+1}{2x_n^3}.
\end{align*}
Let us consider a sufficiently smooth function $f$ defined in a non-empty open subset of $\mathbb H^n$, and let us assume that $f$ is radially symmetric, which means that it is constant on the sphere of the form $\left\{\rho=c\right\}$ for any fixed $c\in\mathbb R_+$. Hence
\[\frac{\partial f}{\partial x_j} = \frac{\partial f}{\partial\rho} \cdot\frac{\partial \rho}{\partial x_j}\qquad
\text{and}\qquad
\frac{\partial^2 f}{\partial x_j^2} = \frac{\partial^2 f}{\partial\rho^2}\cdot \left(\frac{\partial\rho}{\partial {x_j}}\right)^{\!2} + \frac{\partial f}{\partial\rho} \cdot\frac{\partial^2 \rho}{\partial x_j^2}.\]

The change of coordinates of the previous paragraph applied to \eqref{H0def} gives a formula which simplifies to
\[
    H_0=-\rho\left(\rho+1\right)\frac{\partial^2}{\partial \rho^2}-\left(n\rho+\frac
    n2\right)\frac\partial{\partial\rho}-\frac{(n-1)^2}4.
\]
In the above coordinates $B$ has radius $\mathrm P=(\cosh R-1)/2$. A
radial solution $v=v(\rho)$, $w=w(\rho)$ to our system must now solve the ordinary
differential equations
\[
-\rho\left(\rho+1\right)\frac{\partial^2v}{\partial\rho^2}-\left(n\rho+\frac
n2\right)\frac{\partial v}{\partial\rho}-\frac{(n-1)^2v}4+\lambda V_0v-\lambda
v=0,
\]
and
\[
-\rho\left(\rho+1\right)\frac{\partial^2w}{\partial\rho^2}-\left(n\rho+\frac
n2\right)\frac{\partial w}{\partial\rho}-\frac{(n-1)^2w}4-\lambda w=0.
\]
In terms of the variable $z=-\rho$, this simplifies into the hypergeometric equation
\[
    z(1-z)\frac{\p^2 f}{\p z^2} + (c-(a+b+1)z)\frac{\p f}{\p z} -ab f = 0,
\]
with certain constants $a$, $b$ and $c$ (see Section 15.10 in \cite{NIST}). 
The solutions to this equation can be given by means of the hypergeometric functions
$F(a,b,c;z)=\vphantom F_2F_1\!(a,b,c;z)$. In particular, we have the solutions
\[
    v(\rho)=A\cdot F\!\left(\frac{n-1}2-i\sqrt{\lambda-\lambda
    V_0},\frac{n-1}2+i\sqrt{\lambda-\lambda V_0},\frac n2;-\rho\right),
\]
and
\[
w(\rho)=B\cdot F\!\left(\frac{n-1}2-i\sqrt{\lambda},\frac{n-1}2+i\sqrt{\lambda},\frac
n2;-\rho\right).
\]
for some real constant coefficients $A$ and $B$. These functions are real-valued. Furthermore, since $v$ and $w$ are bounded in the neighbourhood of the center $\gamma$ of the ball $B$, and since the original equations have smooth coefficients, $v$ and $w$ solve their respective equations also in the neighbourhood of $\gamma$ (see e.g.\ \cite{Serrin}).

We would like to pick nonzero constants $A$ and $B$ so that the required boundary conditions
\[v(\mathrm P)=w(\mathrm P)\quad\text{and}\quad v'(\mathrm P)=w'(\mathrm P)\]
hold. The derivatives $v'(\mathrm P)$ and $w'(\mathrm P)$ are given by
\begin{multline*}
v'(\mathrm P)=-A\cdot\frac{\left((n-1)/2\right)^2+\lambda-\lambda
V_0}{n/2}\\\cdot F\!\left(\frac{n+1}2-i\sqrt{\lambda-\lambda
V_0},\frac{n+1}2+i\sqrt{\lambda-\lambda V_0},\frac{n+2}2;-\mathrm P\right),
\end{multline*}
and
\begin{multline*}
w'(\mathrm P)=-B\cdot\frac{\left((n-1)/2\right)^2+\lambda}{n/2}\cdot F\!\left(\frac{n+1}2-i\sqrt{\lambda},\frac{n+1}2+i\sqrt{\lambda},\frac{n+2}2;-\mathrm P\right),
\end{multline*}
(see e.g. Section 15.5 in \cite{NIST}).
Thus, $A$ and $B$ need to solve the homogeneous pair of linear equations
\[\begin{cases}
A\cdot F(\ldots)-B\cdot F(\ldots)=0,\\
A\,\bigl(\left((n-1)/2\right)^2+\lambda-\lambda V_0\bigr)\, F(\ldots)
-B\,\bigl(\left((n-1)/2\right)^2+\lambda\bigr)\, F(\ldots)=0.
\end{cases}\]
We will not choose any values of $\lambda$, $A$ or $B$ explicitly. Rather, we
shall show that as $\lambda\longrightarrow\infty$, the determinant of the above
system will have infinitely many zeros. The implicit constants in the $O$-terms below may depend on $n$, $R$ and $V_0$, but not on $\lambda$.

Using the asymptotics from sections 15.12(iii) and 10.17(i) of \cite{NIST}, we obtain rather pleasant asymptotics in terms of $\lambda\longrightarrow\infty$, for we have
\begin{align*}
&F\!\left(\frac{n-1}2-i\sqrt\lambda,\frac{n-1}2+i\sqrt\lambda,\frac
n2;-\mathrm P\right)\\
&\qquad=C_{n,R}\,\lambda^{1/4-n/4}\,\cos\!\left(R\sqrt\lambda-\frac{\left(n-1\right)\pi}4\right)+O(\lambda^{-1/4-n/4}),
\end{align*}
as $\lambda\longrightarrow\infty$, where $C_{n,R}$ is a nonzero
real coefficient only depending on $n$ and $R$. More precisely, we have used here the asymptotics (15.12.5) from \cite{NIST}, rewritten the $I$-Bessel function in terms of the $J$-Bessel function using (10.27.6) of \cite{NIST}, and then applied the asymptotics of the $J$-Bessel function given by (10.17.3) of \cite{NIST}.
Similarly, we may derive asymptotics for the
expressions involving the derivatives:
\begin{align*}
&\left(\frac{(n-1)^2}4+\lambda\right)F\!\left(\frac{n+1}2-i\sqrt\lambda,\frac{n+1}2+i\sqrt\lambda,\frac
{n+2}2;-\mathrm P\right)\\
&\quad=\widetilde
C_{n,R}\,\lambda^{3/4-n/4}\,\cos\!\left(R\sqrt\lambda-\frac{\left(n+1\right)\pi}4\right)+O(\lambda^{1/4-n/4}),
\end{align*}
as $\lambda\longrightarrow\infty$, where $\widetilde C_{n,R}$ is again a nonzero real coefficient only depending on $n$ and $R$.

Combining the above facts, we see, after some simplifications, that the asymptotics for the determinant of the pair of equations is
\begin{align*}
\det(\lambda)
&=D_{n,R,V_0}\,\lambda^{1-n/2}\,M(\lambda)+O(\lambda^{1/2-n/2}),
\end{align*}
as $\lambda\longrightarrow\infty$, where
\begin{align*}
M(\lambda)&=\left(1-\sqrt{1-V_0}\right)\cos\!\left(R(\sqrt\lambda+\sqrt{\lambda-\lambda V_0})-\frac{n\pi}2\right)\\
&\qquad+\left(1+\sqrt{1-V_0}\right)\sin\!\left(R(\sqrt\lambda-\sqrt{\lambda-\lambda V_0})\right),
\end{align*}
where $D_{n,R,V_0}$ is a nonzero real constant only depending on $n$, $R$ and $V_0$. Now the existence claim follows from the fact that the second term clearly dominates the first, when its absolute value reaches local maxima, thereby giving rise to infinitely many sign changes as $\lambda\longrightarrow\infty$.

It only remains to check that these values $\lambda$ obtained are in fact non-scattering energies. This is done by observing that $w$ clearly defines a real-analytic function in all of $\mathbb H^n$, and $v$ can be extended as $w$ to a twice weakly differentiable function in all of $\mathbb H^n$. It is clear that as $v-w$ is compactly supported, it is in $\mathring B^\ast(\mathbb H^n)$. Thus, we only need to show that $v,w\in\mathbb B^\ast(\mathbb H^n)$. However, since we are only concerned with the asymptotic behaviour far away, it is enough to check that $w\in\mathbb B^\ast(\mathbb H^n)$.

Without loss of generality, we may assume that the center of the ball $B$ is the point $O=\left\langle0,\ldots,0,1\right\rangle$ in the usual $\left\langle x',x_n\right\rangle$ coordinates of the upper half-space model of $\mathbb H^n$. Then the properties of hypergeometric functions, namely Paragraph \S15.2(i) and Equation (15.8.2) of Paragraph \S15.8(i) in \cite{NIST}, give the estimate
\[\left|w(\rho)\right|^2\leqslant C\,\rho^{1-n},\]
as $\rho\longrightarrow\infty$,
which is $\leqslant C\,e^{(1-n)r}$, where $r$ is the hyperbolic distance from the point $O$. Since
\[\cosh r=\frac{\left|x'\right|^2+x_n^2+1}{2x_n},\]
as $\rho\longrightarrow\infty$ (see e.g.\ Section\ 1.1 in \cite{Isozaki--Kurylev}), we may estimate
\[\left|w(x)\right|^2\leqslant C\left(\left|x'\right|^2+x_n^2+1\right)^{1-n}x_n^{n-1}.\]

Now, let $R\in\left]e,\infty\right[$. The expression in the $B^\ast$-norm of $w$ is now
\[\frac1{\log R}\int\limits_{1/R}^R\int\limits_{\mathbb R^{n-1}}\left|w\right|^2\mathrm dx'\,\frac{\mathrm dx_n}{x_n^n}
\leqslant\frac C{\log R}\int\limits_{1/R}^R\int\limits_{\mathbb R^{n-1}}\left(\left|x'\right|^2+x_n^2+1\right)^{1-n}\mathrm dx'\,\frac{\mathrm dx_n}{x_n}.\]
We split the $x'$-integral into two parts according to whether $\left|x'\right|\geqslant\sqrt{x_n^2+1}$ or not, and we may continue
\begin{align*}
&\leqslant\frac C{\log R}\int\limits_{1/R}^R\left(\,\int\limits_{\left|x'\right|\geqslant\sqrt{x_n^2+1}}\left|x'\right|^{2-2n}\mathrm dx'
+\int\limits_{\left|x'\right|\leqslant\sqrt{x_n^2+1}}\left(x_n^2+1\right)^{1-n}\mathrm dx'\right)\frac{\mathrm dx_n}{x_n}\\
&\leqslant\frac C{\log R}\int\limits_{1/R}^R\left(x_n^2+1\right)^{(1-n)/2}\,\frac{\mathrm dx_n}{x_n}
\leqslant\frac C{\log R}\int\limits_{1/R}^1\frac{\mathrm dx_n}{x_n}
+\frac C{\log R}\int\limits_1^Rx_n^{-n}\,\mathrm dx_n\leqslant C,
\end{align*}
where $C$ only depends on $w$ and $n$, but not on $R$, and so we have proved that $\left\|w\right\|_{B^\ast(\mathbb H^n)}<\infty$, as required.
\end{proof}

\begin{proof}[Proof of Theorem~\ref{schrodinger-transmission-eigenvalues}.]
It is easy to check that for real potentials $V$, the transmission eigenvalues are indeed real, as the transmission eigenvalue system implies $\Im\lambda\,\left\|v\right\|_{L^2(\Omega;\mathrm d\mu)}^2=\Im\lambda\,\left\|w\right\|_{L^2(\Omega;\mathrm d\mu)}^2=0$. Recall that $\nu=0$ in the Schr\"odinger case and hence we will consider the quadratic form
\[Q_\lambda(u)=\left\|\frac1{\sqrt{V}}\left(H_0-\lambda\right)u\right\|_{L^2(\Omega;\mathrm d\mu)}^2+\left\langle u\middle|\left(H_0-\lambda\right)u\right\rangle_{L^2(\Omega;\mathrm d\mu)},\]
for $u\in H^2_0(\Omega)$ and $\lambda\in\mathbb R$.
It is also easy to observe that $Q_\lambda(u)\geqslant-\lambda\left\|u\right\|_{L^2(\Omega;\mathrm d\mu)}^2$ for all $u\in H^2_0(\Omega)$ and $\lambda\in\mathbb R$. Thus, for $\lambda\in\mathbb R_-$, the eigenvalues of $T_\lambda$ from Proposition~\ref{quadratic-forms} are all positive, and the eigenvalues of $T_0$ are nonnegative. Furthermore, if $0$ was an eigenvalue of $T_0$, then we would have $Q_0(u)=0$ for a corresponding eigenvector $u\in\mathrm{Dom}\,T_0\subseteq H_0^2(\Omega)$. But then $H_0u=0$ in $\Omega$ since both terms of $Q_0(u)$ have to vanish by their non-negativity. Hence by the $H_0^2(\Omega)$-condition the zero extension $\widetilde u$ of $u$ into all of $\mathbb H^n$ is a solution to $H_0\widetilde u=0$ in $\mathbb H^n$, and by real-analyticity $\widetilde u\equiv0$ in all of $\mathbb H^n$, which gives a contradiction. Thus, all eigenvalues of $T_0$ are positive as well.

\begin{figure}
\begin{center}
\includegraphics{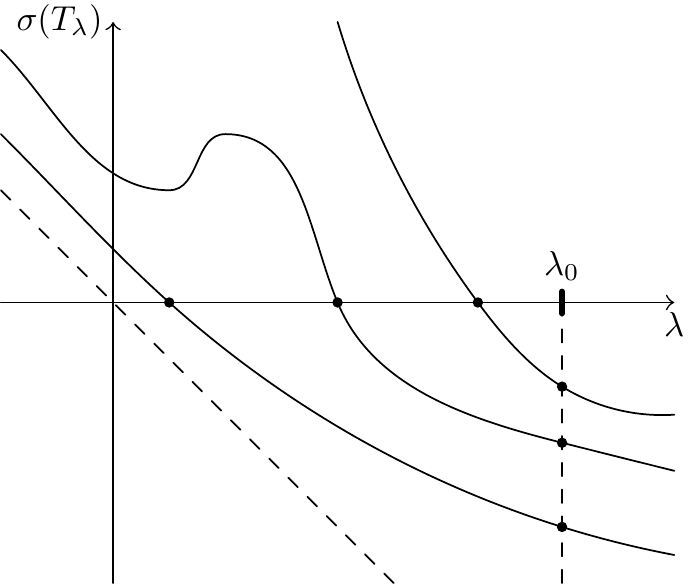}
\end{center}
\caption{The situation in the proof of Theorem 8. The spectrum of $T_\lambda$ is contained in $\left[-\lambda,\infty\right[$, and in particular all the values $\mu_\ell(\lambda)$ are strictly positive for $\lambda\in\left]-\infty,0\right]$. The key idea in the proof of existence is to find a number $\lambda_0\in\mathbb R_+$ such that there is a subspace $X$ of dimension $N$ such that $Q_\lambda$ is negative-semidefinite in $X$. This implies that at least $N$ of the values $\mu_\ell(\lambda_0)$ are non-positive and so the corresponding functions must intersect the $\lambda$-axis at least $N$ times, and so we obtain at least $N$ transmission eigenvalues.}
\end{figure}

The discreteness follows directly from Propositions \ref{quadratic-forms} and \ref{connection-to-quadratic-forms}: for any $T\in\mathbb R_+$ only finitely many of the eigenvalues $\mu_\ell(\lambda)$, $\ell\in\mathbb Z_+$, can vanish for some $\lambda\in\left[-T,T\right]$. This is because they are positive at $\lambda=0$, tend to $+\infty$ as $\ell\longrightarrow\infty$ and because $\lvert\mu_\ell(\lambda)-\mu_\ell(0)\rvert\ll_T e^{c\lvert\lambda\rvert}-1$. Furthermore, because of real-analyticity, for each $\ell$, there are at most finitely many zeros in $\left[-T,T\right]$.

To prove the existence statement let us emphasize that in any ball and despite being written for the Helmholtz operator, Proposition~\ref{constant-potentials} guarantees the existence of pairs $\left\langle\lambda_0, V_0\right\rangle \in \mathbb R_+ \times \mathbb R_+$ such that $\lambda_0$ is a transmission eigenvalue for the Schr\"odinger operator $H_0 + V_0 - \lambda$ in that ball. Suppose that $\Omega$ contains congruent pairwise disjoint balls $B_1$, $B_2$, \dots, $B_N$ (with $N\in\mathbb Z_+$). Let $\lambda_0\in\mathbb R_+$ be a transmission eigenvalue for the constant potential $V_0$ in each of the balls. Let us define an auxiliary quadratic form
\[\widetilde Q_\lambda(u)=\left\|\frac1{\sqrt{V_0}}\left(H_0-\lambda\right)u\right\|^2_{L^2(\Omega;\mathrm d\mu)}+\left\langle u\middle|\left(H_0-\lambda\right)u\right\rangle_{L^2(\Omega;\mathrm d\mu)}\]
for $\lambda\in\mathbb R$ and $u\in H_0^2(\Omega)$.

Since $\lambda_0$ is a transmission eigenvalue for the constant potential $V_0$ in the balls, we have functions $u_1,\ldots,u_N\in H_0^2(\Omega)$, supported in the balls $\overline B_1$, \dots, $\overline{B_N}$, respectively, such that
$\widetilde Q_{\lambda_0}(u)=0$ for $u\in X=\mathrm{span}\left\{u_1,\ldots,u_N\right\}$. Now, if $V\geqslant V_0$ in each of the balls, it is easy to check that
\[Q_{\lambda_0}(u)\leqslant\widetilde Q_{\lambda_0}(u)=0\]
for $u$ in the $N$-dimensional space $X$. This means that at least $N$ of the eigenvalues $\mu_\ell(\lambda)$, $\ell\in\mathbb Z_+$, must vanish for some $\lambda\in\left]0,\lambda_0\right]$, thereby establishing the existence of at least $N$ transmission eigenvalues, counting multiplicities.
\end{proof}

\begin{proof}[Proof of Theorem~\ref{helmholtz-transmission-eigenvalues}.]
In the Helmholtz case the quadratic form becomes
\[Q_\lambda(u)=\left\|\frac1{\sqrt{\left|V\right|}}\left(H_0-\lambda\right)u\right\|_{L^2(\Omega,\mathrm d\mu)}^2+\lambda\frac{V}{\left|V\right|}\left\langle u\middle|\left(H_0-\lambda\right)u\right\rangle_{L^2(\Omega;\mathrm d\mu)}\]
for $u\in H^2_0(\Omega)$ and $\lambda\in\mathbb R$,
where $V/\left|V\right|$ is the constant $+1$ or $-1$ depending on the fixed sign of $V$. 
Discreteness is obtained in essentially the same way as in the proof of Theorem~\ref{schrodinger-transmission-eigenvalues}: we observe that $Q_0(u)\geqslant0$ for all $u\in H_0^2(\Omega)$, and if $0$ was an eigenvalue of $T_0$, then an eigenfunction $u\in\mathrm{Dom}\,T_0\subseteq H_0^2(\Omega)$ would solve $H_0u=0$ in $\Omega$, leading to a contradiction as before. Thus all the eigenvalues of $T_0$ are strictly positive positive. Now the result follows from Propositions \ref{quadratic-forms} and \ref{connection-to-quadratic-forms}, since for any $T\in\mathbb R_+$ at most finitely many eigenvalues $\mu_\ell(\lambda)$, $\ell\in\mathbb Z_+$, can vanish for some $\lambda\in\left[-T,T\right]$, and each $\mu_\ell$ has at most finitely many zeros.

The existence claim is also established in the same manner as in the proof of Theorem~\ref{schrodinger-transmission-eigenvalues}. We pick an arbitrarily large $N\in\mathbb Z_+$ and a radius $r\in\mathbb R_+$ so small that the domain $\Omega$ contains $N$ pairwise disjoint balls $B_1$, \dots, $B_N$ of radius $r$. Let $V_0\in\left]-\infty,1\right[\setminus\left\{0\right\}$ be such that $\left|V_0\right|\leqslant\left|V\right|$ and $V_0/\left|V_0\right|=V/\left|V\right|$ in $B_1\cup\ldots\cup B_N$. We define the auxiliary quadratic form
\[\widetilde Q_\lambda(u)=\left\|\frac1{\sqrt{\left|V_0\right|}}\left(H_0-\lambda\right)u\right\|^2_{L^2(\Omega;\mathrm d\mu)}+\lambda\,\frac{V_0}{\left|V_0\right|}\left\langle u\middle|\left(H_0-\lambda\right)u\right\rangle_{L^2(\Omega;\mathrm d\mu)}\]
for $u\in H_0^2(\Omega)$ and $\lambda\in\mathbb R$.

Now, by Proposition~\ref{constant-potentials} there exists $\lambda_0\in\mathbb R_+$ that is a transmission eigenvalue for the constant potential $V_0$ in each of the balls. Again we get an $N$-dimensional subspace $X\subset H_0^2(\Omega)$ such that $\widetilde Q_{\lambda_0}(u)=0$ for $u\in X$, and clearly $Q_{\lambda_0}(u)\leqslant\widetilde Q_{\lambda_0}(u)=0$ for $u\in X$, therefore establishing the existence of $N$ transmission eigenvalues in $\left]0,\lambda_0\right]$, counting multiplicities.

Finally, for negatively valued $V$, the simple observation that $Q_\lambda(u)>0$ for $u\in H_0^2(\Omega)\setminus\left\{0\right\}$ and $\lambda\in\mathbb R_-$ shows immediately that there are no negative transmission eigenvalues.
\end{proof}

\section*{Acknowledgements}

The authors would like to express their gratitude to Valter Pohjola for many useful discussions during the research of this paper. The second author would also like to thank Jouni Parkkonen for conversations regarding analysis in hyperbolic spaces.

The first author was funded in part by the European Research Council's 2010 Advanced Grant 267700.

The second author was funded by the Academy of Finland through the Finnish Centre of Excellence in Inverse Problems Research and the projects 276031, 282938, 283262 and 303820, by the Magnus Ehrnrooth Foundation, and by the Basque Government through the BERC 2014--2017 program and by Spanish Ministry of Economy and Competitiveness MI\-NE\-CO: BCAM Severo Ochoa excellence accreditation SEV-2013-0323.

The authors would like to express their gratitude for the generous financial support of the Henri Poincar\'e Institute, where part of this research was carried out during the thematic program of inverse problems in 2015.

\footnotesize

\end{document}